
\documentclass[11pt, a4paper]{amsart}

\usepackage{amsfonts,amsmath,amssymb, amscd,fullpage}
\usepackage[all]{xy}

\newtheorem{theorem}{Theorem}[section]
\newtheorem{lemma}[theorem]{Lemma}
\newtheorem{definition}[theorem]{Definition}
\newtheorem{proposition}[theorem]{Proposition}
\newtheorem{corollary}[theorem]{Corollary}

\theoremstyle{definition}
\newtheorem{rem}[theorem]{Remark}
\newtheorem{rems}[theorem]{Remarks}

\newcommand\pf{\begin{proof}}
\newcommand\epf{\end{proof}}

\renewcommand\H{\mathrm{H}}
\newcommand\B{\mathrm{B}}
\newcommand\ab{\mathrm{ab}}
\newcommand\alg{\mathrm{alg}}
\newcommand\Irr{\mathrm{Irr}}
\newcommand\eps{\varepsilon}
\newcommand\SW{\mathrm{Sw}}

\newcommand\ZZ{\mathbb{Z}}

\newcommand\quot{/ \!\! /}

\newcommand\Alg{\operatorname{Alg}}
\newcommand\Coalg{\operatorname{Coalg}}
\newcommand\Hopf{\operatorname{Hopf}}

\DeclareMathOperator{\Hom}{Hom}
\DeclareMathOperator{\Ext}{Ext}
\DeclareMathOperator{\Reg}{Reg}
\DeclareMathOperator{\reg}{reg}

\DeclareMathOperator{\Ker}{Ker}
\DeclareMathOperator{\HKer}{HKer}
\DeclareMathOperator{\id}{id}
\DeclareMathOperator{\Coker}{Coker}

\renewcommand{\Im}{\image}
\DeclareMathOperator{\image}{Im}

\numberwithin{equation}{section}

\hyphenation{comod-ule}

\title{The lazy homology of a Hopf algebra}

\author{Julien Bichon}
\address{Julien Bichon:
Laboratoire de Math\'ematiques,
Universit\'e Blaise Pascal,
Complexe universitaire des C\'ezeaux,
63177~Aubi\`ere Cedex, France}
\email{Julien.Bichon@math.univ-bpclermont.fr}

\author{Christian Kassel}
\address{Christian Kassel: 
Universit\'e de Strasbourg,
Institut de Recherche Math\'e\-ma\-tique Avanc\'ee,
CNRS - Universit\'e Louis Pasteur,
7 rue Ren\'{e} Descartes, 67084 Strasbourg, France}
\email{kassel@math.u-strasbg.fr}

\begin{document}

\begin{abstract}
To any Hopf algebra~$H$ we associate two
commutative Hopf algebras~$\H^{\ell}_1(H)$ and $\H^{\ell}_2(H)$, 
which we call the lazy homology Hopf algebras of~$H$.
These Hopf algebras are built in such a way that
they are ``predual" to the lazy cohomology groups based on 
the so-called lazy cocycles.
We establish two universal coefficient theorems relating the lazy cohomology
to the lazy homology and
we compute the lazy homology of the Sweedler algebra.
\end{abstract}

\maketitle

\noindent
{\sc Key Words:}
Hopf algebra, lazy cocycle, (co)homology

\medskip
\noindent
{\sc Mathematics Subject Classification (2000):}
16W30, 
16E40, 
16S34, 
18G30, 
18G60, 
20J06, 
81R50 

\hspace{3cm}

\section*{Introduction}

One way to construct Galois objects over a Hopf algebra~$H$ is to twist the 
multiplication of~$H$ with the help of a~$2$-cocycle. 
The Galois objects obtained in this way are called cleft Galois objects.
If $H$ is cocommutative, then the $2$-cocycles form a group under the convolution product;
quotienting this group by an appropriate subgroup of coboundaries, one obtains
a cohomology group, which had been constructed by Sweedler in~\cite{sw}.
This cohomology group is in bijection with the set of isomorphism classes of
cleft Galois objects.
When $H$ is no longer cocommutative, then
the convolution product of $2$-cocycles is not necessarily a $2$-cocycle
and the classification of cleft Galois objects is no longer given by a cohomology group
(to get around this difficulty, Aljadeff and the second-named author recently
introduced the concept of a generic $2$-cocycle in~\cite{ak}).

Now there are $2$-cocycles that behave well with respect to the convolution
product, namely the so-called \emph{lazy} $2$-cocycles,
which are the cocycles that commute with the product of~$H$;
it was observed by Chen~\cite{ch} that 
the convolution product of lazy $2$-cocycles is again a lazy $2$-cocycle.
Quotienting the group of lazy $2$-cocycles by certain coboundaries,
one obtains a group~$\H_{\ell}^2(H)$, which was introduced by Schauenburg~\cite{sc3}. 
If $H$ is cocommutative, then all $2$-cocycles are lazy and $\H_{\ell}^2(H)$
coincides with Sweedler's cohomology group mentioned above.
In particular, if $H = k[G]$ is a group algebra,
then $\H_{\ell}^2(H)$ coincides with the cohomology group~$H^2(G,k^{\times})$
of the group~$G$ acting trivially on the group~$k^{\times}$ 
of nonzero elements of the ground field~$k$.
This makes it natural to consider~$\H_{\ell}^2(H)$
as an analogue of the Schur multiplier for arbitrary Hopf algebras.
The lazy cohomology group $\H_{\ell}^2(H)$ was systematically investigated 
by Carnovale and the first-named author in~\cite{bc}; see also \cite{cc, cp, psvo}.
Lazy cocycles have been used to compute the Brauer group of a Hopf algebra.
The group~$\H_{\ell}^2(H)$ also allows to equip the category of projective
representations of~$H$ with the structure of a crossed $\pi$-category in
the sense of Turaev~\cite{tu}.

In group cohomology, the group $H^2(G,k^{\times})$ is related to 
the homology groups $H_1(G)$ and~$H_2(G)$ of~$G$
\textit{via} the so-called \emph{universal coefficient theorem}, 
which can be formulated as an exact sequence of the form
$$0 \to \Ext^1(H_1(G),k^{\times}) \to H^2(G,k^{\times}) \to \Hom(H_2(G),k^{\times}) \to 0 \, .$$
It is natural to ask whether for an arbitrary Hopf algebra~$H$ there exist 
``lazy homology groups" $\H^{\ell}_1(H)$ and $\H^{\ell}_2(H)$ related to
the lazy cohomology~$\H_{\ell}^2(H)$ in a similar way 
and coinciding with Sweedler-type homology groups
for cocommutative Hopf algebras.

In this paper we give a positive answer to this question. 
To each Hopf algebra~$H$ we associate two
commutative Hopf algebras~$\H^{\ell}_1(H)$ and~$\H^{\ell}_2(H)$, 
which we call the \emph{lazy homology Hopf algebras} of~$H$.
These Hopf algebras are based on tensors satisfying conditions that are dual to the conditions
defining lazy cocycles. When $H$ is cocommutative, then 
$\H^{\ell}_1(H)$ and~$\H^{\ell}_2(H)$ are part 
of an infinite sequence of commutative cocommutative Hopf algebras
that appear as the homology of a simplicial object in the category of
commutative cocommutative Hopf algebras.

We establish two universal coefficient theorems relating the lazy homology
to the lazy cohomology.
The first one states that the group $\Alg(\H^{\ell}_1(H),R)$ of algebra morphisms
from~$\H^{\ell}_1(H)$ to a commutative algebra~$R$ is isomorphic to the 
lazy cohomology group~$\H_{\ell}^1(H,R)$ of lazy $1$-cocycles with coefficients in~$R$.
The second universal coefficient theorem can be expressed as an exact sequence 
of groups of the form
\begin{equation}\label{UCT20}
1 \longrightarrow \Ext^1(H,R) \overset{\delta_{\#}}{\longrightarrow} \H^2_{\ell}(H,R)
\overset{\kappa}{\longrightarrow} \Alg(\H_2^{\ell}(H),R) \, .
\end{equation}
The group~$\Ext^1(H,R)$ is a Hopf algebra analogue of the usual~$\Ext^1$-group.
When $R$ coincides with the ground field~$k$ and the latter is algebraically closed, then
the homomorphism~$\kappa$ in~\eqref{UCT20} is an isomorphism:
\begin{equation*}
\H^2_{\ell}(H)  = \H^2_{\ell}(H,k) \cong \Alg(\H_2^{\ell}(H),k) \, .
\end{equation*}
We finally compute the lazy homology of the four-dimensional Sweedler algebra~$H_4$;
not surprisingly, our computation agrees with the computation 
of the lazy cohomology of~$H_4$ performed in~\cite{bc}.

The paper is organized as follows. 
Section~\ref{notation} is devoted to various preliminaries.
We first recall what lazy cocycles are and what lazy cohomology is; 
here we consider cocycles with values not only in the ground field,
but in an arbitrary commutative algebra. 
We next define exact sequences of Hopf algebras
and investigate the exactness of the induced sequences after application of the functor~$\Alg(-,R)$.
We also recall Takeuchi's free commutative Hopf algebra generated by a coalgebra, of which
we make an extensive use in the sequel. 

In Section~\ref{Swsimplicial}, after recalling the construction of Sweedler's cohomology,
we attach a simplicial commutative Hopf algebra $F(\Gamma_*(H))$ to each Hopf algebra~$H$.
When $H$ is cocommutative, then $F(\Gamma_*(H))$ is a simplicial object
in the category of commutative cocommutative Hopf algebras.
Since the latter is abelian, we can take the corresponding homology~$\H^{\SW}_*(H)$, 
which turns out to be an infinite sequence of commutative cocommutative Hopf algebras.
We explicitly compute the low-degree differential in the chain complex associated to
the simplicial object~$F(\Gamma_*(H))$. The constructions in this section
will be a precious guide for the definition of the lazy homology in Sections~\ref{lazyhom1} 
and~\ref{lazyhom2}. 

In Section~\ref{predual} we perform certain basic tensor constructions 
that allow us to ``predualize" the conditions defining lazy cocycles;
these constructions will be central in the subsequent sections.

Section~\ref{lazyhom1} starts with the definition of the first lazy homology 
Hopf algebra~$\H^{\ell}_1(H)$
and the proof of the isomorphism 
$\H_{\ell}^1(H,R)  \cong \Alg(\H^{\ell}_1(H),R)$ mentioned above. 
We next give an alternative definition of~$\H^{\ell}_1(H)$, which allows us to show that
if $H$ is the Hopf algebra of functions on a finite group~$G$, then
$\H^{\ell}_1(H)$ is isomorphic to the Hopf algebra of functions on the center of~$G$.
We also interpret $\H^{\ell}_1(H)$ as a kind of homology group, from which we deduce
that it coincides with the Sweedler-type homology Hopf algebra~$\H^{\SW}_1(H)$
(introduced in Section~\ref{Swsimplicial})
when $H$ is cocommutative. 

In Section~\ref{cosemisimple} we consider the case when
$H$ is a cosemisimple Hopf algebra over an algebraically field
of characteristic zero, and we compute~$\H^{\ell}_1(H)$
in terms of the ``universal abelian grading group'' of the tensor category of $H$-comodules.
As an application of our techniques, we recover M\"uger's construction of the center
of a compact group from its representation category.

In Section~\ref{lazyhom2} we define the second lazy homology Hopf algebra~$\H^{\ell}_2(H)$; 
it coincides with the Sweedler-type homology Hopf algebra~$\H^{\SW}_2(H)$ when $H$ is cocommutative.
The main result of this section is the exact sequence~\eqref{UCT20}. 

Section~\ref{Swalgebra} is devoted to the computation of the lazy homology Hopf algebras
of the Sweedler algebra.

\section{Notation and Preliminaries}\label{notation}

Throughout the paper, we fix a field~$k$ over which all
our constructions are defined. 
In particular, all linear maps are supposed to be $k$-linear
and unadorned tensor products mean tensor products over~$k$.

All algebras that we consider are associative unital $k$-algebras.
The unit of an algebra~$A$ will be denoted by~$1_A$, or~$1$ if no confusion is possible.
All algebra morphisms are supposed to preserve the units.
We denote the set of algebra morphisms from $A$ to~$A'$ by $\Alg(A,A')$.

All coalgebras considered are coassociative counital $k$-coalgebras. 
We denote the coproduct of a coalgebra by~$\Delta$ and its counit by~$\eps$.
We shall also make use of a Heyneman-Sweedler-type notation 
for the image 
$$\Delta(x) = x_1 \otimes x_2$$
of an element~$x$ of a coalgebra~$C$
under the coproduct, and we write
$$\Delta^{(2)}(x) = x_1 \otimes x_2 \otimes x_3$$
for the iterated coproduct 
$\Delta^{(2)} = (\Delta \otimes \id_C) \circ \Delta = (\id_C \otimes \Delta) \circ \Delta$,
and so~on. 

If $C$ and $C'$ are coalgebras, then $\Coalg(C,C')$ denotes the 
set of coalgebra morphisms from $C$ to~$C'$.
Similarly, if $H$ and $H'$ are Hopf algebras, then $\Hopf(H,H')$ denotes the 
set of Hopf algebra morphisms from $H$ to~$H'$.
In general, we assume that the reader is familiar with coalgebras and Hopf algebras,
in particular with~\cite{Sw}.

\subsection{Convolution monoids}\label{convolution}

Let $C$ be a coalgebra with coproduct~$\Delta$ and let $R$ be an algebra with product~$\mu$.
Recall that the convolution product of  $f,g \in \Hom(C,R)$ is defined by
\begin{equation}\label{convol}
f* g = \mu \circ (f \otimes g) \circ \Delta \in \Hom(C,R) \, .
\end{equation}
The convolution product endows $\Hom(C,R)$ with a monoid structure whose unit
is $\eta \varepsilon$, where $\eta: k \to R$ is the unit of~$R$ and
$\varepsilon : C \to k$ is the counit of~$C$. 
The group of convolution invertible elements in $\Hom(C,R)$ is denoted by~$\Reg(C,R)$. 
It is well known that the group $\Reg(C,R)$ is abelian if $C$ is cocommutative
and $R$ is commutative.

Suppose in addition that $C$ is an $H$-module coalgebra and $R$ is an $H$-module algebra
for some Hopf algebra~$H$. Then the subset $\Hom_H(C,R)$ of $H$-linear maps
in~$\Hom(C,R)$ is a submonoid of the latter for the convolution product.
We denote by~$\Reg_H(C,R)$ the group of convolution invertible elements in $\Hom_H(C,R)$.
This is a subgroup of~$\Reg(C,R)$; it is \emph{abelian} if $C$ is cocommutative
and $R$ is commutative.

\subsection{Lazy cocycles and cohomology}\label{lazycohom}
Let us recall several notions and notation from
\cite{bc,ch,sc3}, with some slight modification. 
We fix a \emph{commutative} algebra~$R$.

Given a coalgebra~$C$, the subgroup of \emph{lazy elements} of $\Reg(C,R)$ is defined by
\begin{equation}\label{lazy1}
\Reg_{\ell}(C,R) = \{ \mu \in \Reg(C,R) \ | \ 
\mu(x_1)\otimes x_2 = \mu(x_2)\otimes x_1 \; \text{for all} \;  x \in C\} \, .
\end{equation}
The previous equalities take place in~$R\otimes C$.

Let $H$ be a Hopf algebra. The subgroup of elements
of $\mu \in \Reg(H,R)$ satisfying $\mu(1)=1$ is denoted by $\Reg^1(H,R)$,
and we set
\begin{equation*}
\Reg^1_{\ell}(H,R) = \Reg^1(H,R) \cap \Reg_{\ell}(H,R) \, .
\end{equation*}

\begin{definition}\label{lazycoh1}
The first lazy cohomology group of $H$ with coefficients in~$R$
is the group
\begin{equation*}
\H_{\ell}^1(H,R) = \Alg(H,R)\cap \Reg^1_{\ell}(H,R) \, .
\end{equation*}
\end{definition}
 
The set $\reg_{\ell}^2(H,R)$ of \emph{lazy elements} in $\Reg(H \otimes H,R)$ 
consists of all those $\sigma \in \Reg(H\otimes H,R)$ such that
\begin{equation}\label{lazy2}
\sigma(x_1 \otimes y_1)\otimes x_2y_2  
= \sigma(x_2 \otimes y_2) \otimes x_1y_1 \in R \otimes H
\end{equation}
for all $x,y \in H$.
The subgroup of normalized elements of $\Reg(H \otimes H,R)$,
i.e., satisfying the additional condition
$$\sigma(x \otimes 1)=\varepsilon(x) \, 1_R =\sigma(1 \otimes x)$$ 
for all $x \in H$, is denoted by~$\Reg^2(H,R)$.
We set
\begin{equation}
\Reg^2_{\ell}(H,R) = \Reg^2(H,R)\cap \reg_{\ell}^2(H,R) \, .
\end{equation}

A \emph{left 2-cocycle} of~$H$ with coefficients in~$R$
is an element $\sigma \in \Reg^2(H,R)$ such that 
\begin{equation}\label{2cocycle}
\sigma(x_{1} \otimes  y_{1}) \,  \sigma(x_{2}y_{2} \otimes z) =
\sigma(y_{1} \otimes z_{1}) \, \sigma(x  \otimes y_{2} z_{2}) 
\end{equation}
for all $x,y,z \in H$.
We denote by~$Z^2(H,R)$ the set of such left $2$-cocycles and we set
\begin{equation}\label{ZL2}
Z^2_{\ell}(H,R) = Z^2(H,R)\cap \Reg^2_{\ell}(H,R) \, .
\end{equation}
The set $Z^2_{\ell}(H,R)$ is called the set of \emph{lazy $2$-cocycles with coefficients in~$R$}.
Chen~\cite{ch} was the first to observe that this set is a group
under the convolution product. 

For $\mu \in \Reg^1(H,R)$, define $\partial(\mu) \in \Reg^2(H,R)$ for all $x,y \in H$ by
\begin{equation}\label{coboundary}
\partial(\mu)(x\otimes y) = \mu(x_1) \, \mu(y_1) \, \mu^{-1}(x_2 y_2) \, ,
\end{equation}
where $\mu^{-1}$ is the convolution inverse of~$\mu$.
This defines a map
$$\partial : \Reg^1(H,R) \to \Reg^2(H,R) \, .$$
The image $B^2_{\ell}(H,R)$ of~$\partial$
is a central subgroup of $Z^2_{\ell}(H,R)$.

\begin{definition}\label{lazycoh2}
The second lazy cohomology group of~$H$ with coefficients in~$R$
is the quotient-group
\begin{equation*}
\H_{\ell}^2(H,R) = Z^2_{\ell}(H,R)/B_{\ell}^2(H,R) \, .
\end{equation*}
\end{definition}

As was pointed out in~\cite{bc}, there is no reason why the second lazy cohomology group
should be abelian in general, although it turns out to be abelian in all known computations.

When $R$ is the ground field~$k$, then $\H_{\ell}^i(H,R)$ ($i=1,2$) coincides with the group
denoted by~$H^i_{\textrm{L}}(H)$ in~\cite[Def.~1.7]{bc}. The above definition of
the lazy cohomology with values in an arbitrary commutative algebra~$R$ 
(rather than in the ground field) was
already suggested in~\cite[App.~A]{bc}.

\subsection{Hopf kernels and Hopf quotients}\label{HKer}
 
Following~\cite[Def.~1.1.5]{ad}, we say that a Hopf algebra morphism $\pi : H \to B$
is \emph{normal} if 
\begin{equation}\label{normal}
\{ x \in H \ | \ \pi(x_1) \otimes x_2 = 1 \otimes x \}
=  \{ x \in H \ | \ x_1 \otimes \pi(x_2) = x \otimes 1 \} \, .
\end{equation}
If $H$ is cocommutative, then any Hopf algebra morphism $\pi : H \to B$ is normal.

When $\pi$ is normal, then by~\cite[Lemma~1.1.4]{ad}, 
both sides of~\eqref{normal} coincide with
\begin{equation}\label{Hopfkernel}
\HKer(\pi) = \{ x \in H \ | \ x_1 \otimes \pi(x_2) \otimes x_3 
= x_1 \otimes 1 \otimes x_2\} \, .
\end{equation}
We call $\HKer(\pi)$ the \emph{Hopf kernel} of $\pi : H \to B$.
By~\cite[Lemma~1.1.3]{ad}, $\HKer(\pi)$ is a Hopf subalgebra of~$H$.
Observe that the counit $\varepsilon : H \to k$ is always normal and that
$\HKer(\varepsilon) = H$.

Let us illustrate the concept of a Hopf kernel in two special cases.
If $u : G \to G'$ is a homomorphism of groups and $k[u]: k[G] \to k[G']$
the induced morphism of Hopf algebras, then 
\begin{equation}\label{HKerG}
\HKer(k[u]) = k[\Ker(u)] \, .
\end{equation}

When $H = \mathcal O(G)$ and $B= \mathcal O(N)$, 
where $N \subset G$ are algebraic groups
and $k$ is an algebraically closed field of characteristic zero, then
the Hopf kernel of the natural Hopf algebra morphism 
$\mathcal O(G) \to \mathcal O(N)$
is isomorphic to~$\mathcal  O(G/\langle N\rangle)$,
where $\langle N \rangle$ is the normal algebraic subgroup of~$G$ generated by~$N$.  

Let $A \subset H$ be a Hopf subalgebra, and let 
$A^+$ = $\Ker(\varepsilon: A \to k)$
be the augmentation ideal of~$A$.
When $A^+H \nolinebreak = \nolinebreak HA^+$, 
we define the \emph{Hopf quotient}~$H \quot A$ 
to be the quotient Hopf algebra~$H /A^+H$:
\begin{equation}\label{quotient}
H \quot A = H/ A^+H \, .
\end{equation}

If $G_0$ is a normal subgroup of a group~$G$, then $k[G_0]$ is a 
Hopf subalgebra of~$k[G]$ and $k[G_0]^+ k[G] = k[G] k[G_0]^+$.
Moreover, the Hopf quotient $k[G]\quot k[G_0]$ is 
isomorphic to the Hopf algebra of the quotient group~$G/G_0$:
\begin{equation}\label{quotientG}
k[G]\quot k[G_0] \cong k[G/G_0] \, .
\end{equation}

\subsection{Short exact sequences}\label{exact-seq}

For any Hopf algebra~$H$ and any commutative algebra~$R$,
the convolution product~\eqref{convol} preserves the subset
$\Alg(H,R)$ of~$\Hom(H,R)$
and turns it into a group, the inverse of each~$f\in \Alg(H,R)$
being $f\circ S$, where $S$ is the antipode of~$H$.
The group~$\Alg(H,R)$ is abelian if $H$ is cocommutative.
If $\varphi : H \to H'$ is a morphism of Hopf algebras,
then the map 
$$\varphi^*: \Alg(H',R) \to \Alg(H,R)$$
defined by $\varphi^*(f) = f \circ \varphi$, where $f \in \Alg(H',R)$,
is a homomorphism of groups. In this way, $\Alg(H,R)$
becomes a contravariant functor from the category of
Hopf algebras (resp.\ cocommutative Hopf algebras)
to the category of groups (resp.\ abelian groups).

We have the following obvious fact.

\begin{lemma}\label{inject}
If $\varphi : H \to H'$ is a surjective morphism of Hopf algebras,
then
the homomorphism $\varphi^*: \Alg(H',R) \to \Alg(H,R)$
is injective for any commutative algebra~$R$.
\end{lemma}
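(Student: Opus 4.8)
The plan is to exploit the surjectivity of~$\varphi$ directly; there is genuinely no obstacle here, which is why the statement is flagged as obvious. Since $\varphi^*$ is a group homomorphism (as recalled just before the statement), its injectivity is equivalent to the triviality of its kernel, but it is just as transparent to argue injectivity at the level of the underlying maps of sets. First I would take two algebra morphisms $f, g \in \Alg(H',R)$ with $\varphi^*(f) = \varphi^*(g)$, which by definition of~$\varphi^*$ means $f \circ \varphi = g \circ \varphi$ as maps $H \to R$.

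Next I would invoke surjectivity: every $y \in H'$ can be written as $y = \varphi(x)$ for some $x \in H$, whence $f(y) = f(\varphi(x)) = g(\varphi(x)) = g(y)$. As this holds for all $y \in H'$, we conclude $f = g$, so $\varphi^*$ is injective. Equivalently, if one prefers the kernel formulation, recall that the unit of the convolution group $\Alg(H,R)$ is $\eta \varepsilon$ and that any Hopf algebra morphism satisfies $\varepsilon = \varepsilon \circ \varphi$; then $f \in \Ker(\varphi^*)$ means $f \circ \varphi = \eta \varepsilon = (\eta\varepsilon) \circ \varphi$, and surjectivity of~$\varphi$ again forces $f = \eta\varepsilon$ to be the identity of~$\Alg(H',R)$.

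The one point deserving a word of care is conceptual rather than computational: we are asserting injectivity of~$\varphi^*$ as a homomorphism of groups, so it is worth noting what each hypothesis contributes. The argument uses nothing about the commutativity of~$R$ or the Hopf structure beyond the compatibility $\varepsilon = \varepsilon \circ \varphi$; commutativity of~$R$ is needed only to guarantee that $\Alg(H,R)$ and $\Alg(H',R)$ are groups under convolution in the first place, so that the word ``homomorphism'' in the statement is meaningful. Thus the entire content is the elementary fact that precomposition with an epimorphism is a monomorphism of morphism sets.
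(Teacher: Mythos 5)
Your proof is correct and is precisely the argument the paper has in mind: the authors state Lemma~\ref{inject} as an ``obvious fact'' without proof, and the content is exactly your observation that precomposition with a surjection is injective on sets of maps. Your side remarks (the kernel formulation via $\varepsilon = \varepsilon \circ \varphi$, and the role of commutativity of~$R$ in making $\Alg(-,R)$ a group) are accurate and consistent with the setup in Section~\ref{exact-seq}.
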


Let
\begin{equation}\label{exact}
k \longrightarrow A \overset{\iota}{\longrightarrow}  
H \overset{\pi}{\longrightarrow}  B \longrightarrow k
\end{equation}
be a sequence of Hopf algebra morphisms.
Following~\cite{ad}, we say that the sequence~\eqref{exact} is \emph{exact}
if $\iota$ is injective, $\pi$ is surjective, and $\Ker(\pi) = \iota(A)^+ H$.
(In~\cite{ad} there is an additional condition,
which we do not need here;
this condition is automatically satisfied if the Hopf algebra~$H$ is
commutative; see~\cite[Prop.~1.2.4]{ad}.)
It follows from the third condition that $\pi \circ \iota = \eps\eta$.
Note also that if $\iota(A)^+ H = H \iota(A)^+$,
then $B$ is isomorphic to the Hopf quotient~$H \quot \iota(A)$ 
defined by~\eqref{quotient}.

\begin{proposition}\label{Alg-exact}
Any exact sequence
$k \longrightarrow A \overset{\iota}{\longrightarrow}  
H \overset{\pi}{\longrightarrow}  B \longrightarrow k$ 
of Hopf algebras induces an exact sequence of groups
\begin{equation*}
1 \longrightarrow \Alg(B,R) \overset{\pi^*}{\longrightarrow}  
\Alg(H,R) \overset{\iota^*}{\longrightarrow}  \Alg(A,R)
\end{equation*}
for any commutative algebra~$R$.
\end{proposition}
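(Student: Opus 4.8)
The plan is to establish the two requirements hidden in the word \emph{exact}: that $\pi^*$ is injective, and that $\Im(\pi^*) = \Ker(\iota^*)$, everything understood with respect to the convolution group structure on the sets $\Alg(-,R)$ whose neutral elements are the maps $\eps\eta$. First I would dispose of injectivity of $\pi^*$: since $\pi$ is surjective by hypothesis, this is immediate from Lemma~\ref{inject}, so the sequence legitimately starts with~$1$.

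Next I would check the inclusion $\Im(\pi^*) \subseteq \Ker(\iota^*)$. Using $\iota^* \circ \pi^* = (\pi\circ\iota)^*$ together with the relation $\pi\circ\iota = \eps\eta$ recorded just before the statement, one finds for $f \in \Alg(B,R)$ and $a \in A$ that $\iota^*(\pi^*(f))(a) = f(\pi(\iota(a))) = f(\eps_A(a)1_B) = \eps_A(a)1_R$, the last equality holding because $f$ preserves units. Hence $\iota^*(\pi^*(f))$ is the neutral element of $\Alg(A,R)$, which gives the inclusion.

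The heart of the proof is the reverse inclusion $\Ker(\iota^*) \subseteq \Im(\pi^*)$. Take $g \in \Alg(H,R)$ with $g \circ \iota = \eps_A\eta_R$; I claim $g$ vanishes on $\Ker(\pi)$. Indeed, for $a \in A^+ = \Ker(\eps_A)$ we have $g(\iota(a)) = \eps_A(a)1_R = 0$, so $g$ kills $\iota(A^+)$; and since $g$ is multiplicative, $g(\iota(a)h) = g(\iota(a))\,g(h) = 0$ for every $h \in H$, so $g$ kills the whole left ideal $\iota(A^+)H$. As $\iota$ is injective and intertwines the counits, $\iota(A^+)$ coincides with the augmentation ideal $\iota(A)^+$, whence $\iota(A^+)H = \iota(A)^+H = \Ker(\pi)$ by the third condition in the definition of exactness. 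Therefore $g$ annihilates the two-sided ideal $\Ker(\pi)$ and factors as an algebra morphism through $H/\Ker(\pi)$. Since $\pi$ is a surjective algebra morphism with kernel $\Ker(\pi)$, it identifies this quotient with~$B$, producing a unique $f \in \Alg(B,R)$ with $f\circ\pi = g$, that is, $\pi^*(f) = g$. Thus $g \in \Im(\pi^*)$.

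The only delicate point, mild as it is, lies in this last step: it is the multiplicativity of~$g$ (and not mere linearity) that upgrades its vanishing on the generators $\iota(A^+)$ to vanishing on all of $\Ker(\pi) = \iota(A^+)H$, and it is precisely the explicit description of $\Ker(\pi)$ furnished by the definition of exactness that makes this propagation possible. Everything else is formal: that $\pi^*$ and $\iota^*$ are group homomorphisms was already observed before the statement, so the computed image and kernel are genuine subgroups and the displayed sequence is exact in the category of groups.
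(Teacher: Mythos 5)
Your proof is correct and follows essentially the same route as the paper's: injectivity of $\pi^*$ from Lemma~\ref{inject}, the easy inclusion from $\pi\circ\iota=\eps\eta$, and the reverse inclusion by showing that an element of $\Ker(\iota^*)$ vanishes on $\iota(A)^+H=\Ker(\pi)$ and hence factors through~$B$ as an algebra morphism. The only quibble is terminological: $\iota(A)^+H$ is a \emph{right} ideal, not a left ideal, though this does not affect your argument since multiplicativity of~$g$ kills the products $\iota(a)h$ in any case.
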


It follows under the conditions of the previous proposition that
$\Alg(B,R)$ is a \emph{normal} subgroup of~$\Alg(H,R)$.

\pf
In view of Lemma~\ref{inject} it is enough to check that
$\Ker \iota^* = \Im \pi^*$.
Let $f\in \Alg(H,R)$ be such that $\iota^*(f) = \eps\eta$.
This means that $f(i(x)) = \eps(x)\, 1$ for all $x\in A$.
Therefore, $f$ vanishes on~$\iota(A)^+$, hence on~$\iota(A)^+H$.
Since by assumption, $\Ker(\pi) = \iota(A)^+H$,
the map $f$ vanishes on~$\Ker(\pi)$.
It follows that there is $\bar{f} \in \Hom(B,R)$
such that $f = \bar{f} \circ \pi$. It is easy to check
that $\bar{f}$ is an algebra morphism.
Hence, $f = \pi^*(\bar{f}) \in \Im \pi^*$.
\epf

There is no reason why the homomorphism~$\iota^*$ of Proposition~\ref{Alg-exact}
should be surjective in general.
Nevertheless surjectivity holds in the following case.

\begin{proposition}\label{Alg-surj}
If $k \longrightarrow A \overset{\iota}{\longrightarrow}  
H \overset{\pi}{\longrightarrow}  B \longrightarrow k$ 
is an exact sequence of commutative Hopf algebras 
and $k$ is algebraically closed, then the sequence
\begin{equation*}
1 \longrightarrow \Alg(B,k) \overset{\pi^*}{\longrightarrow}  
\Alg(H,k) \overset{\iota^*}{\longrightarrow}  \Alg(A,k) \longrightarrow 1
\end{equation*}
is exact.
\end{proposition}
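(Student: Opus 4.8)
The plan is to observe that Proposition~\ref{Alg-exact} already delivers the injectivity of $\pi^*$ together with the equality $\Ker \iota^* = \Im \pi^*$, so the entire new content of the statement is the \emph{surjectivity} of $\iota^* : \Alg(H,k) \to \Alg(A,k)$. I would reformulate this geometrically. Since $A$, $H$, $B$ are commutative, applying $\operatorname{Spec}$ turns the exact sequence into a short exact sequence of affine group schemes $1 \to \operatorname{Spec}(B) \to \operatorname{Spec}(H) \to \operatorname{Spec}(A) \to 1$, in which $\operatorname{Spec}(B)$ is the kernel of the faithfully flat quotient morphism $\operatorname{Spec}(H) \to \operatorname{Spec}(A)$; indeed the defining condition $\Ker \pi = \iota(A)^+ H$ says precisely that $B = H \quot \iota(A)$ represents the fibre over the identity, i.e.\ this kernel. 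Under this dictionary $\Alg(-,k)$ is the functor of $k$-points and $\iota^*$ is the induced map $\operatorname{Spec}(H)(k) \to \operatorname{Spec}(A)(k)$, so surjectivity of $\iota^*$ is exactly surjectivity of the quotient morphism on $k$-rational points.

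The second step is the concrete lifting. Given $f \in \Alg(A,k)$, I would set $\mathfrak m = \Ker(f)$, a maximal ideal of $A$ with $A/\mathfrak m = k$, and consider the fibre algebra $H/\mathfrak m H \cong H \otimes_A k_f$. The crucial input is that a commutative Hopf algebra is faithfully flat over any Hopf subalgebra; this is the flatness hypothesis hidden in the notion of exact sequence, automatic in the commutative case by \cite[Prop.~1.2.4]{ad}. Faithful flatness forces $\mathfrak m H \neq H$, hence $H/\mathfrak m H \neq 0$. Any algebra morphism $\bar g : H/\mathfrak m H \to k$ then pulls back along the projection $H \to H/\mathfrak m H$ to an element $g \in \Alg(H,k)$, and writing each $a \in A$ as $f(a)\,1 + (a - f(a)\,1)$ with $a - f(a)\,1 \in \mathfrak m$ shows immediately that $g \circ \iota = f$, that is, $\iota^*(g) = f$. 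Thus everything reduces to producing a single $k$-\emph{rational} point of the nonzero commutative $k$-algebra $H/\mathfrak m H$.

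The hard part will be exactly this last point, namely controlling the residue field of a maximal ideal of the fibre. Geometrically $H/\mathfrak m H$ is not merely nonzero but is a $B$-Galois object over $k$, equivalently a torsor under the kernel group scheme $\operatorname{Spec}(B)$, and the assertion to be proved is that such a torsor over an algebraically closed field is trivial and therefore carries a $k$-point. When $H$ is of finite type this follows at once from the Nullstellensatz, since a nonzero finitely generated algebra over the algebraically closed field $k$ admits a maximal ideal with residue field~$k$; in the general case one argues instead that torsors under an affine group scheme over $\overline{k}$ split. It is precisely here that both hypotheses of the proposition are used essentially: algebraic closedness of $k$ guarantees the $k$-rationality of the point, while the faithful flatness coming from commutativity guarantees that the fibre in which we seek that point is nonempty. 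Once $\bar g$ has been produced, the verification $\iota^*(g) = f$ from the previous paragraph closes the argument.
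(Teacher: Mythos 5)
Your proposal is correct and follows essentially the same route as the paper's proof: reduce everything to the surjectivity of $\iota^*$ via Proposition~\ref{Alg-exact}, use faithful flatness of $H$ over the Hopf subalgebra $\iota(A)$ to conclude $\mathfrak{m}H \neq H$, and then produce a $k$-rational point of the nonzero fibre by the Nullstellensatz in the finitely generated case, with the passage beyond that case left as a sketch in both your argument (torsor splitting) and the paper's (``we may assume finitely generated''). One small attribution slip: the faithful flatness of a commutative Hopf algebra over a Hopf subalgebra is Takeuchi's theorem \cite[Th.~3.1]{ta72} (see also \cite[Sect.~14.1]{wa}), whereas \cite[Prop.~1.2.4]{ad} concerns the extra condition in the definition of an exact sequence being automatic in the commutative case.
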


\pf
In view of Proposition~\ref{Alg-exact},
it is enough to check the surjectivity of 
$\iota^*: \Alg(H,k) \to \Alg(A,k)$.
We sketch a proof of the surjectivity of~$\iota^*$
following~\cite[Chap.~15, Exercise~3\,(a)]{wa}.
We may assume that the Hopf algebras $A$ and $H$ are finitely generated.
Let $\chi \in \Alg(A,k)$ and $I = \Ker \chi$.
Since $H$ is faithfully flat over~$A$ by~\cite[Th.~3.1]{ta72} (see also~\cite[Sect.~14.1]{wa}),
we have $IH \neq H$ by~\cite[Sect.~13.2]{wa}.
Therefore there is a maximal ideal~$J$ of~$H$
such that $IH \subset J$. The field~$H/J$ is a finite extension of~$k$;
since $k$ is algebraically closed, we have~$H/J = k$.
Then the natural projection $H \to H/J = k$ is an element
of~$\Alg(H,k)$ extending~$\chi$.
\epf

\subsection{The free commutative Hopf algebra generated by a coalgebra}\label{Takeuchi}

For any coalgebra~$C$ we denote by~$F(C)$ the
\emph{free commutative Hopf algebra} 
generated by the coalgebra $C$, as constructed by Takeuchi in~\cite{ta}:
it is the commutative algebra presented by generators $t(x)$ and ${t}^{-1}(x)$, $x \in C$,
subject to the following relations ($x,y \in H$, $\lambda \in k$):
\begin{equation*}
t(\lambda x + y) = \lambda t(x) + t(y)\, ,
\quad
{t}^{-1}(\lambda x + y)= \lambda {t}^{-1}(x) + {t}^{-1}(y) \, , 
\end{equation*}
and
\begin{equation}\label{ttbar}
t(x_1) \, {t}^{-1}(x_2) = \, \varepsilon(x)\, 1 = {t}^{-1}(x_1) \, t(x_2) \, .
\end{equation}

The coproduct~$\Delta$, the counit~$\varepsilon$, and the antipode~$S$ of $F(C)$ are given 
for all $x \in C$~by
\begin{equation}\label{Hopf-on-Takeuchi}
\begin{aligned}
\Delta \bigl( t(x) \bigr) & = t(x_1) \otimes t(x_2) \, , \quad
\Delta \bigl( {t}^{-1}(x) \bigr) = {t}^{-1}(x_2) \otimes {t}^{-1}(x_1) \, , \\
\varepsilon \bigl( t(x) \bigr) & = \varepsilon \bigl( {t}^{-1}(x) \bigr)  = \varepsilon(x) \, , \\
S \bigl( t(x) \bigr) & = {t}^{-1}(x) \, , \quad
S \bigl( {t}^{-1}(x) \bigr) = t(x) \, ,
\end{aligned}
\end{equation}
and the canonical map $t : C \to F(C) , \, x \mapsto t(x)$, 
is a coalgebra morphism. 
By~\cite{ta}, the Hopf algebra $F(C)$ has the following universal property.
 
\begin{proposition}\label{F-univprop}
For any commutative Hopf algebra~$R$,
the map $f \mapsto f \circ t$ induces an isomorphism
\begin{equation*}
\Hopf(F(C),R) \overset{\cong}{\longrightarrow} \Coalg(C,R) \, .
\end{equation*}
\end{proposition}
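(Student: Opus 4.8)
The plan is to construct the inverse of the map $f \mapsto f \circ t$ by exploiting the presentation of $F(C)$ by generators and relations, so that the entire argument reduces to checking that a prescribed assignment on the generators $t(x)$ and $t^{-1}(x)$ respects the defining relations and the Hopf structure. Before anything else, I note that the map is well defined: since $t$ is a coalgebra morphism and any $f \in \Hopf(F(C),R)$ is in particular a coalgebra morphism, the composite $f\circ t$ lies in $\Coalg(C,R)$.

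For injectivity, I would use that $F(C)$ is generated as an algebra by the elements $t(x)$ and $t^{-1}(x)$ for $x \in C$. If $f_1 \circ t = f_2 \circ t$, then $f_1$ and $f_2$ agree on all $t(x)$; since they are Hopf algebra morphisms they commute with the antipode, so by \eqref{Hopf-on-Takeuchi} we have $f_i(t^{-1}(x)) = f_i(S(t(x))) = S_R(f_i(t(x)))$, where $S_R$ denotes the antipode of~$R$, whence $f_1$ and $f_2$ also agree on all $t^{-1}(x)$. As these elements generate $F(C)$, this forces $f_1 = f_2$.

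For surjectivity, I would start from an arbitrary $g \in \Coalg(C,R)$ and attempt to define an algebra morphism $f : F(C) \to R$ on generators by
$$f(t(x)) = g(x), \qquad f(t^{-1}(x)) = S_R(g(x)).$$
The two linearity relations hold because $g$ and $S_R$ are linear. The crucial relations~\eqref{ttbar} translate, using that $g$ is a coalgebra morphism (so $\Delta(g(x)) = g(x_1)\otimes g(x_2)$ and $\eps(g(x)) = \eps(x)$), into
$$g(x_1)\, S_R(g(x_2)) = \eps(x)\, 1_R = S_R(g(x_1))\, g(x_2),$$
which are exactly the two antipode axioms of~$R$ applied to~$g(x)$. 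Here the commutativity of~$R$ is precisely what guarantees that the images of the pairwise commuting generators of $F(C)$ commute in~$R$, so that $f$ is a well-defined algebra morphism.

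It then remains to check that $f$ is a morphism of Hopf algebras. Since $R$ is itself a Hopf algebra, it suffices to verify that $f$ is a bialgebra morphism, compatibility with the antipode being then automatic. As $f$, $\Delta_R$, and $(f\otimes f)\circ\Delta$ are all algebra morphisms, it is enough to test coproduct and counit compatibility on the generators, using~\eqref{Hopf-on-Takeuchi}: on $t(x)$ this is immediate from $\Delta(g(x)) = g(x_1)\otimes g(x_2)$, while on $t^{-1}(x)$ it uses in addition the anti-coalgebra-morphism property $\Delta_R \circ S_R = (S_R\otimes S_R)\circ\tau\circ\Delta_R$, the counit checks following from $\eps \circ S_R = \eps$. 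Finally $f\circ t = g$ by construction, which gives surjectivity. I expect the only genuine work to lie in verifying the relations~\eqref{ttbar}, where the antipode axioms of~$R$ and the coalgebra-morphism property of~$g$ must be combined; the remaining verifications are routine applications of the formulas in~\eqref{Hopf-on-Takeuchi}.
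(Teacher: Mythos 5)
Your proof is correct. The paper does not actually prove this proposition itself --- it cites Takeuchi~\cite{ta} --- but your argument is the natural direct one from the stated presentation, and it runs exactly parallel to the paper's own proof of the companion Proposition~\ref{Alg-Reg}: there the inverse map sends $\varphi \in \Reg(C,R)$ to the algebra morphism determined by $t(x) \mapsto \varphi(x)$ and $t^{-1}(x) \mapsto \varphi^{-1}(x)$, and your assignment $t^{-1}(x) \mapsto S_R(g(x))$ is precisely the specialization of this to the case where $g$ is a coalgebra morphism, whose convolution inverse is $S_R \circ g$. The two genuinely load-bearing steps are correctly identified and handled: the relations~\eqref{ttbar} become, via $\Delta_R(g(x)) = g(x_1)\otimes g(x_2)$ and $\eps_R(g(x))=\eps(x)$, exactly the antipode axioms of~$R$ applied to~$g(x)$; and the passage from bialgebra morphism to Hopf algebra morphism is legitimately automatic, since any bialgebra morphism between Hopf algebras intertwines the antipodes. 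The remaining verifications (well-definedness of the forward map, injectivity via $f(t^{-1}(x)) = S_R(f(t(x)))$, coproduct compatibility on $t^{-1}(x)$ via $\Delta_R \circ S_R = (S_R\otimes S_R)\circ\tau\circ\Delta_R$, and reduction of the coalgebra-morphism check to generators because all maps involved are algebra morphisms into the commutative algebra $R\otimes R$) are routine and correctly carried out.
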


Thus $C \mapsto F(C)$ defines
a functor from the category of coalgebras to the category of commutative Hopf algebras.
There is another universal property of $F(C)$ that will be important
for us in Section~\ref{predual} when we ``predualize" lazy cohomology.
 
\begin{proposition}\label{Alg-Reg}
For any commutative algebra~$R$,
the map $f \mapsto f \circ t$ induces an isomorphism
\begin{equation*}
\Alg(F(C),R) \overset{\cong}{\longrightarrow} \Reg(C,R) \, .
\end{equation*}
\end{proposition}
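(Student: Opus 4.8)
The plan is to work directly with Takeuchi's presentation of $F(C)$ by generators and relations, the point being that the key relation~\eqref{ttbar} is precisely what encodes convolution invertibility. Throughout, write $t^{-1} : C \to F(C)$ for the linear map $x \mapsto t^{-1}(x)$.

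First I would check that $f \mapsto f \circ t$ really lands in $\Reg(C,R)$. Given $f \in \Alg(F(C),R)$, set $\phi = f \circ t$ and $\psi = f \circ t^{-1}$. Since $t$ is a coalgebra morphism, the coproduct of $C$ governs both factors, and applying $f$ to~\eqref{ttbar} gives in one line $\phi * \psi = \eta\varepsilon = \psi * \phi$; hence $\phi$ is convolution invertible with $\phi^{-1} = \psi$. Using $\Delta(t(x)) = t(x_1) \otimes t(x_2)$, the same observation shows that the assignment is a homomorphism of convolution monoids, $(f * g)\circ t = (f\circ t) * (g \circ t)$, so that $f \mapsto f \circ t$ is a group homomorphism between the two convolution groups.

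For injectivity I would use that $F(C)$ is generated as an algebra by the elements $t(x)$ and $t^{-1}(x)$. If $f \circ t = g \circ t$, then by the previous paragraph both $f \circ t^{-1}$ and $g \circ t^{-1}$ are convolution inverses of the common map $f \circ t = g \circ t$; by uniqueness of convolution inverses they agree. Thus $f$ and $g$ coincide on all algebra generators, so $f = g$.

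Surjectivity is the heart of the matter. Given $\phi \in \Reg(C,R)$ with convolution inverse $\phi^{-1}$, I would define a candidate morphism on generators by $t(x) \mapsto \phi(x)$ and $t^{-1}(x) \mapsto \phi^{-1}(x)$, and then invoke the universal property of the presented commutative algebra $F(C)$ to extend it to an algebra morphism~$f$ (here commutativity of $R$ is what lets us use the presentation). Checking the defining relations is exactly where convolution invertibility enters: the two linearity relations hold because $\phi$ and $\phi^{-1}$ are linear, while applying the candidate to~\eqref{ttbar} yields $\phi(x_1)\phi^{-1}(x_2) = (\phi * \phi^{-1})(x) = \varepsilon(x) 1_R$ and symmetrically $\phi^{-1}(x_1)\phi(x_2) = \varepsilon(x) 1_R$, matching the right-hand side $\varepsilon(x)\,1$. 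Hence $f$ is well defined and satisfies $f \circ t = \phi$ by construction. The only real obstacle is this bookkeeping, and it dissolves once one keeps in mind that relation~\eqref{ttbar} is the precise translation of the identity $\phi * \phi^{-1} = \eta\varepsilon = \phi^{-1} * \phi$, which makes the correspondence with $\Reg(C,R)$ essentially tautological.
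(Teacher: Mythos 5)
Your proof is correct and follows essentially the same route as the paper: both rest on Takeuchi's presentation of $F(C)$, with the relation~\eqref{ttbar} translating exactly into convolution invertibility, and the surjectivity argument (extending $t(x)\mapsto\varphi(x)$, $t^{-1}(x)\mapsto\varphi^{-1}(x)$ through the presented commutative algebra) is precisely the paper's construction of~$\tilde{\varphi}$. The only organizational difference is that you prove injectivity separately via uniqueness of convolution inverses plus generation, whereas the paper exhibits $\varphi\mapsto\tilde{\varphi}$ directly as a two-sided inverse; the two are interchangeable.
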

 
\begin{proof}
For $f \in \Alg(F(C),R)$, the map $f \circ t$ is convolution invertible, with convolution
inverse given by $(f\circ t)^{-1}(x) = f({t}^{-1}(x))$ for all $x\in F(C)$. 
Therefore the map $\Alg(F(C),R) \to \Reg(C,R)$ is well defined; it is
easy to check that it is a homomorphism.

Given $\varphi \in \Reg(C,R)$ with inverse~$\varphi^{-1}$, it is easy to check
that there is a unique algebra morphism $\tilde{\varphi} : F(C) \to R$
such that $\tilde{\varphi}(t(x))= \varphi(x)$ and 
$\tilde{\varphi}({t}^{-1}(x)) = \varphi^{-1}(x)$ for all $x \in C$. 
The assignment $\varphi \mapsto \tilde{\varphi}$ defines a map
$\Reg(C,R)  \to \Alg(F(C),R)$,
which is inverse to the previous one.  
\end{proof}
 
A basic example is the case of the group algebra~$H=k[G]$ of a group~$G$: 
the resulting commutative cocommutative Hopf algebra $F(k[G])$ is 
isomorphic to the group algebra~$k[\ZZ[G]]$, 
where $\ZZ[G]$ is the free (additive) abelian group with basis~$G$.
Note that $k[\ZZ[G]]$ is isomorphic to the Laurent polynomial algebra
$k[\, t(g)^{\pm 1} \, | \, g\in G\, ]$.

\section{Sweedler's Simplicial Coalgebra}\label{Swsimplicial}

In Section~\ref{lazycohom} we defined the lazy cohomology of a Hopf algebra in a direct way.
When the Hopf algebra is cocommutative, the lazy cohomology coincides with
the cohomology introduced by Sweedler in~\cite{sw}.
In this section we first recall the construction of Sweedler's cohomology;
it is based on a simplicial coalgebra, which exists for any Hopf algebra.
Next, starting from the same simplicial coalgebra, to any cocommutative Hopf algebra
we associate a chain complex which yields a Sweedler-type homology theory.
Sweedler's cohomology groups are dual in an appropriate sense to these homology groups.
The lazy homology defined in Sections~\ref{lazyhom1} and~\ref{lazyhom2}
will be obtained from a careful modification of this chain complex.

\subsection{Sweedler's construction}\label{simplicial}
Let $H$ be a Hopf algebra. 
The starting point is the following simplicial $k$-module~$C_*(H)$, 
which was defined by Sweedler in~\cite[\S~2]{sw}:
\begin{equation*}
\cdots \xymatrix{C_{n+1}
\ar@<4ex>[r] \ar@<3ex>[r] \ar@<2ex>[r]_\cdots
& \ar@<1ex>[l] \ar[l]C_n \ar@<4ex>[r] \ar@<3ex>[r] \ar@<2ex>[r]_\cdots& 
\cdots \ar@<1ex>[l] \ar[l]}
\xymatrix{
C_2 \ar@<3ex>[r] \ar@<2ex>[r] \ar@<1ex>[r] &
C_1 \ar@<1ex>[r] \ar[r]\ar@<1ex>[l] \ar[l]
& C_0  \ar@<1ex>[l]} \, .
\end{equation*} 
Here $C_n = H^{\otimes n+1}$, and the face operators
$d_i : H^{\otimes n+1} \to H^{\otimes n}$ and degeneracy operators
$s_i : H^{\otimes n+1} \to H^{\otimes n+2}$ ($i = 0, \ldots, n$) are given by
\begin{equation*}
d_i(x^0 \otimes \cdots \otimes x^n) = 
\begin{cases}
 x^0 \otimes \cdots \otimes x^ix^{i+1} \otimes \cdots \otimes x^n 
 & \text{for $i=0, \ldots, n-1$} \, ,\\
 x^0 \otimes \cdots \otimes x^{n-1} \, \varepsilon(x^n) & \text{for $i=n$} \, ,
\end{cases}
\end{equation*}
and
\begin{equation*}
s_i(x^0 \otimes \cdots \otimes x^n) = x^0 \otimes \cdots \otimes x^i \otimes 1
\otimes \cdots \otimes x^n \quad \text{for $i=0, \ldots , n$} \, ,
\end{equation*}
for all $x^0, \ldots, x^n \in H$.
We give each~$C_n$ its natural $H$-module coalgebra structure,
where the coalgebra structure is induced from that of~$H$ and where the action of~$H$
is defined by its left multiplication on the leftmost $H$-tensorand in~$C_n$.
It is a direct verification that the face and degeneracy operators $d_i$, $s_i$ are 
$H$-module coalgebra morphisms.
It follows that $C_*(H)$ is a \emph{simplicial $H$-module coalgebra}. 
This holds for any Hopf algebra~$H$ without any additional assumption, such as cocommutativity.

Let $R$ be an $H$-module algebra. 
Applying the contravariant functor $\Reg_H(-,R)$,
(defined in Section~\ref{convolution}) to
the simplicial $H$-module coalgebra~$C_*(H)$,
we obtain the \emph{cosimplicial group} $\Reg_H(C_*(H),R)$.

If in addition $H$ is cocommutative, then
$C_*(H)$ is a simplicial object in the category of \emph{cocommutative}
$H$-module coalgebras. Hence,
$\Reg_H(C_*(H),R)$
is a \emph{cosimplicial abelian group}
for any \emph{commutative} $H$-module algebra~$R$.
Sweedler~\cite[\S~2]{sw} defined the \emph{cohomology of~$H$ in~$R$} 
as the cohomology of this cosimplicial abelian group.
We henceforth denote Sweedler's cohomology by~$\H_{\SW}^*(H,R)$:
\begin{equation}
\H_{\SW}^*(H,R) = H^* \bigl(\Reg_H(C_*(H),R)\bigr) \, .
\end{equation}
We stress that the groups~$\H_{\SW}^*(H,R)$ are defined only if 
$H$ is \emph{cocommutative} and $R$ is \emph{commutative}.

Suppose that $G$ is a group and that $H=k[G]$ is the corresponding group algebra
with its standard cocommutative Hopf algebra structure. 
Let $R$ be a commutative $H$-module algebra.
The elements of~$G$ act as automorphisms of~$R$ and hence act on the
subgroup~$R^\times$ of invertible elements of~$R$.
It follows from~\cite[\S~3]{sw} that 
$$\H_{\SW}^*(k[G],R) \cong H^*(G,R^\times)\, ,$$ 
where $H^*(G,R^\times)$ is the group
cohomology of~$G$ with coefficients in~$R^\times$.

\subsection{A Sweedler-type homology theory}\label{Sw-homology}

Let us consider again an arbitrary Hopf algebra~$H$.
Tensoring the simplicial $H$-module
coalgebra $C_*(H)$ of Section~\ref{simplicial}
degreewise by~$k \otimes_H-$, 
we obtain a new \emph{simplicial coalgebra} $\Gamma_*(H)$:
\begin{equation*}
\cdots \xymatrix{\Gamma_{n+1}
\ar@<4ex>[r] \ar@<3ex>[r] \ar@<2ex>[r]_\cdots
& \ar@<1ex>[l] \ar[l]\Gamma_n \ar@<4ex>[r] \ar@<3ex>[r] \ar@<2ex>[r]_\cdots& 
\cdots \ar@<1ex>[l] \ar[l]}
\xymatrix{
\Gamma_2 \ar@<3ex>[r] \ar@<2ex>[r] \ar@<1ex>[r] &
\Gamma_1 \ar@<1ex>[r] \ar[r]\ar@<1ex>[l] \ar[l]
& \Gamma_0 \ar@<1ex>[l] } \, ,
\end{equation*} 
where $\Gamma_n = H^{\otimes n}$ if $n>0$ and $\Gamma_0 = k$, 
and the face operators
$\partial_i : H^{\otimes n} \to H^{\otimes n-1}$ and degeneracy operators
$\sigma_i : H^{\otimes n} \to H^{\otimes n+1}$ are given by
\begin{equation}\label{face}
\partial_i(x^1 \otimes \cdots \otimes x^n) = 
\begin{cases}
\varepsilon({x^1}) \, x^2\otimes \cdots \otimes x^n & \text{for $i=0$} \, ,\\
 x^1 \otimes \cdots \otimes x^i x^{i+1} \otimes \cdots \otimes x^n & 
 \text{for $i=1, \ldots, n-1$} \, ,\\
 x^1 \otimes \cdots \otimes x^{n-1} \, \varepsilon(x^n) & \text{for $i=n$} \, ,
\end{cases}
\end{equation}
and 
\begin{equation*}
\sigma_i(x^1 \otimes \cdots \otimes x^n) =
\begin{cases}
1 \otimes x^1  \otimes \cdots \otimes x^n & \text{for $i=0$} \, , \\
x^1 \otimes \cdots \otimes x^i \otimes 1 \otimes \cdots \otimes x^n 
& \text{for $i=1, \ldots , n$} \, ,
\end{cases}
\end{equation*}
for all $x^1, \ldots, x^n \in H$.
The functor $F$ from coalgebras to commutative Hopf algebras
introduced in Section~\ref{Takeuchi}
now transforms the simplicial coalgebra~$\Gamma_*(H)$ into the 
\emph{simplicial commutative Hopf algebra} $F( \Gamma_*(H))$.

If in addition $H$ is \emph{cocommutative}, then $\Gamma_*(H)$
is a simplicial cocommutative coalgebra and therefore
$F(\Gamma_*(H))$ is a simplicial object in the category
of \emph{commutative cocommutative Hopf algebras}. 
By~\cite[Cor.~4.16]{ta72}, this category is abelian
(recall that $\Hopf(H,H')$ is an abelian group for the convolution 
if $H,H'$ are commutative cocommutative Hopf algebras)
and we can form kernels and quotients in it 
following~\eqref{Hopfkernel} and~\eqref{quotient}.
Therefore we can use the standard methods to form a chain complex
out of the simplicial object~$F( \Gamma_*(H))$. 
We pose the following definition.

\begin{definition}\label{HS-hom}
The Sweedler homology $\H^{\SW}_*(H)$ of a cocommutative Hopf algebra~$H$
is the homology of the chain complex 
\begin{equation}\label{complex}
\cdots \overset{}{\longrightarrow} F(H^{\otimes 3})
\overset{d^{\SW}_3}{\longrightarrow} F(H^{\otimes 2})
\overset{d^{\SW}_2}{\longrightarrow} F(H)
\overset{d^{\SW}_1}{\longrightarrow} F(k) 
\end{equation}
associated to the simplicial commutative cocommutative Hopf algebra $F( \Gamma_*(H))$:
\begin{equation*}
\H^{\SW}_*(H) = H_* \bigl( F\bigl( \Gamma_*(H) \bigr), d^{\SW}_*\bigr) \, .
\end{equation*}
\end{definition}

The homology groups $\H^{\SW}_*(H)$ are commutative cocommutative Hopf algebras.

Let us explicitly compute the low-degree differentials
$d^{\SW}_1$, $d^{\SW}_2$, $d^{\SW}_3$ in the complex~\eqref{complex}.

\begin{lemma}\label{d2-cocomm}
For $x,y,z \in H$, we have
$d^{\SW}_1 \bigl(t(x) \bigr) = \varepsilon(x)$ and
\begin{equation*}\label{d2b}
d^{\SW}_2 \bigl( t(x \otimes y) \bigr) = 
t(y_1) \, {t}^{-1}(x_1 y_2) \, t(x_2) \, ,
\end{equation*}
\begin{equation*}\label{d3b}
d^{\SW}_3 \bigl( t(x \otimes y \otimes z) \bigr) = 
t(y_1 \otimes z_1) \, {t}^{-1}(x_1 y_2 \otimes z_2) \, 
t(x_2 \otimes y_3 z_3) \, {t}^{-1}(x_3 \otimes y_4)\, .
\end{equation*}
\end{lemma}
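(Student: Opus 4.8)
The plan is to unwind the definition of the chain-complex differential associated to the simplicial object $F(\Gamma_*(H))$ and then to feed in the explicit face operators~\eqref{face}. Recall that for a simplicial object in an abelian category the boundary map is the alternating sum of the face operators; here the relevant abelian category is that of commutative cocommutative Hopf algebras, whose morphism groups carry the convolution product~\eqref{convol} as their additive structure, with neutral element $\eta\eps$ and with inversion given by post-composition with the antipode. Hence the differential is the alternating \emph{convolution} product
\begin{equation*}
d^{\SW}_n = F(\partial_0) * F(\partial_1)^{-1} * F(\partial_2) * \cdots * F(\partial_n)^{(-1)^n} ,
\end{equation*}
where each $F(\partial_i)^{-1}$ denotes the convolution inverse of the Hopf algebra morphism $F(\partial_i) : F(\Gamma_n) \to F(\Gamma_{n-1})$, and where on a generator $t(w)$ the product of the factors is taken in the commutative algebra $F(\Gamma_{n-1})$.

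First I would record two elementary facts. Since $t : \Gamma_n \to F(\Gamma_n)$ is the canonical coalgebra morphism and $F$ is functorial, each face map satisfies $F(\partial_i)(t(w)) = t(\partial_i(w))$ for $w \in \Gamma_n$. Moreover the convolution inverse of a Hopf algebra morphism into the commutative Hopf algebra $F(\Gamma_{n-1})$ is obtained by post-composing with the antipode $S$ (one checks $\phi * (S\circ\phi) = \eta\eps$ using that $\phi$ is a coalgebra map), so by~\eqref{Hopf-on-Takeuchi} we get $F(\partial_i)^{-1}(t(w)) = S\bigl(t(\partial_i(w))\bigr) = {t}^{-1}(\partial_i(w))$. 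Thus, evaluating $d^{\SW}_n$ on $t(w)$ amounts to replacing $w$ by its $n$-fold coproduct and applying $t \circ \partial_i$ or ${t}^{-1} \circ \partial_i$ according to the parity of~$i$; here $\Delta^{(n)}(t(w)) = t(w_1) \otimes \cdots \otimes t(w_{n+1})$ together with the fact that the coproduct of $\Gamma_n = H^{\otimes n}$ is the tensor-product coproduct of~$H$ reduces everything to bookkeeping in Heyneman--Sweedler notation.

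Then I would simply substitute. For $d^{\SW}_1$ both face maps $\partial_0, \partial_1 : H \to k$ equal $\eps$, so $F(\partial_0) = F(\partial_1)$ and $d^{\SW}_1 = F(\partial_0) * F(\partial_0)^{-1} = \eta\eps$, giving $d^{\SW}_1(t(x)) = \eps(x)$. For $d^{\SW}_2$ I would expand $d^{\SW}_2(t(x \otimes y))$ along the two-fold coproduct $(x_1 \otimes y_1)\otimes(x_2 \otimes y_2)\otimes(x_3 \otimes y_3)$ of $x \otimes y$, obtaining
\begin{equation*}
t\bigl(\eps(x_1)\,y_1\bigr)\,{t}^{-1}(x_2 y_2)\,t\bigl(x_3\,\eps(y_3)\bigr),
\end{equation*}
and then collapse $\eps(x_1)$ and $\eps(y_3)$ via the counit axiom to reach $t(y_1)\,{t}^{-1}(x_1 y_2)\,t(x_2)$. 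The computation of $d^{\SW}_3$ is identical in spirit: expanding along the three-fold coproduct and applying $\partial_0,\partial_1,\partial_2,\partial_3$ with the correct alternation gives
\begin{equation*}
t\bigl(\eps(x_1)(y_1 \otimes z_1)\bigr)\,{t}^{-1}(x_2 y_2 \otimes z_2)\,t(x_3 \otimes y_3 z_3)\,{t}^{-1}\bigl(x_4 \otimes y_4\,\eps(z_4)\bigr),
\end{equation*}
and collapsing $\eps(x_1)$ and $\eps(z_4)$ yields the stated formula.

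The only genuine subtlety, and the step over which I would take most care, is the first one: justifying that the chain-complex differential of Definition~\ref{HS-hom} is precisely the alternating convolution product, and that convolution inversion coincides with post-composition by the antipode. Once this bridge between the abelian-category formalism and the explicit Takeuchi generators is in place, the remainder is the routine Sweedler-notation bookkeeping sketched above, the commutativity of $F(\Gamma_{n-1})$ ensuring that the factors may be written in any order.
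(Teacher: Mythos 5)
Your proof is correct and takes essentially the same route as the paper's: the paper likewise takes $d^{\SW}_n = \partial_0 * \partial_1^{-1} * \partial_2 * \cdots$ (the alternating convolution product of the induced face maps) and evaluates it on generators $t(w)$ via the iterated coproduct and the counit axiom, yielding exactly your intermediate expressions before collapsing $\eps(x_1)$, $\eps(y_3)$, $\eps(z_4)$. Your explicit justification that convolution inversion is post-composition with the antipode, so that $F(\partial_i)^{-1}\bigl(t(w)\bigr) = {t}^{-1}\bigl(\partial_i(w)\bigr)$, merely spells out a step the paper leaves implicit.
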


Since $H$ is assumed to be cocommutative, 
$d^{\SW}_2$ and $d^{\SW}_3$ 
can be reformulated as follows:
\begin{equation}\label{d2a}
d^{\SW}_2 \bigl( t(x \otimes y) \bigr) = 
t(x_1) \, t(y_1) \, {t}^{-1}(x_2 y_2)
\end{equation}
and 
\begin{equation}\label{d3a}
d^{\SW}_3 \bigl( t(x \otimes y \otimes z) \bigr) = 
t(y_1\otimes z_1) \, t(x_1 \otimes y_2 z_2) \, 
{t}^{-1}(x_2y_3 \otimes z_3) \, {t}^{-1}(x_3 \otimes y_4) \, .
\end{equation}
Equations \eqref{d2a} and~\eqref{d3a} will be used in the proofs of
Propositions~\ref{HS1=HL1} and~\ref{HL2HS2}, respectively.

\begin{proof}
The map $d^{\SW}_1$ in $\Hopf(F(H), F(k))$ is defined by
$d^{\SW}_1 = \partial_0 * \partial_1^{-1}$. 
Therefore by~\eqref{face},
\begin{equation*}
d^{\SW}_1 \bigl(t(x) \bigr)
= \partial_0(x_1) \, \partial_1^{-1}(x_2)
= \varepsilon(x_1) \, \varepsilon(x_2) = \varepsilon(x) \, .
\end{equation*}

The map $d^{\SW}_2$ in $\Hopf(F(H^{\otimes 2}),F(H))$ is given by
\begin{equation*}
d^{\SW}_2 = \partial_0 * \partial_1^{-1} * \partial_2 \, .
\end{equation*}
By~\eqref{face},
\begin{align*}
d^{\SW}_2 \bigl( t(x \otimes y) \bigr)
& = t\bigl( \partial_0(x_1\otimes y_1) \bigr) \, 
{t}^{-1}\bigl( \partial_1(x_2 \otimes y_2) \bigr) \, 
t\bigl(\partial_2(x_3\otimes y_3) \bigr) \\
& = t \bigl(\varepsilon(x_1) \, y_1 \bigr) \, {t}^{-1}(x_2y_2) \, 
t \bigl(x_3 \, \varepsilon(y_3) \bigr) \\
& = t(y_1) \, {t}^{-1}\bigl(\varepsilon(x_1) \, x_2y_2 \, \varepsilon(y_3)\bigr) \, t(x_3) \\
& = t(y_1) \, {t}^{-1}(x_1 y_2) \, t(x_2) \, .
\end{align*}

Finally, the map $d^{\SW}_3$ in $\Hopf(F(H^{\otimes 3}),F(H^{\otimes 2}))$ is given by
\begin{equation*}
d^{\SW}_3 = \partial_0 * \partial_1^{-1} * \partial_2 * \partial_3^{-1}\, .
\end{equation*}
Again by~\eqref{face},
\begin{align*}
d^{\SW}_3 \bigl( t(x \otimes y \otimes z) \bigr)
& = t\bigl( \partial_0(x_1\otimes y_1\otimes z_1) \bigr) \, 
{t}^{-1}\bigl( \partial_1(x_2 \otimes y_2\otimes z_2) \bigr) \, \\
&  \hskip 45pt \times
t\bigl(\partial_2(x_3\otimes y_3\otimes z_3) \bigr) \,  
{t}^{-1}\bigl( \partial_3(x_4 \otimes y_4\otimes z_4) \bigr) \\
& = t \bigl(\varepsilon(x_1) \, y_1 \otimes z_1\bigr) \, {t}^{-1}(x_2y_2\otimes z_2) \\
&  \hskip 45pt \times
t(x_3\otimes y_3z_3) \, {t}^{-1} \bigl(x_4 \otimes y_4\, \varepsilon(z_4) \bigr) \\
& = t(y_1 \otimes z_1) \, {t}^{-1}(x_1 y_2 \otimes z_2) \, 
t(x_2 \otimes y_3 z_3) \, {t}^{-1}(x_3 \otimes y_4)\, .
\end{align*}
\end{proof}

The Sweedler homology of the Hopf algebra of a group is computed as follows.

\begin{proposition}\label{HSkG}
For any group~$G$ we have
\begin{equation*}
\H^{\SW}_*(k[G]) \cong k[H_*(G,\ZZ)]\, ,
\end{equation*}
where $H_*(G,\ZZ)$ is the homology of~$G$ with integral coefficients.
\end{proposition}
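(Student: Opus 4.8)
The plan is to apply the functor~$F$ to the simplicial coalgebra~$\Gamma_*(k[G])$ and identify the resulting chain complex of commutative cocommutative Hopf algebras with the group algebra of the standard bar complex computing~$H_*(G,\ZZ)$. The key observation is that~$F$ transforms group-like structure into free-abelian-group structure: as noted at the end of Section~\ref{Takeuchi}, for any group~$\Lambda$ one has $F(k[\Lambda]) \cong k[\ZZ[\Lambda]]$. Since $\Gamma_n(k[G]) = k[G]^{\otimes n} = k[G^n]$ is itself the group algebra of the set~$G^n$ (viewed as a basis), we get $F(\Gamma_n(k[G])) \cong k[\ZZ[G^n]]$, which is exactly the group algebra on the degree-$n$ part of the (nonhomogeneous) bar resolution of~$G$.

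First I would check that the simplicial structure matches. The face operators~$\partial_i$ of~\eqref{face}, applied to group-like elements $g^1\otimes\cdots\otimes g^n \in k[G^n]$, reduce (because $\varepsilon(g)=1$ for $g\in G$) to precisely the face maps of the bar complex: the $i=0$ and $i=n$ faces drop the outer entries and the middle faces multiply adjacent group elements $g^ig^{i+1}$. Hence $\Gamma_*(k[G])$ is, on group-likes, the simplicial set underlying the bar construction, and $F(\Gamma_*(k[G]))$ is the degreewise group algebra $k[\ZZ[G^*]]$ of the associated simplicial abelian group $\ZZ[G^*]$. The differentials~$d^{\SW}_n$ are built as alternating convolution products $\partial_0 * \partial_1^{-1} * \partial_2 * \cdots$; under the identification $F(k[\Lambda])\cong k[\ZZ[\Lambda]]$, convolution of group-likes corresponds to addition in~$\ZZ[\Lambda]$ and convolution inverse to negation. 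Thus $d^{\SW}_n$ becomes the group algebra of the alternating sum $\sum_i (-1)^i \partial_i$, i.e.\ the group algebra~$k[\, \cdot\, ]$ applied to the standard differential of the normalized (or unnormalized) bar complex computing~$H_*(G,\ZZ)$.

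The remaining point is to commute the functor $\Lambda \mapsto k[\ZZ[\Lambda]]$ past the formation of homology. Here I would invoke that taking homology of a simplicial abelian group is the same as taking homology of its associated chain complex, and that the functor $F$ (equivalently $A \mapsto k[A]$ on abelian groups) sends kernels and quotients in the abelian category of commutative cocommutative Hopf algebras to the corresponding constructions: the Hopf kernel~\eqref{Hopfkernel} of $k[u]$ for a group homomorphism~$u$ is $k[\Ker u]$ by~\eqref{HKerG}, and the Hopf quotient~\eqref{quotient} satisfies $k[A]\quot k[A_0]\cong k[A/A_0]$ by~\eqref{quotientG}. Since homology in degree~$n$ is computed as such a kernel modulo image, we obtain $\H^{\SW}_n(k[G]) = H_n\bigl(F(\Gamma_*(k[G])),d^{\SW}\bigr) \cong k\bigl[H_n(\ZZ[G^*])\bigr] = k[H_n(G,\ZZ)]$, which is the desired isomorphism, compatible with the Hopf structure.

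I expect the main obstacle to be the bookkeeping that the functor $A\mapsto k[A]$ genuinely commutes with homology in the abelian category of commutative cocommutative Hopf algebras—that is, verifying that $k[\ZZ[G^*]]$ forms a chain complex in that category whose Hopf-algebraic homology agrees with $k[-]$ applied to the ordinary homology of the integral bar complex. This requires checking that the image and kernel Hopf algebras appearing in the definition of~$\H^{\SW}_*$ are exactly the group algebras on the corresponding image and kernel subgroups of $\ZZ[G^*]$, using the normality and exactness facts recorded in Section~\ref{HKer}; once the functor $k[-]$ is seen to be exact on the relevant diagrams, the identification is forced.
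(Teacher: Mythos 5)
Your proposal is correct and follows essentially the same route as the paper's own proof: identify $F(\Gamma_*(k[G]))$ degreewise with $k[\ZZ[G^n]]$, the $k$-linearization of the unnormalized bar complex computing $H_*(G,\ZZ)$, and then commute $k[-]$ past the formation of homology using \eqref{HKerG} and \eqref{quotientG}. The only difference is one of detail: the paper leaves implicit the points you spell out (the matching of face maps on group-likes, convolution becoming the alternating sum of faces, and the image/kernel bookkeeping), all of which check out.
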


\begin{proof}
For $H=k[G]$ the simplicial abelian group~$C_*(H)$ is 
the one underlying the unnormalized bar $k$-resolution of~$G$.
Tensoring it by~$k \otimes_{k[G]}-$, we obtain the simplicial object $\Gamma_*(H)$,
whose associated chain complex homology has~$H_*(G,k)$ as homology.
Applying the functor $F$, we obtain a simplicial commutative Hopf algebra, 
whose component in degree~$n$ is the group algebra $k[\ZZ[G^n]]$.
We thus obtain the $k$-linearization of the simplicial complex whose homology is~$H_*(G,\ZZ)$.
We conclude using~\eqref{HKerG} and~\eqref{quotientG}.
\end{proof}

When $H$ is an arbitrary Hopf algebra, we do not know how to use directly the 
simplicial commutative Hopf algebra~$F( \Gamma_*(H))$ 
in order to build up something that looks like a chain complex.
In the subsequent sections we shall carefully modify the low-degree terms 
of~$F(\Gamma_*(H))$ in a manner that will be suitable 
for defining what we shall call lazy homology. 
More precisely, in order to define lazy homology
we predualize the laziness conditions at the level 
of the coalgebras~$H^{\otimes q}$ ($q= 1,2$) by taking appropriate quotients
(see Section~\ref{predual}), apply the functor~$F$, 
and mimic the construction of a complex out of a simplicial object
(Sections~\ref{lazyhom1} and~\ref{lazyhom2}).

\begin{rem}
Let $H$ be a cocommutative Hopf algebra and $R$ a commutative algebra.
We turn~$R$ into an $H$-module algebra by assuming that
$H$ acts \emph{trivially} on~$R$ via the counit~$\eps$. It is easy to check that
under these hypotheses there is an isomorphism
$$\Reg_H(C_*(H),R) \cong \Reg(\Gamma_*(H),R)$$
of cosimplicial abelian groups. 
In view of Proposition~\ref{Alg-Reg}, this leads to an isomorphism 
$$\Reg_H(C_*(H),R) \cong \Alg\bigl( F(\Gamma_*(H)),R \bigr)$$
of cosimplicial abelian groups, 
relating the cochain complex whose cohomology is~$\H_{\SW}^*(H,R)$
and the chain complex~\eqref{complex} whose homology is~$\H^{\SW}_*(H)$.
From this it is easy to deduce natural homomorphisms
\begin{equation}\label{hocosw}
\H_{\SW}^i(H,R) \to \Alg(\H^{\SW}_i(H),R)
\end{equation}
for all $i\geq 0$.
When the ground field $k$ is algebraically closed and $R=k$, 
one can use Sections~\ref{HKer}--\ref{exact-seq} and~\cite{ta72}
to show that the homomorphisms~\eqref{hocosw} are isomorphisms:
$$\H_{\SW}^i(H,k) \cong \Alg(\H^{\SW}_i(H),k)$$
for all $i\geq 0$. When $H= k[G]$ is a group algebra,
these isomorphisms reduce to the well-known isomorphisms 
$$H^i(G,k^{\times}) \cong \Hom(H_i(G, \ZZ),k^{\times})$$
in group cohomology (the group~$k^{\times}$ is divisible under the condition above).
\end{rem}

\subsection{Addendum: Homology with coefficients}\label{coefficients}

The content of this section will not be needed in the sequel. 
The construction of the simplicial coalgebra~$\Gamma_*(H)$
of Section~\ref{Sw-homology} is a special case of the following
construction.

Start again from a Hopf algebra~$H$ and the associated
simplicial coalgebra $C_*(H)$ defined in Section~\ref{simplicial}.
Let $C$ be a right $H$-module coalgebra.
In each degree~$n$ let us consider the tensor product
$$C \otimes_H  C_n(H) = C \otimes_H H^{\otimes n+1}\, ;$$
it is isomorphic to
$$\Gamma_n(H,C) = C \otimes H^{\otimes n}$$
under the map 
$x^0 \otimes x^1 \otimes \cdots \otimes x^n \mapsto 
x^0 x^1 \otimes \cdots \otimes x^n$, where $x^0 \in C$
and $x^1, \ldots, x^n \in H$.
The face and degeneracy operators of~$C_*(H)$ induce
operators
$\partial_i : C \otimes H^{\otimes n} \to C \otimes H^{\otimes n-1}$ 
and 
$\sigma_i : C \otimes H^{\otimes n} \to C \otimes H^{\otimes n+1}$,
which are given for $x^0Ê\in C$ and $x^1, \ldots, x^n \in H$ by
\begin{equation*}
\partial_i(x^0 \otimes x^1 \otimes \cdots \otimes x^n) = 
\begin{cases}
x^0 x^1 \otimes x^2 \otimes \cdots \otimes x^n 
& \text{for $i=0$} \, ,\\
x^0 \otimes x^1 \otimes \cdots \otimes x^i x^{i+1} \otimes \cdots \otimes x^n & 
\text{for $0 < i < n$} \, ,\\
x^0 \otimes x^1 \otimes \cdots \otimes x^{n-1} \, \varepsilon(x^n) & 
\text{for $i=n$} \, ,
\end{cases}
\end{equation*}
and 
\begin{equation*}
\sigma_i(x^0 \otimes x^1 \otimes \cdots \otimes x^n) =
x^0 \otimes  \cdots \otimes x^i \otimes 1 \otimes \cdots \otimes x^n 
\quad \text{for $i=0,1, \ldots , n$} \, .
\end{equation*}
It is easy to check that these operators are coalgebra morphisms.
Therefore, $\Gamma_*(H,C)$ is a simplicial coalgebra.
As in Section~\ref{Sw-homology}, we can apply the
functor $F$ to it. The resulting object 
$$F\bigl( \Gamma_*(H,C) \bigr)$$
is a simplicial commutative Hopf algebra.

If in addition $H$ and $C$ are cocommutative, then $\Gamma_*(H,C)$
is a simplicial cocommutative coalgebra and therefore
$F(\Gamma_*(H,C))$ is a simplicial object in the abelian category
of commutative cocommutative Hopf algebras. 
We define the \emph{Sweedler homology $\H^{\SW}_*(H,C)$ of $H$ 
with coefficients in the $H$-module coalgebra~$C$} 
as the homology of the chain complex associated 
to~$F(\Gamma_*(H,C))$:
\begin{equation}
\H^{\SW}_*(H,C) = H_* \bigl( F\bigl( \Gamma_*(H,C) \bigr)\bigr) \, .
\end{equation}
The resulting homology groups are 
\emph{commutative cocommutative Hopf algebras} over~$k$.

When $C=k$ is the trivial $H$-module coalgebra, then
we recover the homology groups $\H^{\SW}_*(H)$ 
of Definition~\ref{HS-hom}:
$$\H^{\SW}_*(H,k) = \H^{\SW}_*(H)\, .$$

\section{Predualization of Lazy Cocycles}\label{predual}

In this section we perform some preliminary basic 
constructions necessary for defining
the lazy homology of a Hopf algebra. The idea is to ``predualize"
the laziness conditions of Section~\ref{lazycohom}.
 
\subsection{The lazy quotient~$C^{[1]}$}\label{lazy-quot1}
Given a coalgebra~$C$, we denote by~$C^{[1]}$  
the quotient of~$C$ by the subspace spanned by the elements
\begin{equation*}
\varphi(x_1)\, x_2-\varphi(x_2)Ê\, x_1 \, ,
\end{equation*} 
for all $x \in C$ and $\varphi \in C^* = \Hom(C,k)$.
The class of $x \in C$ in $C^{[1]}$ will be denoted by~$\underline{x}$.

\begin{proposition}\label{C1coalgebra}
There is a unique coalgebra structure on $C^{[1]}$ such that the projection $C \to C^{[1]}$
is a coalgebra morphism.
Furthermore, we have the 
following ``strong-cocommutativity'' identities:
\begin{equation}\label{strongcomm}
\underline{x_1} \otimes x_2 = \underline{x_2} \otimes x_1 \in C^{[1]} \otimes C
\end{equation}
for all $x\in C$.
\end{proposition}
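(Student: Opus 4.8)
The plan is to construct the coproduct on $C^{[1]}$ by descent from the coproduct on $C$, so the heart of the matter is a single well-definedness check. Write $p: C \to C^{[1]}$ for the projection and $N \subset C$ for the subspace spanned by the elements $\varphi(x_1)\, x_2 - \varphi(x_2)\, x_1$ over all $x \in C$ and $\varphi \in C^*$. I want a coproduct $\overline{\Delta}: C^{[1]} \to C^{[1]} \otimes C^{[1]}$ making $p$ a coalgebra morphism, which forces $\overline{\Delta}(\underline{x}) = \underline{x_1} \otimes \underline{x_2}$; the only thing to verify is that the composite $(p \otimes p) \circ \Delta: C \to C^{[1]} \otimes C^{[1]}$ kills $N$, for then it factors uniquely through $p$. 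The counit descends trivially since $\varepsilon(\varphi(x_1)\, x_2 - \varphi(x_2)\, x_1) = \varphi(x_1)\varepsilon(x_2) - \varphi(x_2)\varepsilon(x_1) = \varphi(x) - \varphi(x) = 0$, so $\overline{\varepsilon} := \varepsilon \circ (\text{section})$ is well defined, and coassociativity and counitality of $\overline{\Delta}$ are then automatic because they hold in $C$ and $p \otimes p \otimes p$ (resp.\ $p$) is surjective.

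First I would carry out the well-definedness check, which I expect to be the main obstacle. Applying $(p \otimes p)\circ \Delta$ to a generator $\varphi(x_1)\, x_2 - \varphi(x_2)\, x_1$ and using coassociativity to write $\Delta(x_2) = x_2 \otimes x_3$, I get
\begin{equation*}
\varphi(x_1)\, \underline{x_2} \otimes \underline{x_3} - \varphi(x_2)\, \underline{x_1} \otimes \underline{x_3} \, .
\end{equation*}
Applying $\varphi$ against the first tensorand internally, the first summand is $\underline{\varphi(x_1)\, x_2} \otimes \underline{x_3}$ and the second is $\underline{\varphi(x_2)\, x_1} \otimes \underline{x_3}$, and these agree in $C^{[1]} \otimes C^{[1]}$ precisely because $\varphi(x_1)\, x_2 - \varphi(x_2)\, x_1 \in N = \Ker p$ for the element with first-factor coproduct $x_1 \otimes x_2$ (here one reindexes: the relation defining $N$ applied to $x_{(1)}$ with the same $\varphi$ shows the first tensor slots are equal modulo $N$). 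The care needed is purely in the Sweedler bookkeeping, tracking which $\varphi$ pairs with which leg; the key structural input is that the defining relations of $N$ are stable under the coproduct in the above sense.

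Second I would prove the strong-cocommutativity identity~\eqref{strongcomm}. This is essentially built into the definition: for every $\varphi \in C^*$ the element $\varphi(x_1)\, x_2 - \varphi(x_2)\, x_1$ lies in $N$, hence $\varphi(x_1)\, \underline{x_2} = \varphi(x_2)\, \underline{x_1}$ in $C^{[1]}$ for all $\varphi$. Since this holds for every linear functional $\varphi$, and the maps $z \mapsto \underline{x_1}\otimes x_2$ and $z \mapsto \underline{x_2}\otimes x_1$ into $C^{[1]} \otimes C$ are determined by their pairings $(\id \otimes \varphi)$ against all $\varphi \in C^*$, the two tensors $\underline{x_1} \otimes x_2$ and $\underline{x_2} \otimes x_1$ coincide in $C^{[1]} \otimes C$; to make this rigorous I would either pass to a basis of $C$ or use that elements of $C^{[1]} \otimes C$ are separated by the functionals $\id_{C^{[1]}} \otimes \varphi$. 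This completes both assertions, and no further compatibility (antipode, algebra structure) is claimed at this stage.
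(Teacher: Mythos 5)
Your overall route is the paper's: descend $\Delta$ and $\varepsilon$ along the projection $p$ by checking that they kill the generators of $N$, deduce coassociativity and counitality from the surjectivity of $p$, and obtain \eqref{strongcomm} by pairing with all $\varphi \in C^*$. Your treatment of the counit and of \eqref{strongcomm} is correct and coincides with the paper's, which likewise applies $\id \otimes \varphi$ and uses that these functionals separate the points of $C^{[1]} \otimes C$.

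However, the central well-definedness computation contains a Sweedler-index error which, as written, makes the key step fail. With $\Delta^{(2)}(x) = x_1 \otimes x_2 \otimes x_3$ one has
\begin{equation*}
(p \otimes p)\Delta\bigl(\varphi(x_2)\, x_1\bigr) = \varphi(x_3)\, \underline{x_1} \otimes \underline{x_2}\, ,
\end{equation*}
not $\varphi(x_2)\, \underline{x_1} \otimes \underline{x_3}$: the functional was attached to the \emph{second} leg of $\Delta(x)$, and it is the first leg that gets comultiplied, so $\varphi$ must land on the third leg. Consequently the element you must show vanishes is $\varphi(x_1)\, \underline{x_2} \otimes \underline{x_3} - \varphi(x_3)\, \underline{x_1} \otimes \underline{x_2}$, and your single application of the defining relation (along legs $1$--$2$, with $\underline{x_3}$ as spectator) only yields $\varphi(x_1)\, \underline{x_2} \otimes \underline{x_3} = \varphi(x_2)\, \underline{x_1} \otimes \underline{x_3}$, which does not finish. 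You need a second application of the relation, along legs $2$--$3$ with $\underline{x_1}$ as spectator, giving $\varphi(x_2)\, \underline{x_1} \otimes \underline{x_3} = \varphi(x_3)\, \underline{x_1} \otimes \underline{x_2}$. This two-step chain is exactly the paper's computation
\begin{equation*}
\varphi(x_1)\, \underline{x_2} \otimes \underline{x_3}
= \varphi(x_2)\, \underline{x_1} \otimes \underline{x_3}
= \underline{x_1} \otimes \varphi(x_2)\, \underline{x_3}
= \underline{x_1} \otimes \underline{x_2}\, \varphi(x_3)\, .
\end{equation*}
The repair costs one line and uses only the tool you already invoked, so your argument is salvageable; but as submitted, the cancellation is carried out against a term that is not the image of the generator, precisely at the spot you yourself identified as the heart of the matter.
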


By definition,
the coproduct $\Delta$ and the counit $\varepsilon$ of~$C^{[1]}$
are given for all $x\in C$ by
\begin{equation*}
\Delta(\underline{x}) = \underline{x_1} \otimes \underline{x_2}  \quad\text{and}\quad
\varepsilon(\underline{x}) = \varepsilon(x)
\end{equation*}
for all $x\in C$.

\begin{proof}
 The coproduct of $C$ induces a linear map
$\Delta' : C \to C^{[1]} \otimes C^{[1]}$ defined by
$\Delta'(x) =  \underline{x_1} \otimes \underline{x_2}$ ($x\in C$).
For $x \in C$ and $\varphi \in C^*$, 
\begin{eqnarray*}
\Delta'(\varphi(x_1) \, x_2) & = & \varphi(x_1) \, \underline{x_2} \otimes \underline{x_3}
=  \varphi(x_2) \, \underline{x_1} \otimes \underline{x_3} \\
& = &  \underline{x_1} \otimes  \varphi(x_2) \, \underline{x_3}
=  \underline{x_1} \otimes \underline{x_2} \, \varphi(x_3) \\
& = & \Delta'(\varphi(x_2) \, x_1) \, .
\end{eqnarray*}
Thus $\Delta'$ induces a linear map $\Delta : C^{[1]} \to C^{[1]} \otimes C^{[1]}$
such that $\Delta(\underline{x}) = \underline{x_1} \otimes \underline{x_2} $ 
for all $x\in C$.
The map~$\Delta$ is clearly coassociative. 
The counit of~$C^{[1]}$ is defined similarly and 
we obtain the desired coalgebra structure on~$C^{[1]}$. 
For $\varphi \in C^*$ and $x \in C$, we have
$$(\id \otimes \, \varphi) (\underline{x_1} \otimes x_2) = 
\varphi(x_2) \, \underline{x_1}= \varphi(x_1) \, \underline{x_2}
= (\id \otimes \, \varphi) (\underline{x_2} \otimes x_1) \, .$$
This yields~\eqref{strongcomm}.
\end{proof}

\begin{corollary}\label{C1cocomm}
The coalgebra $C^{[1]}$ is cocommutative.
\end{corollary}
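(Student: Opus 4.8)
The plan is to derive cocommutativity of $C^{[1]}$ directly from the ``strong-cocommutativity'' identity~\eqref{strongcomm} established in Proposition~\ref{C1coalgebra}. The key observation is that \eqref{strongcomm} is an equality in $C^{[1]} \otimes C$, and cocommutativity is an equality in $C^{[1]} \otimes C^{[1]}$; so the natural approach is to apply the projection $C \to C^{[1]}$ to the second tensor factor of \eqref{strongcomm}.

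Concretely, I would proceed as follows. Let $p : C \to C^{[1]}$ denote the canonical projection, $x \mapsto \underline{x}$, which is a coalgebra morphism by Proposition~\ref{C1coalgebra}. Starting from the identity \eqref{strongcomm},
\[
\underline{x_1} \otimes x_2 = \underline{x_2} \otimes x_1 \in C^{[1]} \otimes C \, ,
\]
I apply the linear map $\id_{C^{[1]}} \otimes \, p$ to both sides. On the left this produces $\underline{x_1} \otimes \underline{x_2}$, and on the right it produces $\underline{x_2} \otimes \underline{x_1}$, so that
\[
\underline{x_1} \otimes \underline{x_2} = \underline{x_2} \otimes \underline{x_1} \in C^{[1]} \otimes C^{[1]}
\]
for all $x \in C$. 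By the formula for the coproduct of~$C^{[1]}$ recorded just after Proposition~\ref{C1coalgebra}, namely $\Delta(\underline{x}) = \underline{x_1} \otimes \underline{x_2}$, the left-hand side is exactly $\Delta(\underline{x})$ and the right-hand side is $\tau \circ \Delta(\underline{x})$, where $\tau$ is the flip. Since every element of $C^{[1]}$ is of the form $\underline{x}$ for some $x \in C$ (the projection $p$ is surjective), this shows $\Delta = \tau \circ \Delta$, which is the definition of cocommutativity.

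There is essentially no obstacle here: the corollary is an immediate formal consequence of the strong-cocommutativity identity once one recognizes that applying the projection in the right-hand factor converts the mixed identity in $C^{[1]} \otimes C$ into the genuine cocommutativity identity in $C^{[1]} \otimes C^{[1]}$. The only point requiring any care is making sure that $\id \otimes \, p$ is applied to the correct tensor slot and that the Heyneman--Sweedler indices are tracked consistently through the projection; since $p$ is a coalgebra morphism, applying it does not disturb the coproduct structure, and the computation is routine.
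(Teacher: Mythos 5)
Your proof is correct and is exactly the argument the paper intends: the paper derives the corollary as an immediate consequence of the strong-cocommutativity identity~\eqref{strongcomm} from Proposition~\ref{C1coalgebra}, and your application of $\id_{C^{[1]}} \otimes \, p$ to the second tensor factor is precisely how that deduction works. Nothing is missing, so no further changes are needed.
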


\begin{proof}
This is a consequence of the second assertion in Proposition~\ref{C1coalgebra}.
\end{proof}

\begin{definition}
The coalgebra~$C^{[1]}$ is called the lazy quotient of~$C$.
\end{definition}

Observe that the coalgebra $C^{[1]} = C$ if $C$ is cocommutative.
The definition of the  lazy quotient
is motivated by the following result. 

\begin{proposition}\label{C1motivation}
Let $C$ be a coalgebra and $R$ a commutative algebra.
For any~$\varphi \in \Reg_{\ell}(C,R)$
there is a unique element $\underline{\varphi} \in \Reg(C^{[1]},R)$
such that $\underline{\varphi}(\underline{x}) = \varphi(x)$ for all $x \in C$. 
There are group isomorphisms
\begin{equation*}
\Reg_{\ell}(C,R) \cong \Reg(C^{[1]},R) \cong \Alg(F(C^{[1]}),R) \, .
\end{equation*}
\end{proposition}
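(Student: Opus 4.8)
The plan is to establish the two isomorphisms separately, with the second isomorphism $\Reg(C^{[1]},R) \cong \Alg(F(C^{[1]}),R)$ being an immediate consequence of Proposition~\ref{Alg-Reg} applied to the coalgebra $C^{[1]}$. So the real content is the first isomorphism $\Reg_{\ell}(C,R) \cong \Reg(C^{[1]},R)$, together with the existence and uniqueness of the factored map $\underline{\varphi}$. First I would construct a map $\Reg(C^{[1]},R) \to \Reg_{\ell}(C,R)$ by pullback along the canonical projection $p : C \to C^{[1]}$, which is a coalgebra morphism by Proposition~\ref{C1coalgebra}. Precomposition with a coalgebra morphism sends $\Reg(C^{[1]},R)$ into $\Reg(C,R)$ (convolution inverses pull back to convolution inverses since $p$ respects coproducts and counits), so it remains to check that the image actually lands in the lazy subgroup.

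The key verification is that for any $\psi \in \Reg(C^{[1]},R)$, the pullback $\psi \circ p$ satisfies the laziness condition~\eqref{lazy1}. This follows directly from the strong-cocommutativity identity~\eqref{strongcomm}: applying $\psi \otimes \id_C$ to both sides of $\underline{x_1} \otimes x_2 = \underline{x_2} \otimes x_1$ gives
\begin{equation*}
\psi(\underline{x_1}) \otimes x_2 = \psi(\underline{x_2}) \otimes x_1 \in R \otimes C \, ,
\end{equation*}
which is exactly the statement that $\psi \circ p = (\psi(\underline{x}))_{x}$ is lazy. Next I would construct the inverse map. Given $\varphi \in \Reg_{\ell}(C,R)$, I claim $\varphi$ factors through $p$, i.e.\ $\varphi$ vanishes on the defining subspace of $C^{[1]}$ spanned by the elements $\phi(x_1)\,x_2 - \phi(x_2)\,x_1$ for $\phi \in C^*$. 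Indeed, applying $\phi$ to the first tensorand of the laziness identity $\varphi(x_1)\otimes x_2 = \varphi(x_2)\otimes x_1$ in $R \otimes C$ and then $\varphi$ would mix the roles; cleaner is to apply $\id_R \otimes \phi$ to the laziness equation, yielding $\varphi(x_1)\,\phi(x_2) = \varphi(x_2)\,\phi(x_1)$ in $R$, whence $\varphi\bigl(\phi(x_1)x_2 - \phi(x_2)x_1\bigr) = 0$. This gives a well-defined linear map $\underline{\varphi} : C^{[1]} \to R$ with $\underline{\varphi}(\underline{x}) = \varphi(x)$, and uniqueness is automatic since the $\underline{x}$ span $C^{[1]}$.

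Finally I would confirm that $\underline{\varphi}$ is convolution invertible, so that it lands in $\Reg(C^{[1]},R)$: since $\varphi$ is invertible in $\Reg(C,R)$ and one checks its inverse $\varphi^{-1}$ is again lazy (the lazy elements form a subgroup by definition~\eqref{lazy1}, as recorded in Section~\ref{lazycohom}), the induced map $\underline{\varphi^{-1}}$ serves as a convolution inverse to $\underline{\varphi}$, using that the coproduct of $C^{[1]}$ is induced from that of $C$. The two assignments $\psi \mapsto \psi \circ p$ and $\varphi \mapsto \underline{\varphi}$ are then visibly mutually inverse, and both are group homomorphisms because $p$ is a coalgebra morphism (so pullback respects convolution) and because $\underline{\varphi \ast \varphi'} = \underline{\varphi} \ast \underline{\varphi'}$ follows from $\Delta(\underline{x}) = \underline{x_1}\otimes\underline{x_2}$. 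I expect the main obstacle to be purely bookkeeping rather than conceptual: one must be careful that laziness is genuinely what is needed for the factorization (the identity lives in $R \otimes C$, and one extracts the scalar relation in $R$ by pairing against an arbitrary $\phi \in C^*$), and that invertibility transfers correctly through the quotient. Since everything is driven by the strong-cocommutativity identity~\eqref{strongcomm} and the universal properties already established, no genuinely hard step arises.
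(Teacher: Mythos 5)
Your proof is correct and follows essentially the same route as the paper: you factor a lazy $\varphi$ through $C^{[1]}$ by pairing the laziness identity against $\phi \in C^*$, use the strong-cocommutativity identity~\eqref{strongcomm} to show the pullback of any $\psi \in \Reg(C^{[1]},R)$ is lazy, and obtain the second isomorphism from Proposition~\ref{Alg-Reg}. The only cosmetic difference is that you invoke the subgroup property of $\Reg_{\ell}(C,R)$ to transfer the inverse $\varphi^{-1}$ through the quotient, whereas the paper deduces invertibility of $\underline{\varphi}$ from the homomorphism identity $\underline{\varphi \ast \psi} = \underline{\varphi} \ast \underline{\psi}$; both rest on the same fact recorded in Section~\ref{lazycohom}.
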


\begin{proof}
If $\psi \in C^*$ and $x \in H$, then by~\eqref{lazy1},
\begin{align*}
\varphi\bigl( \psi(x_1) \, x_2-\psi(x_2) \, x_1 \bigr) 
& = \varphi(x_2) \, \psi(x_1) - \varphi(x_1) \, \psi(x_2) \\
& = \varphi(x_2) \, \psi(x_1) - \varphi(x_2)  \, \psi(x_1) \\
& = 0 \, .
\end{align*}
Hence $\varphi$ induces the announced linear map $\underline{\varphi}$. 
If $\varphi, \psi \in \Reg_{\ell}(C,R)$,
then 
$$\underline{\varphi * \psi} = \underline{\varphi}*\underline{\psi}\, . $$
It follows that $\underline{\varphi}$ is convolution invertible, its
inverse being equal to~$\underline{\varphi^{-1}}$, 
where $\varphi^{-1}$ is the convolution inverse of~$\varphi$.
The map 
$$\Reg_{\ell}(C,R) \to \Reg(C^{[1]},R)$$
defined by
$\varphi \mapsto \underline{\varphi}$ is a homomorphism; 
it is clearly injective.
 
For $\varphi \in \Reg(C^{[1]},R)$, define $\varphi_0 \in \Reg(C,R)$
by $\varphi_0(x) = \varphi(\underline{x})$ for all $x\in C$. 
Using~\eqref{strongcomm}, we obtain
\begin{align*}
\psi\bigl(\varphi_0(x_1) \, x_2-\varphi_0(x_2) \, x_1 \bigr) 
& = \psi \bigl(\varphi(\underline{x_1}) \, x_2-\varphi(\underline{x_2}) \, x_1 \bigr) \\
& = \psi \bigl(\varphi(\underline{x_2}) \, x_1-\varphi(\underline{x_2}) \, x_1 \bigr) = 0
\end{align*}
for all $\psi \in C^*$ and $x \in C$.
It follows that $\varphi_0 \in \Reg_{\ell}(C,R)$
and, since $\underline{\varphi_0}= \varphi$, we obtain the first isomorphism.
The second isomorphism 
follows from Proposition~\ref{Alg-Reg}.
\end{proof}

\subsection{The lazy quotient~$H^{[2]}$}\label{lazy-quot2}

Given a Hopf algebra~$H$, we denote by~$H^{[2]}$ 
the quotient of $H\otimes H$ by the subspace spanned by the elements
\begin{equation}\label{H2-def}
\varphi(x_2y_2) \, x_1\otimes y_1 - \varphi(x_1y_1) \, x_2\otimes y_2
\end{equation} 
for all $x,y \in H$ and $\varphi \in H^* = \Hom(H,k)$.
The class of $x \otimes y \in H\otimes H$ in~$H^{[2]}$
will be denoted by~$\widetilde{x \otimes y}$.

\begin{proposition}\label{H2coalgebra}
There is a unique coalgebra structure on $H^{[2]}$ such that the projection $H \otimes H \to H^{[2]}$
is a coalgebra morphism.
Furthermore, we have the ``lazy-cocommutativity" identities
\begin{equation}\label{lazycomm}
\widetilde{x_1 \otimes y_1} \otimes x_2y_2 = 
 \widetilde{x_2 \otimes y_2} \otimes x_1y_1 \in H^{[2]} \otimes H
\end{equation}
for all $x,y \in H$.
\end{proposition}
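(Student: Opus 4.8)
The plan is to mimic the proof of Proposition~\ref{C1coalgebra} for the lazy quotient~$C^{[1]}$, adapting it to the two-variable setting. The coproduct on $H \otimes H$ (coming from the tensor-product coalgebra structure, where $\Delta(x \otimes y) = (x_1 \otimes y_1) \otimes (x_2 \otimes y_2)$) should descend to $H^{[2]}$, and the candidate formula for the induced coproduct is
\begin{equation*}
\Delta(\widetilde{x \otimes y}) = \widetilde{x_1 \otimes y_1} \otimes \widetilde{x_2 \otimes y_2} \, .
\end{equation*}
First I would introduce the linear map $\Delta' : H \otimes H \to H^{[2]} \otimes H^{[2]}$ defined by $\Delta'(x \otimes y) = \widetilde{x_1 \otimes y_1} \otimes \widetilde{x_2 \otimes y_2}$ and check that $\Delta'$ kills the spanning elements~\eqref{H2-def}, so that it factors through the quotient and yields a well-defined $\Delta : H^{[2]} \to H^{[2]} \otimes H^{[2]}$. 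Coassociativity and the counit axiom (with $\varepsilon(\widetilde{x \otimes y}) = \varepsilon(x)\varepsilon(y)$) are then inherited formally, and the projection $H \otimes H \to H^{[2]}$ is a coalgebra morphism by construction.

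The heart of the matter, and what I expect to be the main obstacle, is verifying that $\Delta'$ vanishes on the relations~\eqref{H2-def}; this is the exact analogue of the central computation in Proposition~\ref{C1coalgebra}. Concretely, applying $\Delta'$ to the generator $\varphi(x_2 y_2)\, x_1 \otimes y_1 - \varphi(x_1 y_1)\, x_2 \otimes y_2$ produces expressions in $H^{[2]} \otimes H^{[2]}$ involving the iterated coproduct, and I would move the scalar $\varphi(\cdots)$ across the tensor factor so that, on one side, the $\varphi$ pairs with the two innermost copies and, on the other, with the two outermost. The point is that in each tensorand of $H^{[2]} \otimes H^{[2]}$ one is free to reindex using the defining relations~\eqref{H2-def} themselves (applied within a single copy of $H^{[2]}$), and after shuffling the $\varphi$-contraction through, the two terms become literally equal. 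This is the step where care with the Heyneman--Sweedler indices is essential, since the relation must be invoked on the correct pair of components.

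Finally, for the lazy-cocommutativity identity~\eqref{lazycomm}, I would argue exactly as in the last paragraph of the proof of Proposition~\ref{C1coalgebra}: for arbitrary $\varphi \in H^*$, pairing the third tensorand of $\widetilde{x_1 \otimes y_1} \otimes x_2 y_2$ against~$\varphi$ gives $\varphi(x_2 y_2)\, \widetilde{x_1 \otimes y_1}$, which by the very definition~\eqref{H2-def} of the quotient equals $\varphi(x_1 y_1)\, \widetilde{x_2 \otimes y_2} = (\id \otimes \varphi)(\widetilde{x_2 \otimes y_2} \otimes x_1 y_1)$. Since this holds for all $\varphi \in H^*$ and these separate points of~$H$, the two sides of~\eqref{lazycomm} agree in $H^{[2]} \otimes H$, completing the proof.
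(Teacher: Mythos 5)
Your proposal is correct and follows essentially the same route as the paper's own proof: define $\Delta'(x \otimes y) = \widetilde{x_1 \otimes y_1} \otimes \widetilde{x_2 \otimes y_2}$, show it kills the generators~\eqref{H2-def} by applying the defining relation successively in each tensorand (with the other Sweedler components as spectators), and deduce~\eqref{lazycomm} by pairing the second factor against an arbitrary $\varphi \in H^*$ and using that $H^*$ separates points of~$H$. The chain of reindexings you describe is exactly the paper's six-step computation, so nothing is missing.
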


By definition,
the coproduct $\Delta$ and the counit $\varepsilon$ of~$H^{[2]}$
are given for all $x,y\in H$ by
\begin{equation*}
\Delta(\widetilde{x \otimes y}) 
= \widetilde{x_1 \otimes y_1} \otimes \widetilde{x_2 \otimes y_2}
\quad\text{and}\quad
\varepsilon(\widetilde{x \otimes y})= \varepsilon(x) \, \varepsilon(y) \, .
\end{equation*}

\begin{proof}
The coproduct of $H \otimes H$ induces a linear map
$$\Delta' : H \otimes H  \to H^{[2]} \otimes H^{[2]}$$
defined by 
$\Delta'(x \otimes y) 
=  \widetilde{x_1 \otimes y_1} \otimes \widetilde{x_2 \otimes y_2}$
($x,y \in H$).
For $x,y \in H$ and $\varphi \in H^*$, 
\begin{align*}
\Delta' \bigl(\varphi(x_1y_1) \, x_2 \otimes y_2 \bigr) 
& = \varphi(x_1y_1) \, \widetilde{x_2 \otimes y_2} \otimes  \widetilde{x_3 \otimes y_3} \\
& =  \varphi(x_2y_2) \, \widetilde{x_1\otimes y_1} \otimes \widetilde{x_3 \otimes y_3} \\
& =  \widetilde{x_1\otimes y_1} \otimes \varphi(x_2y_2) \, \widetilde{x_3 \otimes y_3} \\
& =  \widetilde{x_1\otimes y_1} \otimes \varphi(x_3y_3) \, \widetilde{x_2 \otimes y_2} \\
& =  \varphi(x_3y_3) \, \widetilde{x_1\otimes y_1} \otimes \widetilde{x_2 \otimes y_2}\\
& = \Delta'(\varphi(x_2y_2) \, x_1 \otimes y_1) \, .
\end{align*}
Thus $\Delta'$ induces the announced  linear map $\Delta : H^{[2]} \rightarrow H^{[2]} \otimes H^{[2]}$, 
which is clearly coassociative. 
The counit is defined similarly and we obtain the desired coalgebra structure.
For $\varphi \in H^*$ and $x,y \in H$, 
\begin{align*}
(\id \otimes \, \varphi) (\widetilde{x_1 \otimes y_1} \otimes x_2y_2) 
& =  \varphi(x_2y_2) \, \widetilde{x_1 \otimes y_1} \\
& = \varphi(x_1y_1) \, \widetilde{x_2 \otimes y_2} \\
&  = (\id \otimes \, \varphi) (\widetilde{x_2 \otimes y_2} \otimes x_1y_1) \, .
\end{align*}
This yields~\eqref{lazycomm}.
\end{proof}

\begin{definition}
The coalgebra~$H^{[2]}$
is called the $2$-lazy quotient associated to the Hopf algebra~$H$.
\end{definition}

Observe that $H^{[2]} = H$ if $H$ is cocommutative.
Proposition~\ref{C1motivation} has the following counterpart.

\begin{proposition}\label{H2motivation}
Let $H$ be a Hopf algebra and $R$ a commutative algebra.
For any $\sigma \in  \reg_{\ell}^2(H,R)$
there is a unique element $\underline{\sigma} \in \Reg(H^{[2]},R)$
such that $\underline{\sigma}(\widetilde{x \otimes y})= \sigma(x \otimes y)$
for all $x,y \in H$.
There are group isomorphisms
\begin{equation*}
\reg_{\ell}^2(H,R) \cong \Reg(H^{[2]},R) \cong \Alg(F(H^{[2]}),R) \, .
\end{equation*} 
\end{proposition}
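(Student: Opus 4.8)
The plan is to mirror exactly the proof of Proposition~\ref{C1motivation}, since the statement asserts for $H^{[2]}$ precisely what that proposition asserted for $C^{[1]}$. The whole content is the first group isomorphism $\reg^2_\ell(H,R) \cong \Reg(H^{[2]},R)$, because the second isomorphism $\Reg(H^{[2]},R) \cong \Alg(F(H^{[2]}),R)$ is immediate from Proposition~\ref{Alg-Reg} applied to the coalgebra $C = H^{[2]}$. So the real work is constructing a group homomorphism $\reg^2_\ell(H,R) \to \Reg(H^{[2]},R)$, $\sigma \mapsto \underline{\sigma}$, and an inverse to it.

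First I would show that every $\sigma \in \reg^2_\ell(H,R)$ factors through the quotient $H^{[2]}$. Concretely, I must check that $\sigma$ annihilates the spanning elements~\eqref{H2-def}, i.e.\ that $\varphi(x_2 y_2)\,\sigma(x_1\otimes y_1) = \varphi(x_1 y_1)\,\sigma(x_2\otimes y_2)$ for all $\varphi \in H^*$ and $x,y \in H$. This is exactly the laziness condition~\eqref{lazy2} read off by pairing with $\varphi$: applying $\id_R \otimes \varphi$ to the identity $\sigma(x_1\otimes y_1)\otimes x_2 y_2 = \sigma(x_2\otimes y_2)\otimes x_1 y_1$ in $R\otimes H$ gives the required equality. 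This yields a well-defined $\underline{\sigma} \in \Hom(H^{[2]},R)$ with $\underline{\sigma}(\widetilde{x\otimes y}) = \sigma(x\otimes y)$, and uniqueness is clear since the classes $\widetilde{x\otimes y}$ span $H^{[2]}$.

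Next I would verify that $\sigma \mapsto \underline{\sigma}$ is multiplicative for the convolution product, using that the projection $H\otimes H \to H^{[2]}$ is a coalgebra morphism by Proposition~\ref{H2coalgebra}: this gives $\underline{\sigma *\tau} = \underline{\sigma}*\underline{\tau}$ and, in particular, that $\underline{\sigma}$ is convolution invertible with inverse $\underline{\sigma^{-1}}$, so the map lands in $\Reg(H^{[2]},R)$ and is a group homomorphism. Injectivity is immediate. For surjectivity, given $\tau \in \Reg(H^{[2]},R)$ I would set $\sigma_0(x\otimes y) = \tau(\widetilde{x\otimes y})$ and check $\sigma_0 \in \reg^2_\ell(H,R)$: the laziness identity~\eqref{lazy2} for $\sigma_0$ follows by pairing an arbitrary $\varphi \in H^*$ against the lazy-cocommutativity identity~\eqref{lazycomm} of Proposition~\ref{H2coalgebra}, applying $\tau$, exactly as in the $C^{[1]}$ case. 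Since $\underline{\sigma_0} = \tau$, this gives the first isomorphism, and composing with Proposition~\ref{Alg-Reg} completes the proof.

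The only step requiring genuine care is the surjectivity argument: one must confirm that $\sigma_0$ is convolution invertible (its inverse being $x\otimes y \mapsto \tau^{-1}(\widetilde{x\otimes y})$, which works because the projection respects coproducts) and, crucially, that $\sigma_0$ really satisfies~\eqref{lazy2} as an identity in $R\otimes H$ and not merely after pairing with each $\varphi$. The latter holds because $H^*$ separates points of $H$, so equality after applying every $\id_R\otimes\varphi$ forces equality in $R\otimes H$; this is the point at which property~\eqref{lazycomm} of the quotient $H^{[2]}$ is exactly what is needed. I expect no other obstacle, as every computation parallels the already-established Proposition~\ref{C1motivation}.
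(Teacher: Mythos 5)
Your proposal is correct and takes essentially the same approach as the paper, whose own proof simply states that the argument ``follows the same lines'' as Proposition~\ref{C1motivation} and leaves the details to the reader. You have supplied exactly those details: laziness~\eqref{lazy2} to factor $\sigma$ through $H^{[2]}$, the fact that the projection $H\otimes H \to H^{[2]}$ is a coalgebra morphism (Proposition~\ref{H2coalgebra}) for compatibility with convolution and invertibility, the lazy-cocommutativity identity~\eqref{lazycomm} for surjectivity, and Proposition~\ref{Alg-Reg} for the second isomorphism.
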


\begin{proof}
The proof follows the same lines as that of Proposition~\ref{C1motivation}. 
Details are left to the reader. 
The algebra morphism $\tilde{\underline{\sigma}}: F(H^{[2]}) \to R$
associated to~$\sigma$ is given by
$$\tilde{\underline{\sigma}} \bigl( t(\widetilde{x\otimes y}) \bigr) = \sigma(x \otimes y)$$
for all $x,y \in H$.
\end{proof}

\section{The First Lazy Homology Hopf Algebra}\label{lazyhom1}

In this section we define the first lazy homology of a Hopf algebra;
it will turn out to be a commutative cocommutative Hopf algebra.

\subsection{The Hopf algebra $\H^{\ell}_1(H)$}\label{HL1a}

To a Hopf algebra $H$ we associate the coalgebra~$H^{[1]}$,
as defined in Section~\ref{lazy-quot1}, 
and the commutative Hopf algebra~$F(H^{[1]})$,
where $F$ is the functor introduced in Section~\ref{Takeuchi}.
We denote by~$\H^{\ell}_1(H)$ the quotient of~$F(H^{[1]})$ 
by the ideal generated by all elements of the form
$$t(\underline{xy}) - t(\underline{x}) \, t(\underline{y}) \, $$
where $x, y \in H$.
The class of an element $a \in F(H^{[1]})$ in $\H^{\ell}_1(H)$
is denoted by~$a^\dagger$.
By definition, 
\begin{equation}\label{t-mult}
t(\underline{xy})^\dagger = t(\underline{x})^\dagger \, t(\underline{y})^\dagger
\end{equation}
for all $x,y \in H$.

\begin{lemma}\label{tbar-mult}
For all $x,y \in H$,
\begin{equation*}
{t}^{-1}(\underline{xy})^\dagger
= {t}^{-1}(\underline{x})^{\dagger} \, {t}^{-1}(\underline{y})^\dagger \, .
\end{equation*}
\end{lemma}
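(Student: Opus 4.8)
The plan is to recast the identity as an equality in the convolution monoid $\Hom(H \otimes H, \H^{\ell}_1(H))$, where $H \otimes H$ carries its tensor-product coalgebra structure (so that $\Delta(x \otimes y) = (x_1 \otimes y_1) \otimes (x_2 \otimes y_2)$) and $\H^{\ell}_1(H)$ is the target algebra. I would introduce three linear maps $H \otimes H \to \H^{\ell}_1(H)$, namely
\begin{equation*}
a(x \otimes y) = {t}^{-1}(\underline{xy})^\dagger, \quad
b(x \otimes y) = {t}^{-1}(\underline{x})^\dagger \, {t}^{-1}(\underline{y})^\dagger, \quad
c(x \otimes y) = t(\underline{xy})^\dagger.
\end{equation*}
The goal $a = b$ will follow once I show that $c$ has $a$ as a left convolution inverse and $b$ as a right convolution inverse, since then associativity of the convolution product gives $a = a * (c * b) = (a * c) * b = b$ (using that $\eta\varepsilon$ is the convolution unit).

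First I would compute $a * c$. By the convolution formula~\eqref{convol} and the coproduct of $H \otimes H$, one has $(a * c)(x \otimes y) = {t}^{-1}(\underline{x_1 y_1})^\dagger \, t(\underline{x_2 y_2})^\dagger$. Writing $z = xy$, this is the image in $\H^{\ell}_1(H)$ of ${t}^{-1}(\underline{z_1}) \, t(\underline{z_2})$, which by the Takeuchi relation~\eqref{ttbar} applied to $\underline{z} \in H^{[1]}$ equals $\varepsilon(z)\,1 = \varepsilon(x)\varepsilon(y)\,1$. Hence $a * c = \eta\varepsilon$.

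Next I would compute $c * b$, and this is where the defining relation~\eqref{t-mult} of $\H^{\ell}_1(H)$ enters: $(c * b)(x \otimes y) = t(\underline{x_1 y_1})^\dagger \, {t}^{-1}(\underline{x_2})^\dagger \, {t}^{-1}(\underline{y_2})^\dagger$, and replacing $t(\underline{x_1 y_1})^\dagger$ by $t(\underline{x_1})^\dagger \, t(\underline{y_1})^\dagger$ via~\eqref{t-mult} and regrouping by commutativity of $\H^{\ell}_1(H)$ gives $\bigl(t(\underline{x_1})^\dagger \, {t}^{-1}(\underline{x_2})^\dagger\bigr)\bigl(t(\underline{y_1})^\dagger \, {t}^{-1}(\underline{y_2})^\dagger\bigr)$; applying the other half of~\eqref{ttbar} to $\underline{x}$ and to $\underline{y}$ separately yields $\varepsilon(x)\varepsilon(y)\,1$, so $c * b = \eta\varepsilon$. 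The conclusion then follows by evaluating $a = b$ on $x \otimes y$.

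I expect no serious obstacle here; the argument is a routine convolution computation. The only points requiring care are choosing the correct half of the two identities in~\eqref{ttbar} in each of the two verifications, and recognizing that the defining relation~\eqref{t-mult} together with commutativity is exactly what is needed to split the ``mixed'' term $t(\underline{x_1 y_1})^\dagger$ so that the Takeuchi relation can be applied factor by factor.
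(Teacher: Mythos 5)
Your proposal is correct and is essentially the paper's own proof in disguise: the paper's direct chain of equalities --- inserting $t\,t^{-1}$ pairs via \eqref{ttbar}, merging them with \eqref{t-mult}, and cancelling --- is exactly the unrolled form of your identity $a = a*(c*b) = (a*c)*b = b$ in the convolution monoid $\Hom(H\otimes H, \H^{\ell}_1(H))$. Both arguments rest on the same ingredients (both halves of \eqref{ttbar}, the defining relation \eqref{t-mult}, commutativity of $\H^{\ell}_1(H)$, and the fact that the projection $H \to H^{[1]}$ is a coalgebra map), so only the packaging differs.
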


\begin{proof}
Using \eqref{ttbar}, \eqref{t-mult}, and the relations satisfied by counits, we obtain
\begin{align*}
{t}^{-1}(\underline{xy})^\dagger
& = {t}^{-1}(\underline{x_1y_1})^\dagger \, \varepsilon(x_2y_2)
= {t}^{-1}(\underline{x_1y_1})^\dagger \, \varepsilon(x_2) \, \varepsilon(y_2) \\
& = {t}^{-1}(\underline{x_1y_1})^\dagger \,
t(\underline{x_2})^{\dagger} \, t(\underline{y_2})^\dagger \, 
{t}^{-1}(\underline{x_3})^\dagger \, 
{t}^{-1}(\underline{y_3})^\dagger \\
& =   {t}^{-1}(\underline{x_1y_1})^\dagger \, 
t(\underline{x_2y_2})^\dagger \, 
{t}^{-1}(\underline{x_3})^\dagger \, {t}^{-1}(\underline{y_3})^\dagger \\
& = \varepsilon(x_1y_1) \,
{t}^{-1}(\underline{x_2})^\dagger \, 
{t}^{-1}(\underline{y_2})^\dagger
= \varepsilon(x_1) \, \varepsilon(y_1) \,
{t}^{-1}(\underline{x_2})^\dagger \, 
{t}^{-1}(\underline{y_2})^\dagger \\
& = {t}^{-1}(\underline{x})^\dagger \, {t}^{-1}(\underline{y})^\dagger \, .
\end{align*}
\end{proof}

\begin{proposition}\label{HL1-Hopf}
There is a unique Hopf algebra structure on~$\H^{\ell}_1(H)$
such that the natural projection $F(H^{[1]}) \to \H^{\ell}_1(H)$
is a Hopf algebra morphism.
The Hopf algebra $\H^{\ell}_1(H)$ is commutative and cocommutative. 
\end{proposition}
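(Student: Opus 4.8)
The plan is to realize $\H^\ell_1(H)$ as the quotient of the commutative Hopf algebra $A = F(H^{[1]})$ by a Hopf ideal. Write $\pi : A \to \H^\ell_1(H)$, $a \mapsto a^\dagger$, for the canonical projection, and let $I = \ker \pi$ be the ideal generated by the elements $a_{x,y} = t(\underline{xy}) - t(\underline{x})\,t(\underline{y})$ ($x,y \in H$). Once I know that $I$ is a Hopf ideal, the quotient $A/I = \H^\ell_1(H)$ carries a unique Hopf algebra structure for which $\pi$ is a Hopf algebra morphism: uniqueness is immediate since $\pi$ is surjective, so that $\Delta$, $\varepsilon$ and $S$ on the quotient are forced by the requirement that they commute with~$\pi$. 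Commutativity of $\H^\ell_1(H)$ is inherited from that of~$A$.

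To prove that $I$ is a Hopf ideal I verify the three conditions $\varepsilon(I) = 0$, $\Delta(I) \subseteq I \otimes A + A \otimes I$, and $S(I) \subseteq I$, and it suffices to test them on the generators~$a_{x,y}$. The counit condition is immediate from~\eqref{Hopf-on-Takeuchi}, since $\varepsilon(t(\underline{xy})) = \varepsilon(xy) = \varepsilon(x)\varepsilon(y) = \varepsilon(t(\underline{x})\,t(\underline{y}))$. For the coproduct condition, the clean way to proceed is to consider the composite $\overline{\Delta} = (\pi \otimes \pi)\circ\Delta : A \to \H^\ell_1(H)\otimes \H^\ell_1(H)$, which is an algebra morphism whose kernel is $\ker(\pi\otimes\pi) = I \otimes A + A \otimes I$. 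Since $\ker\overline\Delta$ is an ideal, it is enough to check that $\overline\Delta(a_{x,y}) = 0$. Using that $\underline{\,\cdot\,} : H \to H^{[1]}$ is a coalgebra morphism and that $\Delta(xy) = x_1y_1 \otimes x_2y_2$ in~$H$, one computes in~$A$
\[
\Delta\bigl(t(\underline{xy})\bigr) = t(\underline{x_1y_1}) \otimes t(\underline{x_2y_2}), \qquad
\Delta\bigl(t(\underline{x})\,t(\underline{y})\bigr) = t(\underline{x_1})\,t(\underline{y_1}) \otimes t(\underline{x_2})\,t(\underline{y_2}) \, ,
\]
and substituting $t(\underline{x_iy_i}) = t(\underline{x_i})\,t(\underline{y_i}) + a_{x_i,y_i}$ expresses $\Delta(a_{x,y})$ as a sum of terms each carrying a factor $a_{x_1,y_1}$ or $a_{x_2,y_2}$, hence lying in $I \otimes A + A \otimes I$. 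This yields $\overline\Delta(a_{x,y}) = 0$ and therefore $\Delta(I) \subseteq I \otimes A + A \otimes I$. For the antipode condition I use~\eqref{Hopf-on-Takeuchi} and commutativity of~$A$ to write $S(a_{x,y}) = t^{-1}(\underline{xy}) - t^{-1}(\underline{x})\,t^{-1}(\underline{y})$; that this element lies in~$I$ is exactly the content of Lemma~\ref{tbar-mult}.

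With the three conditions established, $I$ is a Hopf ideal, which produces the Hopf algebra $\H^\ell_1(H)$ together with all the structure formulas displayed before the proposition. It remains to prove cocommutativity. I first note that $F$ carries cocommutative coalgebras to cocommutative Hopf algebras: by Corollary~\ref{C1cocomm} the coalgebra $H^{[1]}$ is cocommutative, so the two algebra morphisms $\Delta$ and $\tau\circ\Delta$ (with $\tau$ the flip, itself an algebra endomorphism of $A\otimes A$) agree on the algebra generators $t(\underline{x})$ and $t^{-1}(\underline{x})$ of~$A$ by~\eqref{Hopf-on-Takeuchi}, hence agree on all of~$A$, so $A$ is cocommutative. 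Cocommutativity then descends to the quotient $\H^\ell_1(H)$ because $\pi$ is a surjective coalgebra morphism. The one genuine computation is the coproduct condition for the Hopf ideal; everything else is formal, with the antipode condition supplied directly by Lemma~\ref{tbar-mult}.
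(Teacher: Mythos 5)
Your proposal is correct and follows essentially the same route as the paper: your map $\overline{\Delta} = (\pi\otimes\pi)\circ\Delta$ is precisely the paper's induced algebra morphism $\Delta'$, your expansion via $t(\underline{x_iy_i}) = t(\underline{x_i})\,t(\underline{y_i}) + a_{x_i,y_i}$ is the same computation as the paper's use of~\eqref{t-mult} in the quotient, and like the paper you handle the antipode with Lemma~\ref{tbar-mult} and deduce (co)commutativity from $F(H^{[1]})$ and $H^{[1]}$. Packaging the verification as ``$I$ is a Hopf ideal'' rather than ``the structure maps descend'' is only a cosmetic difference, and your use of Lemma~\ref{tbar-mult} is not circular since its proof takes place purely in the quotient algebra.
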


It follows from~\eqref{Hopf-on-Takeuchi} that
the coproduct~$\Delta$, the counit~$\varepsilon$, 
and the antipode~$S$ of~$\H^{\ell}_1(H)$ are given for all $x\in H$ by
\begin{align*}
\Delta \bigl( t(\underline{x})^\dagger \bigr) 
& =  t(\underline{x_1})^\dagger \otimes t(\underline{x_2})^\dagger \, , \quad
\Delta \bigl( {t}^{-1}(\underline{x})^\dagger \bigr) 
=  {t}^{-1}(\underline{x_2})^\dagger \otimes {t}^{-1}(\underline{x_1})^\dagger \, , \\ 
\varepsilon \bigl( t(\underline{x})^\dagger \bigr)  
& = \varepsilon \bigl( {t}^{-1}(\underline{x})^\dagger \bigr) = \varepsilon(x)\, , \\
S \bigl( t(\underline{x})^\dagger \bigr) & = {t}^{-1}(\underline{x})^\dagger \, , \quad
S \bigl( {t}^{-1}(\underline{x})^\dagger \bigr) = t(\underline{x})^\dagger \, .
\end{align*}  

\begin{proof}
The coproduct of $F(H^{[1]})$ induces an algebra morphism
\begin{equation*}
\Delta' : F(H^{[1]})  \to \H^{\ell}_1(H) \otimes \H^{\ell}_1(H)
\end{equation*}
such that
$\Delta'(t(\underline{x}))
=  t(\underline{x_1})^\dagger \otimes t(\underline{x_2})^\dagger$
and
$\Delta'({t}^{-1}(\underline{x})) =
{t}^{-1}(\underline{x_2})^\dagger \otimes {t}^{-1}(\underline{x_1})^\dagger$
for all $x\in H$.
Using~\eqref{t-mult}, we have
\begin{eqnarray*}
\Delta' \bigl( t(\underline{xy}) \bigr) 
& = & t(\underline{x_1y_1})^\dagger \otimes t(\underline{x_2y_2})^\dagger \\
& = & t(\underline{x_1})^\dagger t(\underline{y_1})^\dagger \otimes t(\underline{x_2})^\dagger t(\underline{y_2})^\dagger \\
& = & \Delta' \bigl( t(\underline{x}) \bigr) \, \Delta' \bigl( t(\underline{y}) \bigr) \\
& = & \Delta' \bigl( t(\underline{x})t(\underline{y}) \bigr)
\end{eqnarray*}
for all $x,y \in H$.
Thus $\Delta'$ induces the desired coproduct. 
The construction of the counit and the antipode are similar and left to the reader 
(use Lemma~\ref{tbar-mult}). 
The Hopf algebra $\H^{\ell}_1(H)$ inherits the commutativity of~$F(H^{[1]})$
and the cocommutativity of~$H^{[1]}$. 
\end{proof}

\begin{definition}
The first lazy homology Hopf algebra of~$H$
is the commutative cocommutative Hopf algebra~$\H^{\ell}_1(H)$. 
\end{definition}

The following result relates~$\H^{\ell}_1(H)$ to the
lazy cohomology group $\H_{\ell}^1(H,R)$ of Definition~\ref{lazycoh1}.

\begin{theorem}\label{UCT1}
For any Hopf algebra~$H$ and any commutative algebra~$R$
there is a group isomorphism
$$\H_{\ell}^1(H,R)  \cong \Alg(\H^{\ell}_1(H),R) \, .$$
\end{theorem}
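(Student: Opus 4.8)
The plan is to deduce the isomorphism by combining Proposition~\ref{C1motivation} with the universal property of the quotient defining~$\H^{\ell}_1(H)$. By Proposition~\ref{C1motivation} applied to $C = H$, there is a group isomorphism $\Reg_{\ell}(H,R) \cong \Alg(F(H^{[1]}),R)$, under which a lazy element $\varphi \in \Reg_{\ell}(H,R)$ corresponds to the unique algebra morphism $f \colon F(H^{[1]}) \to R$ with $f\bigl(t(\underline{x})\bigr) = \varphi(x)$ for all $x \in H$. Since the projection $\pi \colon F(H^{[1]}) \to \H^{\ell}_1(H)$ is a surjective Hopf algebra morphism, Lemma~\ref{inject} shows that $\pi^* \colon \Alg(\H^{\ell}_1(H),R) \to \Alg(F(H^{[1]}),R)$ is injective, and I would identify $\Alg(\H^{\ell}_1(H),R)$ with the image of~$\pi^*$.

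The next step is to describe this image concretely. An algebra morphism $f \colon F(H^{[1]}) \to R$ factors through the quotient~$\H^{\ell}_1(H)$ precisely when it annihilates the defining ideal, that is, when $f\bigl(t(\underline{xy})\bigr) = f\bigl(t(\underline{x})\bigr)\, f\bigl(t(\underline{y})\bigr)$ for all $x,y \in H$. Reading this through the correspondence $f\bigl(t(\underline{x})\bigr) = \varphi(x)$, the condition becomes exactly $\varphi(xy) = \varphi(x)\,\varphi(y)$. Hence $\pi^*$ identifies $\Alg(\H^{\ell}_1(H),R)$ with the set of those $\varphi \in \Reg_{\ell}(H,R)$ that are in addition multiplicative.

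It then remains to check that this set coincides with $\H_{\ell}^1(H,R) = \Alg(H,R) \cap \Reg^1_{\ell}(H,R)$. One inclusion is immediate, since a lazy algebra morphism is convolution invertible (its inverse being $\varphi \circ S$) and multiplicative. For the converse --- the only point I expect to require a small argument --- suppose $\varphi \in \Reg_{\ell}(H,R)$ is multiplicative; I must recover the normalization $\varphi(1)=1$. From $\varphi(1) = \varphi(1 \cdot 1) = \varphi(1)^2$ the element $\varphi(1) \in R$ is idempotent, while evaluating the convolution identity $\varphi * \varphi^{-1} = \eta\varepsilon$ at~$1$ gives $\varphi(1)\,\varphi^{-1}(1) = 1$, so $\varphi(1)$ is invertible; an invertible idempotent in a ring equals~$1$. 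Thus $\varphi$ is a unital algebra morphism and lies in $\H_{\ell}^1(H,R)$. Since every identification above is a homomorphism for the convolution product and I have merely restricted to matching subgroups, the resulting bijection is an isomorphism of groups.
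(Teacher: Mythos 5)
Your proof is correct and follows essentially the same route as the paper's: both rest on Proposition~\ref{C1motivation} (via Proposition~\ref{Alg-Reg}) together with the observation that an algebra morphism $F(H^{[1]}) \to R$ factors through the quotient $\H^{\ell}_1(H)$ exactly when the corresponding lazy element of $\Reg_{\ell}(H,R)$ is multiplicative. Your idempotent-plus-invertible argument recovering the normalization $\varphi(1)=1$ is a small detail the paper leaves implicit, but it does not alter the substance of the argument.
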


\begin{proof}
Let $\mu \in \H_{\ell}^1(H,R)$. Since $\mu \in \Reg_{\ell}(H,R)$, 
we know from Propositions~\ref{Alg-Reg} and~\ref{C1motivation} 
that $\mu$ induces an algebra morphism $\mu^* : F(H^{[1]}) \to R$.
The morphism~$\mu^*$ vanishes on the defining ideal of~$\H^{\ell}_1(H)$, 
since $\mu$ is an algebra morphism. 
We thus obtain a map
\begin{equation*}
\H_{\ell}^1(H,R)  \to \Alg(\H^{\ell}_1(H),R) \, ,  \; \mu \mapsto \mu^* \, ,
\end{equation*}
where $\mu^*(t(\underline{x})^\dagger) = \mu(x)$
and $\mu^*({t}^{-1}(\underline{x})^\dagger) = \mu(S(x))$
for all $x\in H$. The map $ \mu \mapsto \mu^*$
is obviously injective and is a homomorphism.

Any map $\chi \in \Alg(\H^{\ell}_1(H),R)$ induces a linear
map $\chi_0 : H \to R$ defined by 
$\chi_0(x) = \chi(t(\underline{x})^\dagger)$ for all $x\in H$. 
By Proposition~\ref{C1motivation} and its proof,
$\chi_0$~belongs to~$\Reg_{\ell}(H,R)$.
For $x,y \in H$, 
$$\chi_0(xy) = \chi(t(\underline{xy})^\dagger) 
= \chi \bigl( t(\underline{x})^\dagger \bigr) \, \chi \bigl( t(\underline{y})^\dagger \bigr)
= \chi_0(x) \, \chi_0(y) \, ;$$
hence, $\chi_0 \in \H_{\ell}^1(H,R)$. 
In view of~$\chi_0^*=\chi$, this concludes the proof.
\end{proof}

\subsection{An alternative description of~$\H^{\ell}_1(H)$}\label{HL1b}

We now give another, more direct, description of~$\H^{\ell}_1(H)$. 
Let $H^{\langle 1\rangle }$ be the quotient of~$H$ 
by the two-sided ideal~$I$ generated by the elements
\begin{equation}\label{idealH1}
\varphi(x_1) \, x_2 - \varphi(x_2) \, x_1\, ,
\end{equation}
where $x \in H$ and $\varphi \in H^* = \Hom(H,k)$. 
The canonical projection $H \to H^{\langle 1\rangle }$ obviously
factors through the quotient $H^{[1]}$ defined in Section~\ref{lazy-quot1}.

The proof of the following proposition is similar to 
that of Proposition~\ref{C1coalgebra} and is left to the reader.

\begin{proposition}
There is a unique Hopf algebra structure on~$H^{\langle 1\rangle}$
such that the natural projection $H \to H^{\langle 1\rangle}$
is a Hopf algebra morphism.
The Hopf algebra~$H^{\langle 1 \rangle}$ is cocommutative.
\end{proposition}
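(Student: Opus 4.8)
The plan is to follow the template set by Proposition~\ref{C1coalgebra}, transferring the Hopf structure of~$H$ to the quotient~$H^{\langle 1\rangle}$ and verifying compatibility with the defining relations. First I would show that the two-sided ideal~$I$ generated by the elements~\eqref{idealH1} is a \emph{Hopf ideal}, that is, that $\Delta(I) \subseteq I \otimes H + H \otimes I$, that $\varepsilon(I) = 0$, and that $S(I) \subseteq I$. Granting this, the quotient algebra~$H^{\langle 1\rangle} = H/I$ inherits a unique Hopf algebra structure for which the projection $H \to H^{\langle 1\rangle}$ is a Hopf algebra morphism; this is the standard fact that quotients of a Hopf algebra by a Hopf ideal are again Hopf algebras. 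Denoting by~$\overline{x}$ the class of $x\in H$, the coproduct, counit, and antipode are then forced to be $\Delta(\overline{x}) = \overline{x_1} \otimes \overline{x_2}$, $\varepsilon(\overline{x}) = \varepsilon(x)$, and $S(\overline{x}) = \overline{S(x)}$.

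The computation that $I$ is a coideal is essentially the one already carried out in the proof of Proposition~\ref{C1coalgebra}: applying $\Delta' : H \to H^{\langle 1\rangle} \otimes H^{\langle 1\rangle}$, $\Delta'(x) = \overline{x_1} \otimes \overline{x_2}$, to a generator~$\varphi(x_1)\,x_2 - \varphi(x_2)\,x_1$ and using the very relations defining~$H^{\langle 1\rangle}$ to slide~$\varphi$ across the tensor factor shows that it lands in the image of~$I$. For the counit, $\varepsilon(\varphi(x_1)\,x_2 - \varphi(x_2)\,x_1) = \varphi(x_1)\,\varepsilon(x_2) - \varphi(x_2)\,\varepsilon(x_1) = \varphi(x) - \varphi(x) = 0$. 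The antipode condition requires checking that $S$ sends each generator into~$I$; here I would apply~$S$, use its anti-multiplicativity and the identity $\varphi(x_1)\,S(x_2) = \varphi(S^{-1}(x_2))\,x_1$-type manipulations, or more simply observe that $S(\varphi(x_1)\,x_2 - \varphi(x_2)\,x_1)$ is again of the same form after replacing~$\varphi$ by the functional $\varphi \circ S$, so it belongs to~$I$.

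For cocommutativity I would argue exactly as in Corollary~\ref{C1cocomm}: the quotient~$H^{\langle 1\rangle}$ satisfies the strong-cocommutativity identity $\overline{x_1} \otimes \overline{x_2} = \overline{x_2} \otimes \overline{x_1}$. This follows by pairing the second tensor factor against an arbitrary~$\varphi \in H^*$, which by construction identifies $\varphi(x_2)\,\overline{x_1}$ with $\varphi(x_1)\,\overline{x_2}$ in~$H^{\langle 1\rangle}$, and then noting that functionals of the form $\id \otimes \varphi$ separate the elements of $H^{\langle 1\rangle} \otimes H^{\langle 1\rangle}$. Equivalently, since the projection $H \to H^{\langle 1\rangle}$ factors through the cocommutative coalgebra~$H^{[1]}$ of Proposition~\ref{C1coalgebra}, cocommutativity is inherited automatically.

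The main obstacle I anticipate is the antipode step: unlike the coalgebra case treated in Proposition~\ref{C1coalgebra}, one must genuinely verify $S(I) \subseteq I$, and this is where the Hopf-algebraic (as opposed to merely coalgebraic) nature of~$H$ enters, since~$I$ is a \emph{two-sided algebra ideal} rather than just a coideal. One must confirm both that $I$ is a coideal compatibly with the algebra structure and that $S$ preserves it; the interplay between the multiplicative closure of~$I$ and the comultiplicative compatibility is the delicate point, though it is routine once the correct substitution $\varphi \mapsto \varphi \circ S$ is identified.
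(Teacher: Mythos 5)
Your overall architecture is the intended one (the paper leaves the proof to the reader as ``similar to Proposition~\ref{C1coalgebra}''): show the two-sided ideal $I$ generated by the elements~\eqref{idealH1} is a Hopf ideal, quotient, and deduce cocommutativity either from the analogue of~\eqref{strongcomm} or from the factorization through~$H^{[1]}$. The coideal and counit steps are fine as you describe them: the computation from Proposition~\ref{C1coalgebra} shows the map $\Delta' : H \to H^{\langle 1\rangle}\otimes H^{\langle 1\rangle}$ kills each generator, and since $\Delta'$ and $\varepsilon$ are algebra morphisms they kill all of $I = HWH$, where $W$ is the span of the generators $w_{\varphi,x} = \varphi(x_1)\,x_2 - \varphi(x_2)\,x_1$. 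But the antipode step --- which you yourself single out as the delicate point --- genuinely fails as you justify it. One has $S(w_{\varphi,x}) = \varphi(x_1)\,S(x_2) - \varphi(x_2)\,S(x_1)$, while the generator built from the functional $\varphi\circ S$ and the element $S(x)$ is $\varphi(S^2(x_2))\,S(x_1) - \varphi(S^2(x_1))\,S(x_2)$ (and the one built from $\varphi \circ S$ and $x$ carries no $S$ on the element legs at all). These coincide with $\mp S(w_{\varphi,x})$ only when $S^2 = \id$. The substitution that actually works is $\psi = \varphi\circ S^{-1}$ together with $y = S(x)$, and your alternative manipulation $\varphi(x_1)\,S(x_2) = \varphi(S^{-1}(x_2))\,x_1$ likewise invokes $S^{-1}$; both presuppose a bijective antipode, which is not part of the hypotheses (antipodes of arbitrary Hopf algebras need not be bijective), so the step $S(I)\subseteq I$ is left unproved exactly where the statement's full generality requires it.

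The gap is repairable by a convolution-monoid argument that avoids $S^{-1}$ entirely. Once $I$ is known to be a biideal, let $p : H \to H/I$ be the projection onto the quotient bialgebra. Since $p$ is an algebra morphism, it is invertible in the convolution monoid $\Hom(H, H/I)$ with inverse $p\circ S$ (indeed $p(x_1)\,p(S(x_2)) = p(x_1 S(x_2)) = \varepsilon(x)\,1$, and similarly on the other side). The defining relations~\eqref{idealH1} say precisely that $p * \varphi = \varphi * p$ for every $\varphi \in H^*$, viewed inside $\Hom(H, H/I)$ via $\varphi \mapsto \varphi(\cdot)\,1$. In any monoid, if an invertible element commutes with a given element, so does its inverse: from $p * \varphi = \varphi * p$ one gets $(p\circ S) * \varphi = \varphi * (p\circ S)$ by multiplying by $p\circ S$ on both sides. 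Unwinding, $(p\circ S) * \varphi - \varphi * (p\circ S)$ evaluated at $x$ equals $-\,p\bigl(S(w_{\varphi,x})\bigr)$, so $S(W) \subseteq \Ker(p) = I$, and then anti-multiplicativity gives $S(I) = S(HWH) \subseteq S(H)\,S(W)\,S(H) \subseteq I$. With this replacement for your antipode paragraph, the rest of your proof (uniqueness of the induced structure, and cocommutativity inherited through the surjection $H^{[1]} \to H^{\langle 1\rangle}$) goes through as written.
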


The Hopf algebra $H^{\langle 1 \rangle}$ has the following property.

\begin{proposition}
For any commutative algebra~$R$,
the group $\Alg(H^{\langle 1 \rangle},R)$ consists of 
the elements of $\Alg(H,R)$ commuting in~$\Hom(H,R)$
with all elements of $\Hom(H,k)\, 1_R$.
\end{proposition}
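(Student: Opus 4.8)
The plan is to characterize $\Alg(H^{\langle 1\rangle},R)$ by first using the universal property of the quotient. Since $H^{\langle 1\rangle} = H/I$ where $I$ is the two-sided ideal generated by the elements $\varphi(x_1)\,x_2 - \varphi(x_2)\,x_1$, an algebra morphism $f\colon H \to R$ factors through $H^{\langle 1\rangle}$ if and only if $f$ vanishes on~$I$, equivalently if and only if $f$ vanishes on all the generators~\eqref{idealH1}. Thus the restriction map identifies $\Alg(H^{\langle 1\rangle},R)$ with the subset of those $f \in \Alg(H,R)$ satisfying $f(\varphi(x_1)\,x_2) = f(\varphi(x_2)\,x_1)$ for all $x \in H$ and $\varphi \in \Hom(H,k)$. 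First I would record this reduction, which is purely formal.

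The core of the proof is then to translate the vanishing condition into the stated commutation condition in the convolution monoid $\Hom(H,R)$. Given $\varphi \in \Hom(H,k)$, I would regard $\varphi\,1_R$ as the element of $\Hom(H,R)$ sending $x \mapsto \varphi(x)\,1_R$. The key computation is to expand the two convolution products $f * (\varphi\,1_R)$ and $(\varphi\,1_R) * f$ using~\eqref{convol}. Explicitly, for $x \in H$,
\begin{equation*}
\bigl(f * (\varphi\,1_R)\bigr)(x) = f(x_1)\,\varphi(x_2)\,1_R = f\bigl(\varphi(x_2)\,x_1\bigr),
\end{equation*}
while
\begin{equation*}
\bigl((\varphi\,1_R) * f\bigr)(x) = \varphi(x_1)\,f(x_2)\,1_R = f\bigl(\varphi(x_1)\,x_2\bigr),
\end{equation*}
where I have used that $f$ is $R$-linear in the obvious sense and that $R$ is commutative so the scalar $1_R$ factors out. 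Hence the equality $f\bigl(\varphi(x_1)\,x_2\bigr) = f\bigl(\varphi(x_2)\,x_1\bigr)$ for all $x$ is exactly the statement that $f$ commutes with $\varphi\,1_R$ under the convolution product. Running this over all $\varphi \in \Hom(H,k)$ matches the vanishing-on-$I$ condition from the first paragraph.

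Assembling the two paragraphs gives the result: $f \in \Alg(H,R)$ descends to $H^{\langle 1\rangle}$ precisely when $f$ commutes in $\Hom(H,R)$ with every element of $\Hom(H,k)\,1_R$, which is the claimed description. I would also note that it suffices to check commutation with the generators $\varphi\,1_R$ rather than arbitrary elements, since commutation with each generator gives commutation with the whole subset $\Hom(H,k)\,1_R$ (this subset is exactly $\{\varphi\,1_R : \varphi \in \Hom(H,k)\}$, so no further closure argument is needed).

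I expect no serious obstacle here; the only point requiring slight care is bookkeeping the direction of the convolution, making sure that $f * (\varphi\,1_R)$ corresponds to $\varphi(x_2)\,x_1$ and $(\varphi\,1_R) * f$ to $\varphi(x_1)\,x_2$, so that the \emph{symmetric} commutation condition $f * (\varphi\,1_R) = (\varphi\,1_R) * f$ correctly reproduces the \emph{symmetric} defining relation~\eqref{idealH1}. Since the generators of~$I$ are antisymmetric in the two terms, demanding the commutator to vanish is the natural reformulation, and this matches up cleanly.
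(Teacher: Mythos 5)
Your proof is correct and follows essentially the same route as the paper's (one-line) argument: an algebra morphism factors through $H^{\langle 1\rangle}$ iff it kills the generators~\eqref{idealH1}, and the convolution formulas $\bigl(f * (\varphi\, 1_R)\bigr)(x) = f\bigl(\varphi(x_2)\, x_1\bigr)$ and $\bigl((\varphi\, 1_R) * f\bigr)(x) = f\bigl(\varphi(x_1)\, x_2\bigr)$ identify that vanishing condition with $f * (\varphi\, 1_R) = (\varphi\, 1_R) * f$ for all $\varphi \in H^*$. The paper states this equivalence without writing out the convolution expansion; your version simply makes the bookkeeping explicit, including the (correct) observation that vanishing on the ideal's generators suffices because $f$ is multiplicative.
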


\pf
An element $\psi \in \Alg(H,R)$ factors through~$H^{\langle 1 \rangle}$ if and only if
it vanishes on the elements~\eqref{idealH1}, i.e., if and only if
$\psi * \varphi = \varphi * \psi$ for all $\varphi \in H^*$.
\epf

Consider the case where $R$ is the ground field~$k$. It is well known that
if $k$ is algebraically closed of characteristic zero and $H$ is commutative,
then $\Alg(H,k)$ separates the points of~$H$. 
We can then sharpen the previous proposition and state that under the previous conditions,
$\Alg(H^{\langle 1 \rangle},k)$ coincides with the center of the group~$\Alg(H,k)$.

For any Hopf algebra $H$, we denote
by $H_\ab$ the quotient of $H$ by the two-sided ideal generated 
by all commutators $[x,y] = xy- yx$ of elements $x,y$ of~$H$.
Since
$$\Delta([x,y]) = [x_1,y_1] \otimes x_2y_2 + y_1 x_1 \otimes [x_2,y_2] \, ,$$
$\varepsilon([x,y]) = 0$, and $S([x,y]) = - [S(x), S(y)]$,
the Hopf structure on~$H$ induces a commutative
Hopf algebra structure on~$H_\ab$.
We call~$H_\ab$ the \emph{abelianization} of~$H$. 
We are now ready to give the promised alternative description of~$\H^{\ell}_1(H)$.

\begin{proposition}\label{HL1-prop}
For any Hopf algebra~$H$ there is 
a Hopf algebra isomorphism
\begin{equation}\label{alternateHL1}
\H^{\ell}_1(H) \cong (H^{\langle 1 \rangle})_\ab \, .
\end{equation}
\end{proposition}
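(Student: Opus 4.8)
The plan is to exhibit the isomorphism~\eqref{alternateHL1} by first abelianizing $H^{\langle 1 \rangle}$, then comparing the resulting free commutative Hopf algebra presentation with the defining presentation of $\H^{\ell}_1(H)$. Recall that $\H^{\ell}_1(H)$ is the quotient of $F(H^{[1]})$ by the ideal generated by $t(\underline{xy}) - t(\underline{x})\, t(\underline{y})$, while $(H^{\langle 1 \rangle})_\ab$ is obtained from $H$ by imposing both the laziness relations~\eqref{idealH1} (defining $H^{\langle 1 \rangle}$) and the commutativity relations. The key structural observation is that $F(H^{[1]})$ is the \emph{free} commutative Hopf algebra on the cocommutative coalgebra $H^{[1]}$, so forcing $t$ to be multiplicative in $\H^{\ell}_1(H)$ should make it the free commutative Hopf algebra on the \emph{bialgebra} (indeed Hopf algebra) $H^{[1]}$, which ought to coincide with the abelianization of $H^{\langle 1 \rangle}$.

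First I would set up both sides via their universal properties with respect to $\Alg(-,R)$ for an arbitrary commutative algebra~$R$, since by Yoneda an isomorphism of these representable functors (compatibly with the Hopf structure) yields the desired Hopf algebra isomorphism. On one side, by Theorem~\ref{UCT1} we have $\Alg(\H^{\ell}_1(H),R) \cong \H_{\ell}^1(H,R) = \Alg(H,R) \cap \Reg^1_{\ell}(H,R)$, namely the algebra morphisms $\mu : H \to R$ that are lazy in the sense of~\eqref{lazy1} and satisfy $\mu(1)=1$. On the other side, an element of $\Alg((H^{\langle 1 \rangle})_\ab, R)$ is, because $R$ is commutative, just an algebra morphism $H \to R$ factoring through $H^{\langle 1 \rangle}$; and by the proposition preceding the statement, these are precisely the $\psi \in \Alg(H,R)$ commuting in $\Hom(H,R)$ with every element of $\Hom(H,k)\,1_R$ under convolution. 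The main verification is then that, for an algebra morphism $\mu : H \to R$, the laziness condition~\eqref{lazy1} is equivalent to the convolution-commutation condition $\mu * \varphi = \varphi * \mu$ for all $\varphi \in H^*$. This is essentially the content already recorded in Proposition~\ref{C1motivation} (where the laziness equalities~\eqref{lazy1} are exactly what make $\varphi$ factor through $H^{[1]}$) combined with the proof of the last displayed proposition, so both sides represent the same functor $R \mapsto \Alg(H,R)\cap \Reg^1_{\ell}(H,R)$.

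Concretely I would construct the isomorphism directly rather than only through representability, to pin down the Hopf structure. The canonical coalgebra projection $H \to H^{[1]}$ induces, by the universal property of $F$ (Proposition~\ref{F-univprop}) together with the multiplicativity relation~\eqref{t-mult}, a Hopf algebra map; and the laziness relations defining $H^{[1]}$ match the relations~\eqref{idealH1} defining $H^{\langle 1 \rangle}$, so one gets a well-defined Hopf algebra morphism $(H^{\langle 1 \rangle})_\ab \to \H^{\ell}_1(H)$ sending the class of $x$ to $t(\underline{x})^\dagger$. For the inverse, I would use that $\H^{\ell}_1(H)$ is generated as an algebra by the $t(\underline{x})^\dagger$ and ${t}^{-1}(\underline{x})^\dagger$; by~\eqref{t-mult} and Lemma~\ref{tbar-mult} both families are multiplicative in $x$, and by~\eqref{ttbar} the ${t}^{-1}(\underline{x})^\dagger$ are the convolution/antipode images of the $t(\underline{x})^\dagger$, so a single assignment $t(\underline{x})^\dagger \mapsto \overline{x}$ extends to an algebra morphism in the opposite direction, compatible with the coproducts displayed after Proposition~\ref{HL1-Hopf}.

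The step I expect to be the main obstacle is checking that these two maps are mutually inverse at the level of algebra generators, which amounts to verifying that the relation $t(\underline{xy})^\dagger = t(\underline{x})^\dagger t(\underline{y})^\dagger$ imposed in $\H^{\ell}_1(H)$, \emph{after} abelianizing, produces exactly the same quotient as first passing to $H^{\langle 1 \rangle}$ and then abelianizing---no more relations and no fewer. The subtlety is that $F(H^{[1]})$ introduces a formal convolution inverse $t^{-1}$ that has no a priori counterpart in $H^{\langle 1 \rangle}$, so I must confirm that imposing multiplicativity of $t$ forces multiplicativity of $t^{-1}$ (which is precisely Lemma~\ref{tbar-mult}) and that nothing beyond the relations~\eqref{idealH1} survives. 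Once this identification of generators and relations is established, commutativity on both sides and the displayed formulas for $\Delta$, $\varepsilon$, and $S$ make the compatibility with the full Hopf structure a routine check.
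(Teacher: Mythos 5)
Your proposal is correct and, in its concrete part, coincides with the paper's own proof: the paper constructs the same pair of mutually inverse Hopf algebra morphisms, obtaining $\H^{\ell}_1(H) \to (H^{\langle 1 \rangle})_\ab$ by applying Proposition~\ref{F-univprop} to the coalgebra surjection $H^{[1]} \to (H^{\langle 1 \rangle})_\ab$ and using \eqref{t-mult} to kill the defining ideal, and $(H^{\langle 1 \rangle})_\ab \to \H^{\ell}_1(H)$, $x^\circ \mapsto t(\underline{x})^\dagger$, by factoring $x \mapsto t(\underline{x})^\dagger$ through $H^{\langle 1 \rangle}$ (via \eqref{strongcomm} and multiplicativity) and invoking the commutativity of~$\H^{\ell}_1(H)$. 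Only note that the universal property of~$F$ is what makes the assignment $t(\underline{x}) \mapsto x^\circ$ well defined on all of~$F(H^{[1]})$ (it is used for the map \emph{out of}~$F(H^{[1]})$, not for the map into~$\H^{\ell}_1(H)$ as your wording suggests), and the representability detour via Theorem~\ref{UCT1} is sound but unnecessary.
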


\begin{proof}
 The composition of the canonical projections
 $H^{[1]} \to H^{\langle 1 \rangle} \to (H^{\langle 1 \rangle})_\ab$
 induces a surjective coalgebra morphism
 $H^{[1]} \to (H^{\langle 1 \rangle})_\ab$, 
 which by Proposition~\ref{F-univprop} induces
 a Hopf algebra morphism $p : F(H^{[1]}) \to (H^{\langle 1 \rangle})_\ab$.
 If we denote the class of an element $x \in H$ 
 in~$(H^{\langle 1 \rangle})_\ab$ by~$x^\circ$, 
 then $p(t(\underline{x})) = x^\circ$ and
 $$p\bigl(t(\underline{xy})\bigr) 
 = (xy)^\circ = x^\circ \, y^\circ 
 = p\bigl(t(\underline{x})\bigr) \, p\bigl(t(\underline{y})\bigr) 
 = p\bigl(t(\underline{x})\, t(\underline{y})\bigr)$$
 for all $x,y \in H$. Hence,
 $p$ induces a (surjective) Hopf algebra morphism 
 $\H^{\ell}_1(H) \to (H^{\langle 1 \rangle})_\ab$. 
 
An inverse isomorphism is constructed as follows.
By~\eqref{strongcomm}, 
the Hopf algebra morphism 
$$H \to \H^{\ell}_1(H)\, , \; x \mapsto t(\underline{x})^\dagger$$
induces a Hopf algebra morphism
$H^{\langle 1 \rangle} \to \H^{\ell}_1(H)$.
Since by Proposition~\ref{HL1-Hopf}
the Hopf algebra~$\H^{\ell}_1(H)$ is commutative, 
the latter morphism induces a Hopf algebra morphism 
$$(H^{\langle 1\rangle})_\ab \to \H^{\ell}_1(H) \, , \; 
x^\circ \mapsto t(\underline{x})^\dagger \, ,$$
which is the required inverse isomorphism.
\end{proof}

\begin{corollary}
If $H$ is a finite-dimensional Hopf algebra, then so is~$\H^{\ell}_1(H)$.
\end{corollary}

Proposition~\ref{HL1-prop} also allows us to compute the first lazy homology
of the Hopf algebra $k^G$ of functions on a finite group~$G$.

\begin{proposition}\label{HL1k^G}
For any finite group~$G$, we have
\begin{equation*}
\H^{\ell}_1(k^G) \cong k^{Z(G)} \, ,
\end{equation*}
where $k^{Z(G)}$ is the Hopf algebra of functions on the center of~$G$.
\end{proposition}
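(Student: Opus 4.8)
The plan is to exploit the alternative description of the first lazy homology provided by Proposition~\ref{HL1-prop}, namely the isomorphism $\H^{\ell}_1(H) \cong (H^{\langle 1 \rangle})_\ab$. Since $H = k^G$ is already commutative, its abelianization is itself, so the entire computation reduces to identifying the Hopf algebra $H^{\langle 1 \rangle}$, the quotient of $k^G$ by the two-sided ideal~$I$ generated by the elements $\varphi(x_1)\, x_2 - \varphi(x_2)\, x_1$ for $x \in k^G$ and $\varphi \in (k^G)^*$. The goal is to show $H^{\langle 1 \rangle} \cong k^{Z(G)}$, i.e.\ that the laziness relations precisely kill the functions that fail to be central-class functions, leaving exactly the algebra of functions on the center.

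First I would set up the standard dualities. Let $\{e_g\}_{g \in G}$ be the basis of idempotents of $k^G$ dual to the group elements, so that $\Delta(e_g) = \sum_{hk = g} e_h \otimes e_k$, and recall that $(k^G)^* \cong k[G]$ with the group element $h \in G$ acting on $k^G$ as the evaluation functional $\langle h, e_g\rangle = \delta_{g,h}$. With $x = e_g$ and $\varphi = h \in G \subset (k^G)^*$, I would compute both $\varphi(x_1)\, x_2$ and $\varphi(x_2)\, x_1$ explicitly using the coproduct above. The generator $h(e_{g,1})\, e_{g,2} - h(e_{g,2})\, e_{g,1}$ should reduce, after summing over the factorizations of $g$, to a difference of the form $e_{h^{-1}g} - e_{gh^{-1}}$ (up to reindexing), so that the ideal~$I$ is generated by the elements $e_{h^{-1}g} - e_{g h^{-1}}$, equivalently $e_{hgh^{-1}} - e_g$, for all $g, h \in G$. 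I expect that sorting out exactly which conjugate appears, and confirming that ranging over all $h$ and $g$ yields precisely the relations $e_g = e_{hgh^{-1}}$, is the main technical obstacle — the bookkeeping with the Sweedler notation for the convolution coalgebra of $k^G$ is the one place where a sign or an inverse could go astray.

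Once the ideal is identified, the conclusion is essentially immediate. The relations $e_g \equiv e_{hgh^{-1}}$ force $e_g = e_{g'}$ in the quotient whenever $g$ and $g'$ are conjugate; thus $H^{\langle 1 \rangle}$ has a basis indexed by conjugacy classes, and the residue of $e_g$ is nonzero only when $\{g\}$ is a singleton conjugacy class, i.e.\ when $g \in Z(G)$. I would verify that the quotient map $k^G \to H^{\langle 1 \rangle}$ sends $e_g \mapsto 0$ for noncentral $g$ and restricts to an isomorphism onto $k^{Z(G)}$ on the span of the central idempotents, and that this identification is compatible with the induced coproduct, counit, and antipode — which it is, since the Hopf structure on $k^{Z(G)}$ is just the restriction of that on $k^G$ to the (central, hence normal) subgroup $Z(G)$. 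Combining this with $H^{\langle 1 \rangle} = (H^{\langle 1 \rangle})_\ab$ (commutativity) and Proposition~\ref{HL1-prop} gives $\H^{\ell}_1(k^G) \cong k^{Z(G)}$, as desired.
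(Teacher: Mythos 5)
Your overall route is the same as the paper's: reduce via Proposition~\ref{HL1-prop} to computing $H^{\langle 1\rangle}$ (the abelianization being trivial since $k^G$ is commutative), and identify the ideal $I$ as generated by the elements $e_{h^{-1}g}-e_{gh^{-1}}$, equivalently $e_{hgh^{-1}}-e_g$, for $g,h\in G$. Your coproduct bookkeeping here is correct and matches the paper's computation with the dual basis; the step you flag as the ``main technical obstacle'' is in fact unproblematic.

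The genuine gap is in the step you call ``essentially immediate,'' and your justification there is internally inconsistent. Passing to the quotient by the \emph{span} of the elements $e_{hgh^{-1}}-e_g$ only identifies conjugate idempotents and yields a space of dimension equal to the number of conjugacy classes of~$G$ (class functions) --- strictly larger than $\dim k^{Z(G)}=|Z(G)|$ whenever $G$ is nonabelian. So the claim that ``$H^{\langle 1\rangle}$ has a basis indexed by conjugacy classes'' is false, and it contradicts your very next assertion that the residue of $e_g$ vanishes for noncentral~$g$; that vanishing is exactly what needs proof, and it does not follow from the identification $e_g\equiv e_{hgh^{-1}}$ alone. The missing idea --- the actual content of the paper's proof --- is to exploit that $I$ is a two-sided \emph{ideal} of the algebra $k^G$ together with the orthogonality of the idempotents: if $g\notin Z(G)$, choose $h$ with $hgh^{-1}\neq g$; then
\begin{equation*}
e_g \,=\, e_g^2 \,=\, e_g^2 - e_g\,e_{hgh^{-1}} \,=\, e_g\,\bigl(e_g - e_{hgh^{-1}}\bigr) \in I\,,
\end{equation*}
so $e_g$ itself dies in $H^{\langle 1\rangle}$, not merely gets identified with its conjugates. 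With this, $H^{\langle 1\rangle}$ is spanned by the central idempotents, and the restriction map $k^G\to k^{Z(G)}$ (which kills every generator $e_{hg}-e_{gh}$, since $hg$ is central if and only if $gh=h^{-1}(hg)h$ is, in which case the two coincide) shows no further collapse occurs, giving the Hopf algebra isomorphism $H^{\langle 1\rangle}\cong k^{Z(G)}$ and hence $\H^{\ell}_1(k^G)\cong k^{Z(G)}$.
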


\pf
For each $g\in G$, let $e_g$ be the function on~$G$ that is zero everywhere, except at the point~$g$,
where it takes the value~$1$. The elements $(e_g)_{g\in G}$ form a basis of~$H = k^G$.
Let $(\psi_g)_{g\in G}$ be the dual basis. Then $H^{\langle 1 \rangle}$ is the quotient of~$H$ by
the ideal generated by
$$\sum_{a\in G}\, \psi_{h^{-1}}(e_a) \, e_{a^{-1}g} - \sum_{a\in G}\, e_{ga^{-1}} \, \psi_{h^{-1}}(e_a)$$
for all $g,h \in G$.
The first sum reduces to~$e_{hg}$, while the second sum reduces to~$e_{gh}$.
Therefore, $H^{\langle 1 \rangle}$ is the quotient of~$H$ by
the ideal generated by $w_{g,h} = e_{hg} - e_{gh}$ for all $g,h \in G$.
Now, $w_{g,h}= 0$ if $g$ belongs to the center of~$G$. If $g$ is not in the center, then
there is $h$ such that $hgh^{-1} \neq g$; for such an element~$h$, we have
$$e_g = e_g^2 = e_g^2 - e_g e_{hgh^{-1}} = e_g w_{g,hgh^{-1}}\, ,$$
which shows that the image of~$e_g$ in~$H^{\langle 1 \rangle}$ is zero.
From this one easily deduces that~$H^{\langle 1 \rangle} \cong k^{Z(G)}$.
The conclusion follows from Proposition~\ref{HL1-prop} and the commutativity of~$H$,
hence of~$H^{\langle 1 \rangle}$.
\epf

\subsection{An interpretation of~$\H^{\ell}_1(H)$ as a homology group}\label{HL1-hom}

We could have defined~$\H^{\ell}_1(H)$ as the right-hand side of~\eqref{alternateHL1}.
Nevertheless, the definition we gave in Section~\ref{HL1a}
will allow us to present~$\H^{\ell}_1(H)$ as a kind of homology group
(see Proposition~\ref{HS1=HL1} below).
Such a presentation will be used in the next section to define
a lazy homology group~$\H^{\ell}_2(H)$.

We first construct a Hopf algebra morphism
generalizing the differential~$d^{\SW}_2$ of Lemma~\ref{d2-cocomm}.

\begin{lemma}\label{d2}
For any Hopf algebra~$H$ there is a Hopf algebra morphism
\begin{equation*}
d_2 : F(H^{[2]}) \to F(H^{[1]}) 
\end{equation*}
such that for all $x,y \in H$,
\begin{equation}\label{d2c}
d_2 \bigl(t(\widetilde{x \otimes y})\bigr)
= t(\underline{x_1}) \, t(\underline{y_1}) \, t^{-1}(\underline{x_2y_2})  \, .
\end{equation}
Moreover,
\begin{equation}\label{d2normal}
d_2(a_1) \otimes a_2 = d_2(a_2) \otimes a_1
\end{equation} 
for all $a \in F(H^{[2]})$.
\end{lemma}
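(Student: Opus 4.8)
The plan is to define $d_2$ in two stages. First, I will use the universal property of Takeuchi's functor $F$ (Proposition~\ref{F-univprop}) by producing a coalgebra morphism from $H^{[2]}$ into the commutative Hopf algebra $F(H^{[1]})$; this will then lift uniquely to a Hopf algebra morphism $F(H^{[2]}) \to F(H^{[1]})$. The obvious candidate is the map
$$
\delta : H^{[2]} \to F(H^{[1]}), \qquad
\widetilde{x \otimes y} \mapsto t(\underline{x_1}) \, t(\underline{y_1}) \, t^{-1}(\underline{x_2 y_2}),
$$
which formula~\eqref{d2c} forces upon us. The first task is therefore to verify that this $\delta$ is \emph{well defined}, i.e.\ that the right-hand side is unchanged when $x \otimes y$ is replaced by any representative of the same class in $H^{[2]}$. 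Concretely, I must check that $\delta$ kills the spanning relations~\eqref{H2-def}; this is where the ``lazy-cocommutativity'' identity~\eqref{lazycomm} of Proposition~\ref{H2coalgebra} enters. Applying $\delta$ to $\varphi(x_2 y_2)\, x_1 \otimes y_1 - \varphi(x_1 y_1)\, x_2 \otimes y_2$ and using~\eqref{lazycomm} together with the strong-cocommutativity~\eqref{strongcomm} of $H^{[1]}$ should produce zero.

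Next I would check that $\delta$ is a \emph{coalgebra morphism}, using the explicit coproduct and counit of $H^{[2]}$ from Proposition~\ref{H2coalgebra} and of $F(H^{[1]})$ from~\eqref{Hopf-on-Takeuchi}. Applying $\Delta$ to $\delta(\widetilde{x \otimes y})$ and comparing with $(\delta \otimes \delta)\Delta(\widetilde{x \otimes y})$ is a Sweedler-notation bookkeeping computation; the compatibility with counits is immediate from $\varepsilon(t(\underline{z})) = \varepsilon(z)$. Granting this, Proposition~\ref{F-univprop} yields a unique Hopf algebra morphism $d_2 : F(H^{[2]}) \to F(H^{[1]})$ with $d_2 \circ t = \delta$, and in particular~\eqref{d2c} holds. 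Here $t$ denotes the canonical map $H^{[2]} \to F(H^{[2]})$.

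For the normality relation~\eqref{d2normal}, the plan is to reduce it to the generators. Since $d_2$ is a Hopf algebra morphism and both sides of~\eqref{d2normal} define, as $a$ varies, coalgebra-type maps that are multiplicative in a suitable sense, it suffices to verify~\eqref{d2normal} on the algebra generators $t(\widetilde{x \otimes y})$ and $t^{-1}(\widetilde{x \otimes y})$ of $F(H^{[2]})$ and then check that the set of $a$ satisfying~\eqref{d2normal} is closed under multiplication. On a generator $a = t(\widetilde{x \otimes y})$, the coproduct is $\Delta(a) = t(\widetilde{x_1 \otimes y_1}) \otimes t(\widetilde{x_2 \otimes y_2})$, so~\eqref{d2normal} becomes
$$
d_2\bigl(t(\widetilde{x_1 \otimes y_1})\bigr) \otimes t(\widetilde{x_2 \otimes y_2})
= d_2\bigl(t(\widetilde{x_2 \otimes y_2})\bigr) \otimes t(\widetilde{x_1 \otimes y_1}),
$$
which should follow from the lazy-cocommutativity~\eqref{lazycomm} after expanding $d_2$ via~\eqref{d2c}.

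\textbf{The main obstacle} I anticipate is the well-definedness check, together with the verification of~\eqref{d2normal} on generators: both require carefully exploiting the laziness identities~\eqref{lazycomm} and~\eqref{strongcomm} to move tensor factors past one another, and the Sweedler indices must be tracked with care since $H$ itself is not assumed cocommutative. Once these identities are deployed correctly, the remaining steps (coalgebra-morphism property, multiplicativity closure for~\eqref{d2normal}) are routine diagram-chasing computations with the explicit formulas already recorded in the excerpt.
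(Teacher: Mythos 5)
Your proposal is correct and matches the paper's own proof essentially step for step: the paper likewise checks that the formula defines a coalgebra morphism on $H \otimes H$ that factors through $H^{[2]}$ via the identities \eqref{strongcomm}, lifts it with Proposition~\ref{F-univprop}, and proves \eqref{d2normal} by reducing to the generators $t(\widetilde{x \otimes y})$ using that $d_2$ is a Hopf algebra morphism, with the computation resting on \eqref{lazycomm} and \eqref{strongcomm} exactly as you anticipate. The only cosmetic differences are the order of the well-definedness and coalgebra-morphism checks and that you spell out the multiplicative-closure argument the paper leaves implicit.
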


\begin{proof}
Consider the map $d'_2 : H\otimes H \to F(H^{[1]})$ given by
$$d'_2(x \otimes y)
= t(\underline{x_1}) \, t(\underline{y_1}) \, t^{-1}(\underline{x_2y_2})$$
for all $x,y \in H$.
Let us show that $d'_2$ is a coalgebra morphism. 
On one hand, by~\eqref{Hopf-on-Takeuchi}, we have
\begin{align*}
\Delta\bigl(d'_2(x \otimes y) \bigr)
& = \Delta\bigl(t(\underline{x_1}) \,   t(\underline{y_1}) \, 
t^{-1}(\underline{x_2y_2}) \bigr) \\
& = t(\underline{x_1}) \, t(\underline{y_1}) \, t^{-1}(\underline{x_4y_4}) 
\otimes t(\underline{x_2}) \, t(\underline{y_2}) \, t^{-1}(\underline{x_3y_3}) \, .
\end{align*}
On the other hand,
\begin{align*}
(d'_2 \otimes d'_2) \bigl( \Delta (x\otimes y)\bigr)
& = (d'_2 \otimes d'_2) ( x_1 \otimes y_1 \otimes x_2 \otimes y_2) \\
& = t(\underline{x_1}) \, t(\underline{y_1}) \, t^{-1}(\underline{x_2y_2}) 
\otimes t(\underline{x_3}) \, t(\underline{y_3}) \, t^{-1}(\underline{x_4y_4}) \, .
\end{align*}
These two expressions are equal in view of the 
strong-cocommutativity identities~\eqref{strongcomm}
of~$H^{[1]}$.
It is easy to check that
$\varepsilon \bigl(d'_2(x \otimes y) \bigr) = \varepsilon(x)\, \varepsilon(y)$
for all $x,y \in H$. We have thus shown that $d'_2$ is a coalgebra morphism. 

We next claim that $d'_2$ factors through~$H^{[2]}$.
In view of~\eqref{H2-def} we have to check that
$$d'_2 \bigl( \varphi(x_2y_2) \, x_1\otimes y_1 
- \varphi(x_1y_1) \, x_2\otimes y_2 \bigr) = 0$$
for all $\varphi \in H^*$ and $x,y \in H$.
Now the left-hand side is equal to
$$\varphi(x_3y_3) \, t(\underline{x_1}) \, 
t(\underline{y_1}) \, t^{-1}(\underline{x_2y_2})
- \varphi(x_1y_1) \, t(\underline{x_2}) \, 
t(\underline{y_2}) \, t^{-1}(\underline{x_3y_3})  \, ,$$
which vanishes in view of~\eqref{strongcomm}.
By Proposition~\ref{F-univprop},
the induced coalgebra morphism $H^{[2]} \to F(H^{[1]})$
induces the desired Hopf algebra morphism 
$d_2 : F(H^{[2]}) \to F(H^{[1]})$.

Let us now prove~\eqref{d2normal}.
Since $d_2$ is a Hopf algebra morphism, it is enough to prove the required identity 
for~$a = t(\widetilde{x \otimes y})$, where $x,y \in H$.
We have
\begin{align*}
d_2 \bigl(t(\widetilde{x \otimes y})_1 \bigr) \otimes t(\widetilde{x \otimes y})_2 
& = d_2\bigl(t(\widetilde{x_1 \otimes y_1}) \bigr)
\otimes t(\widetilde{x_2 \otimes y_2})\\
& = t(\underline{x_1}) \, t(\underline{y_1}) \, {t}^{-1}(\underline{x_2y_2}) 
\otimes t(\widetilde{x_3 \otimes y_3})\\
& = t(\underline{x_1}) \, t(\underline{y_1}) \, {t}^{-1}(\underline{x_3y_3}) \otimes
t(\widetilde{x_2 \otimes y_2}) \\
& = t(\underline{x_2}) \, t(\underline{y_2}) \, {t}^{-1}(\underline{x_3y_3}) \otimes
t(\widetilde{x_1 \otimes y_1}) \\
& = d_2 \bigl(t(\widetilde{x_2 \otimes y_2}) \bigr) \otimes t(\widetilde{x_1 \otimes y_1})\\
& = d_2 \bigl(t(\widetilde{x \otimes y})_2 \bigr) \otimes t(\widetilde{x \otimes y})_1 \, .
\end{align*}
We have used the lazy-cocommutativity identities~\eqref{lazycomm} in~$H^{[2]}$ 
for the third equality above and the strong-cocommutativity identities~\eqref{strongcomm}
in~$H^{[1]}$ for the fourth equality.
\end{proof}

As a consequence of Lemma~\ref{d2} and Section~\ref{HKer},
we obtain the following.

\begin{corollary}\label{HKerd2}
The Hopf algebra morphism $d_2$ is normal and 
\begin{equation*}
\HKer(d_2) = \{ a \in F(H^{[2]}) \ | \ d_2(a_1) \otimes a_2 = 1 \otimes a\}
= \{ a \in F(H^{[2]}) \ | \ a_1 \otimes d(a_2) = a \otimes 1\} \, .
\end{equation*}
\end{corollary}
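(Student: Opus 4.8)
The plan is to invoke the general theory of normal Hopf algebra morphisms recalled in Section~\ref{HKer}, applied to the morphism $d_2$ just constructed. The crucial observation is that the normality condition~\eqref{normal} is precisely the equality of the two sets
\[
\{ a \in F(H^{[2]}) \ | \ d_2(a_1) \otimes a_2 = 1 \otimes a\}
\quad\text{and}\quad
\{ a \in F(H^{[2]}) \ | \ a_1 \otimes d_2(a_2) = a \otimes 1\} \, ,
\]
and that Lemma~\ref{d2} has already handed us the key ingredient, namely the identity~\eqref{d2normal}, which says $d_2(a_1)\otimes a_2 = d_2(a_2)\otimes a_1$ for all $a$.

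First I would verify normality directly. Given $a$ in the left-hand set, so that $d_2(a_1)\otimes a_2 = 1\otimes a$, I would apply~\eqref{d2normal} to rewrite $d_2(a_1)\otimes a_2$ as $d_2(a_2)\otimes a_1$; swapping the tensor factors (using that $F(H^{[2]})\otimes F(H^{[2]})$ carries the flip, or more cleanly applying the flip to the resulting equation) then yields $a_1 \otimes d_2(a_2) = a\otimes 1$, placing $a$ in the right-hand set. The reverse inclusion is symmetric, using~\eqref{d2normal} in the same way. This establishes that $d_2$ is normal in the sense of~\eqref{normal}.

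Once normality is in hand, the description of $\HKer(d_2)$ is immediate from the machinery of Section~\ref{HKer}: by~\cite[Lemma~1.1.4]{ad}, when $\pi$ is normal both sides of~\eqref{normal} coincide with the Hopf kernel defined in~\eqref{Hopfkernel}. Thus I would simply quote that lemma, with $\pi = d_2$, to conclude that
\[
\HKer(d_2) = \{ a \in F(H^{[2]}) \ | \ d_2(a_1) \otimes a_2 = 1 \otimes a\}
= \{ a \in F(H^{[2]}) \ | \ a_1 \otimes d_2(a_2) = a \otimes 1\} \, ,
\]
which is exactly the asserted statement of the corollary.

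I do not expect any serious obstacle here, as the real work was done in proving~\eqref{d2normal} inside Lemma~\ref{d2}; this corollary is essentially a formal consequence. The only point requiring mild care is the bookkeeping in the flip step: one must be sure that applying~\eqref{d2normal} followed by the flip of tensor factors genuinely converts the ``left'' condition into the ``right'' condition, rather than producing a spurious variant. I would write this transition out explicitly to avoid confusion, but it amounts to a one-line manipulation once~\eqref{d2normal} is available.
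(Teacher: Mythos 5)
Your proof is correct and takes essentially the same route as the paper, which states this corollary as an immediate consequence of Lemma~\ref{d2} and Section~\ref{HKer}: identity~\eqref{d2normal} plus the flip manipulation you spell out gives normality in the sense of~\eqref{normal}, and then \cite[Lemma~1.1.4]{ad} identifies both sides with $\HKer(d_2)$ as in~\eqref{Hopfkernel}. (You also tacitly corrected the statement's typo $d(a_2)$ to $d_2(a_2)$, which is the intended reading.)
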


By Lemma~\ref{d2}, the image $\Im(d_2)$ of~$d_2$
is a Hopf subalgebra of the commutative 
cocommutative Hopf algebra~$F(H^{[1]})$,
and it is cocommutative.
Using the convention of Section~\ref{HKer} (see~\eqref{quotient}),
we can define the quotient $F(H^{[1]}) \quot \Im(d_2)$
as the quotient of~$F(H^{[1]})$ by the ideal generated
by the augmentation ideal~$\Im(d_2)^+$ of~$\Im(d_2)$.

We now express $\H^{\ell}_1(H)$ in terms of this homology-like quotient.

\begin{proposition}\label{HS1=HL1}
For any Hopf algebra~$H$, there is an isomorphism of Hopf algebras
$$\H^{\ell}_1(H)  \cong F(H^{[1]}) \quot \Im(d_2)  
= {\HKer}(\varepsilon) \quot \Im(d_2)\, .$$
If in addition $H$ is cocommutative, then 
$$\H^{\ell}_1(H) = \H^{\SW}_1(H) \, . $$
\end{proposition}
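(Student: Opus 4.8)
The plan is to establish the two assertions separately. For the first isomorphism, I would show that $\H^{\ell}_1(H)$, which was defined in Section~\ref{HL1a} as the quotient of $F(H^{[1]})$ by the ideal generated by the elements $t(\underline{xy}) - t(\underline{x})\,t(\underline{y})$, coincides with the Hopf quotient $F(H^{[1]}) \quot \Im(d_2)$. By the explicit formula~\eqref{d2c}, the image of $d_2$ is the Hopf subalgebra of $F(H^{[1]})$ generated by the elements $t(\underline{x_1})\,t(\underline{y_1})\,t^{-1}(\underline{x_2y_2})$. Forming the Hopf quotient means killing the augmentation ideal $\Im(d_2)^+$, which amounts to imposing $t(\underline{x_1})\,t(\underline{y_1})\,t^{-1}(\underline{x_2y_2}) = \eps(x)\,\eps(y)\,1$ for all $x,y$. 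The key step is to check that this relation is equivalent, modulo the relations already present in $F(H^{[1]})$ (in particular~\eqref{ttbar} together with the strong-cocommutativity~\eqref{strongcomm}), to the multiplicativity relation $t(\underline{xy}) = t(\underline{x})\,t(\underline{y})$ defining $\H^{\ell}_1(H)$. I would verify both directions by a Heyneman--Sweedler computation: multiplying the quotient relation on the right by $t(\underline{x_3y_3})$ and using~\eqref{ttbar} recovers $t(\underline{xy}) = t(\underline{x})\,t(\underline{y})$, and conversely the multiplicativity relation immediately makes the displayed element equal to $t(\underline{x_1})\,t(\underline{y_1})\,t^{-1}(\underline{x_2})\,t^{-1}(\underline{y_2})$, which collapses to $\eps(x)\eps(y)$ by~\eqref{ttbar} and cocommutativity. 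The equality $F(H^{[1]}) = \HKer(\eps)$ is just the observation recorded in Section~\ref{HKer} that $\HKer(\eps) = H$ applied to the Hopf algebra $F(H^{[1]})$, since the relevant counit here is that of $F(H^{[1]})$ itself.

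For the second assertion, suppose $H$ is cocommutative. Then by the remarks following Corollary~\ref{C1cocomm} and Proposition~\ref{H2coalgebra} we have $H^{[1]} = H$ and $H^{[2]} = H$, so $F(H^{[1]}) = F(H)$ and $F(H^{[2]}) = F(H^{\otimes 2})$. I would then compare the morphism $d_2$ of Lemma~\ref{d2} with the Sweedler differential $d^{\SW}_2$ of Lemma~\ref{d2-cocomm}. Under the identifications above, formula~\eqref{d2c} for $d_2$ becomes $t(\underline{x}\,\underline{y}) \mapsto t(x_1)\,t(y_1)\,t^{-1}(x_2y_2)$, which is exactly the reformulation~\eqref{d2a} of $d^{\SW}_2$ valid in the cocommutative case. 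Hence $d_2 = d^{\SW}_2$ as Hopf algebra morphisms $F(H^{\otimes 2}) \to F(H)$.

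Finally I would match the two homology descriptions. On one side, $\H^{\SW}_1(H)$ is by Definition~\ref{HS-hom} the first homology of the complex~\eqref{complex}, computed in the abelian category of commutative cocommutative Hopf algebras, so it is $\HKer(d^{\SW}_1) \quot \Im(d^{\SW}_2)$. Since $d^{\SW}_1 = \eps$ by Lemma~\ref{d2-cocomm}, we have $\HKer(d^{\SW}_1) = \HKer(\eps) = F(H)$, so $\H^{\SW}_1(H) = F(H) \quot \Im(d^{\SW}_2)$. On the other side, the first part of the proposition gives $\H^{\ell}_1(H) = F(H^{[1]}) \quot \Im(d_2) = F(H) \quot \Im(d^{\SW}_2)$ using $d_2 = d^{\SW}_2$. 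The two quotients therefore coincide, establishing $\H^{\ell}_1(H) = \H^{\SW}_1(H)$.

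I expect the main obstacle to be the careful bookkeeping in the first isomorphism: one must confirm that passing to the Hopf quotient by $\Im(d_2)$ imposes \emph{exactly} the defining relation of $\H^{\ell}_1(H)$ and nothing more, which requires checking that the ideal generated by $\Im(d_2)^+$ in $F(H^{[1]})$ is precisely the ideal generated by the elements $t(\underline{xy}) - t(\underline{x})\,t(\underline{y})$. The cleanest way to settle this is probably to exhibit mutually inverse Hopf algebra morphisms between the two quotients directly, rather than comparing generating sets of ideals, using the universal property of $F$ from Proposition~\ref{F-univprop} on each side.
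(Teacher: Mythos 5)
Your proposal is correct and follows essentially the same route as the paper: both compare the ideal generated by $\Im(d_2)^+$ with the ideal defining $\H^{\ell}_1(H)$ via the identity obtained by multiplying $d_2\bigl(t(\widetilde{x_1\otimes y_1})\bigr)-\eps$ by $t(\underline{x_2y_2})$ and using~\eqref{ttbar}, then deduce $F(H^{[1]})=\HKer(\eps)$ from Section~\ref{HKer}, and finally identify $d_2$ with $d^{\SW}_2$ via \eqref{d2c} and~\eqref{d2a} in the cocommutative case (your extra bookkeeping with $\HKer(d^{\SW}_1)=F(H)$ just makes explicit what the paper leaves implicit). One cosmetic correction: the collapse of $t(\underline{x_1})\,t(\underline{y_1})\,t^{-1}(\underline{x_2})\,t^{-1}(\underline{y_2})$ to $\eps(x)\eps(y)$ uses the \emph{commutativity} of the algebra $F(H^{[1]})$ (together with Lemma~\ref{tbar-mult} to split $t^{-1}(\underline{x_2y_2})$ in the quotient), not cocommutativity.
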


\begin{proof}
In order to prove the first isomorphism,
we check that the ideal generated by~$\Im(d_2)^+$
coincides with the ideal~$I$ of~$F(H^{[1]})$
generated by
$$t(\underline{xy}) - t(\underline{x}) \, t(\underline{y}) \, ,$$
where $x, y\in H$.
By an easy computation one shows that for $x,y \in H$,
\begin{equation}\label{d2inI}
d_2\bigl(t(\widetilde{x \otimes y})\bigr) 
- \varepsilon\bigl(t(\widetilde{x \otimes y})\bigr) 
= - \bigl( t(\underline{x_1y_1}) - t(\underline{x_1}) \, t(\underline{y_1})\bigr) 
\, {t}^{-1}(\underline{x_2y_2}) \, . 
\end{equation}
It follows that 
$d_2(\omega) - \varepsilon(\omega) \in I$
for all $\omega \in F(H^{[2]})$.
Hence, $\Im(d_2)^+ \subset I$.

By~\eqref{ttbar}, Equation~\eqref{d2inI} implies that
$$\bigl[ d_2\bigl(t(\widetilde{x_1 \otimes y_1})\bigr) 
- \varepsilon\bigl(t(\widetilde{x_1 \otimes y_1})\bigr) \bigr]
\, t(\underline{x_2y_2}) 
= - \bigl( t(\underline{xy}) - t(\underline{x}) \, t(\underline{y})\bigr) \, .
$$
From this we deduce the converse inclusion $I \subset \Im(d_2)^+$.

The equality $F(H^{[1]}) / \Im(d_2)  = {\HKer}(\varepsilon)/ \Im(d_2)$
follows from the equality 
$$F(H^{[1]}) = \HKer(\varepsilon: F(H^{[1]}) \to k)\, ,$$
a general fact observed in Section~\ref{HKer}.

If $H$ is cocommutative, then $F(H^{[1]}) = F(H)$,
$F(H^{[2]}) = F(H\otimes H)$, and 
by~\eqref{d2c} and~\eqref{d2a}, 
the map~$d_2$ coincides with the differential~$d^{\SW}_2$ of Lemma~\ref{d2-cocomm}.
This yields the second part of the statement.
\end{proof}

When $H=k[G]$ is a group algebra, Propositions~\ref{HSkG}
and~\ref{HS1=HL1} imply that
\begin{equation}\label{HS1kG}
\H^{\ell}_1(k[G]) = \H^{\SW}_1(k[G]) \cong k[H_1(G,\ZZ)] = k[G_{\ab}] \, ,
\end{equation}
where $G_{\ab}$ is the largest abelian quotient of~$G$.

\subsection{An $\Ext^1$-group}\label{ext}

By Lemma~\ref{d2}, the image $\Im(d_2)$
of the Hopf algebra morphism
$d_2 : F(H^{[2]}) \to F(H^{[1]})$
is a cocommutative Hopf subalgebra of~$F(H^{[1]})$.
Consider the sequence of Hopf algebra morphisms
$$k \longrightarrow \Im(d_2)
\overset{\iota}{\longrightarrow} F(H^{[1]}) 
\overset{\pi}{\longrightarrow} \H^{\ell}_1(H)
\longrightarrow k\, ,$$
where $\iota$ is the embedding of~$\Im(d_2)$ into~$F(H^{[1]})$
and $\pi$ is the natural projection defining~$\H^{\ell}_1(H)$.
This sequence is exact since $\iota$ is injective, $\pi$
is surjective, and $\Ker (\pi) = \Im(d_2)^+ F(H^{[1]})$ by Proposition~\ref{HS1=HL1}.
It follows from Proposition~\ref{Alg-exact} that
for any commutative algebra~$R$, the sequence of groups 
\begin{equation*}
0 \longrightarrow \Alg(\H^{\ell}_1(H),R) \overset{\pi^*}{\longrightarrow}  
\Alg(F(H^{[1]}),R) \overset{\iota^*}{\longrightarrow}  \Alg(\Im(d_2),R)
\end{equation*}
is exact. 
Observe that the groups in the previous sequence are all abelian
since the Hopf algebras involved are cocommutative.

Mimicking the definition of $\Ext^1$ in standard homological algebra,
we pose the following.

\begin{definition}\label{def-ext}
The group $\Ext^1(H,R)$ is the
cokernel of the map~$\iota^*$:
$$\Ext^1(H,R) = 
\Coker \bigl (\iota^* : \Alg(F(H^{[1]}),R) \longrightarrow \Alg(\Im(d_2),R) \bigr) \, .$$
\end{definition}

The group $\Ext^1(H,R)$ is \emph{abelian};
it is clearly contravariant in~$H$
and covariant in~$R$. 
The following is an immediate consequence
of the definition and of Proposition~\ref{Alg-surj}.

\begin{proposition}\label{Ext-zero}
If $k$ is algebraically closed, then $\Ext^1(H,k) = 0$.
\end{proposition}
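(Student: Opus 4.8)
The plan is to deduce Proposition~\ref{Ext-zero} directly from the definition of $\Ext^1(H,k)$ as the cokernel of $\iota^*$ together with the surjectivity statement of Proposition~\ref{Alg-surj}. By Definition~\ref{def-ext}, the group $\Ext^1(H,k)$ is the cokernel of the homomorphism $\iota^* : \Alg(F(H^{[1]}),k) \to \Alg(\Im(d_2),k)$, so it suffices to show that this $\iota^*$ is surjective when $k$ is algebraically closed. Surjectivity of $\iota^*$ is precisely what yields $\Coker(\iota^*) = 0$, hence $\Ext^1(H,k) = 0$.

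To obtain the surjectivity, first I would recall the exact sequence of commutative cocommutative Hopf algebras
$$k \longrightarrow \Im(d_2) \overset{\iota}{\longrightarrow} F(H^{[1]}) \overset{\pi}{\longrightarrow} \H^{\ell}_1(H) \longrightarrow k$$
that was established just before Definition~\ref{def-ext}, using Proposition~\ref{HS1=HL1} to identify the kernel of $\pi$ with $\Im(d_2)^+ F(H^{[1]})$. The three Hopf algebras appearing here are all commutative, since $F(H^{[1]})$ is commutative by construction and both $\Im(d_2)$ and $\H^{\ell}_1(H)$ are quotients or subalgebras inheriting this commutativity. Therefore this is an exact sequence of \emph{commutative} Hopf algebras, which is exactly the setting required to apply Proposition~\ref{Alg-surj}.

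Next I would invoke Proposition~\ref{Alg-surj} with this exact sequence and with $R = k$: since $k$ is algebraically closed by hypothesis, the proposition asserts that the induced sequence
$$1 \longrightarrow \Alg(\H^{\ell}_1(H),k) \overset{\pi^*}{\longrightarrow} \Alg(F(H^{[1]}),k) \overset{\iota^*}{\longrightarrow} \Alg(\Im(d_2),k) \longrightarrow 1$$
is exact. In particular $\iota^*$ is surjective. Consequently its cokernel vanishes, and by Definition~\ref{def-ext} we conclude $\Ext^1(H,k) = \Coker(\iota^*) = 0$, as claimed.

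The argument is essentially a bookkeeping exercise: the entire substance has been front-loaded into Proposition~\ref{Alg-surj}, whose proof uses faithful flatness of $H$ over the Hopf subalgebra~$A$ (here $\Im(d_2)$) together with the Nullstellensatz-type extension of characters guaranteed by algebraic closure. The only genuine point to verify is that the hypotheses of Proposition~\ref{Alg-surj} are met, namely that we indeed have an exact sequence of \emph{commutative} Hopf algebras over an algebraically closed field; this is immediate from the remarks preceding the definition. Thus there is no serious obstacle here — the proof is a direct corollary, which is precisely why the statement is flagged as ``an immediate consequence'' in the text.
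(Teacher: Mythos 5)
Your proof is correct and is exactly the argument the paper intends: the paper states Proposition~\ref{Ext-zero} as an immediate consequence of Definition~\ref{def-ext} and Proposition~\ref{Alg-surj}, applied to the exact sequence $k \to \Im(d_2) \overset{\iota}{\to} F(H^{[1]}) \overset{\pi}{\to} \H^{\ell}_1(H) \to k$ of commutative Hopf algebras established just before the definition. You have merely spelled out the same chain of references, including the verification (commutativity of all three Hopf algebras, exactness via Proposition~\ref{HS1=HL1}) that the paper leaves implicit.
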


\begin{rem}
The corresponding $\Ext^1$-groups in the case of a group~$G$
can be computed from the first homology group~$H_1(G,\ZZ)$, 
hence by~\eqref{HS1kG} from~$\H^{\ell}_1(H)$ for $H = k[G]$.
We may wonder whether for an arbitrary Hopf algebra~$H$ 
the group~$\Ext^1(H,R)$ depends only 
on the lazy cohomology Hopf algebra~$\H^{\ell}_1(H)$.
Observe that if $\H^{\ell}_1(H) \cong k$ is trivial, then
$\iota: \Im(d_2) \to F(H^{[1]})$ is an isomorphism and $\Ext^1(H,R)=0$.
\end{rem}

\section{The Cosemisimple Case}\label{cosemisimple}

In this section, assuming that 
the ground field~$k$ is algebraically closed of characteristic zero,
we compute the first lazy homology of a cosemisimple
Hopf algebra~$H$. 
More precisely, we show that $\H_1^\ell(H)$
is isomorphic to the group algebra of the universal abelian grading group 
of the tensor category of $H$-comodules.

As an application, we recover M\"uger's description of the center
of a compact group~\cite{mu} as the group of unitary characters
of the universal abelian grading group of the representation category.
This shows that one can reconstruct the center of such a group from its representation category
(or better, from its fusion semiring). 

We first fix some notation.
Let $H$ be a cosemisimple Hopf algebra and
$\Irr(H)$ be the set of isomorphism classes of simple (irreducible) $H$-comodules.
The isomorphism class of a simple $H$-comodule $V$ is denoted by~$[V]$.
For each $\lambda \in \Irr(H)$, we fix a simple $H$-comodule $V_\lambda$
such that $[V_\lambda]= \lambda$. 

For $\lambda, \mu, \nu \in \Irr(H)$, we write $\nu \prec \lambda \otimes \mu$ when
$V_\nu$ is isomorphic to a subcomodule of $V_\lambda \otimes V_\mu$.

\begin{definition}
The universal abelian grading group~$\Gamma_H$
of the semisimple tensor category of $H$-comodules
is the quotient of the free (multiplicative) abelian group
generated by $\Irr(H)$ modulo the relations
$$\lambda \mu = \nu \ \text{whenever} \ \nu \prec\lambda \otimes \mu \, .$$
\end{definition}

The class of $\lambda \in \Irr(H)$ in~$\Gamma_H$ is denoted by~$|\lambda|$.
The universal abelian grading group of a semisimple tensor category has appeared
(in various degrees of generality) in several independant papers; 
see \cite{bl, mu, gn, pe}. 

We have the following result.

\begin{theorem}\label{H1cos}
Let $H$ be a cosemisimple Hopf algebra over  an algebraically closed field
of characteristic zero. Then there is a Hopf algebra isomorphism
$$\H_1^{\ell}(H) \cong k[\Gamma_H] \, .$$
\end{theorem}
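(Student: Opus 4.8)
The plan is to use the explicit description of $\H_1^{\ell}(H)$ provided by Proposition~\ref{HL1-prop}, namely $\H^{\ell}_1(H) \cong (H^{\langle 1 \rangle})_\ab$, and to identify the right-hand side with $k[\Gamma_H]$ when $H$ is cosemisimple over an algebraically closed field of characteristic zero. Since $H$ is cosemisimple, the Peter--Weyl-type decomposition gives $H \cong \bigoplus_{\lambda \in \Irr(H)} C_\lambda$, where $C_\lambda$ is the coalgebra of matrix coefficients of the simple comodule $V_\lambda$. I would first understand how the two successive quotients---first modding out the ideal~$I$ generated by the elements $\varphi(x_1)\,x_2 - \varphi(x_2)\,x_1$ to obtain $H^{\langle 1\rangle}$, then abelianizing---act on this decomposition.

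The key computation is to show that passing to $(H^{\langle 1\rangle})_\ab$ collapses each matrix coalgebra $C_\lambda$ onto a single grouplike element, and that the multiplicative relations among these grouplikes are exactly those defining~$\Gamma_H$. Concretely, I expect that in $(H^{\langle 1\rangle})_\ab$ the commutativity relations force each $C_\lambda$ to become one-dimensional, spanned by a grouplike~$g_\lambda$ (the image of the normalized character or trace of~$V_\lambda$); the ``strong cocommutativity'' identity~\eqref{strongcomm} available in $H^{[1]}$, together with abelianization, should be what accomplishes this collapse. The decomposition $V_\lambda \otimes V_\mu \cong \bigoplus_\nu V_\nu^{\oplus m_{\lambda\mu}^\nu}$ at the comodule level translates, after projecting the matrix coefficients, into the relation $g_\lambda g_\mu = g_\nu$ whenever $\nu \prec \lambda \otimes \mu$; this is precisely the defining relation of~$\Gamma_H$. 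Thus I would construct a Hopf algebra morphism $k[\Gamma_H] \to (H^{\langle 1\rangle})_\ab$ sending $|\lambda| \mapsto g_\lambda$ and check it is well-defined using the fusion relations.

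For the inverse, I would build a map $H \to k[\Gamma_H]$ by sending each matrix coefficient in~$C_\lambda$ to $\eps$-normalized multiples of~$|\lambda|$; verifying this is an algebra and coalgebra morphism that kills the ideal~$I$ and all commutators, hence factors through $(H^{\langle 1\rangle})_\ab$, would yield the inverse Hopf algebra morphism. The main obstacle is the collapse step: proving rigorously that in $(H^{\langle 1\rangle})_\ab$ every simple coalgebra block becomes one-dimensional. This is where cosemisimplicity, the characteristic-zero hypothesis, and algebraic closedness genuinely enter---one needs that the representation-theoretic semisimplicity lets each $C_\lambda$ be understood as a full matrix coalgebra $M_{n_\lambda}(k)^*$, and that the lazy and commutator relations identify all the primitive grouplikes within one block. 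I would handle this by analyzing the action of the relations on a basis of matrix coefficients $(c^\lambda_{ij})$, showing that commutativity plus~\eqref{idealH1} force $c^\lambda_{ij}$ to be sent to~$\eps(c^\lambda_{ij})$ times a single grouplike; the divisibility and separation of points afforded by $\Alg(-,k)$ over an algebraically closed field (as used in Proposition~\ref{Alg-surj}) will be the technical lever. Once the block collapse is established, matching the multiplicative structure to the fusion rules is essentially the definition of~$\Gamma_H$, so both composites are readily seen to be mutually inverse Hopf algebra isomorphisms.
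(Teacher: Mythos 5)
Your overall strategy is the paper's (collapse each comatrix block to a grouplike, match the fusion rules to the relations of~$\Gamma_H$, construct maps both ways), but there is a genuine gap at the decisive step: deducing $g_\lambda g_\mu = g_\nu$ for each individual $\nu \prec \lambda \otimes \mu$. Projecting a product of matrix coefficients only yields
$$g_\lambda g_\mu \, \delta_{ij}\delta_{kl} = \sum_{\nu \prec \lambda \otimes \mu} \eps(z_\nu)\, g_\nu \, , \qquad \text{where } x^\lambda_{ij}x^\mu_{kl} = \sum_{\nu} z_\nu \ \text{ with } z_\nu \in H_\nu \, ,$$
and the scalars $\eps(z_\nu)$ are not controlled: they may vanish or cancel, so linear independence of grouplikes tells you nothing about any single~$\nu$. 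This is exactly where the paper does real work: it uses the characters $\chi^\lambda = \sum_i x^\lambda_{ii}$, whose images in $H^{[1]}$ are $d_\lambda x^\lambda$ (with $d_\lambda$ invertible by the characteristic-zero hypothesis), obtaining
$$t(x^\lambda)^\dagger \, t(x^\mu)^\dagger = \sum_{\nu \prec \lambda \otimes \mu} \frac{d^\nu_{\lambda\mu} d_\nu}{d_\lambda d_\mu}\, t(x^\nu)^\dagger$$
with \emph{positive rational} coefficients, and then invokes Lemma~\ref{elem}: in a group algebra, a grouplike equal to a positive rational combination of grouplikes forces all those grouplikes to coincide. Without this no-cancellation argument your map $k[\Gamma_H] \to (H^{\langle 1 \rangle})_\ab$, $|\lambda| \mapsto g_\lambda$, is not known to be well defined; ``check it is well-defined using the fusion relations'' is precisely the point at issue, not a routine verification, and it is also the place where characteristic zero genuinely enters.

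Two further remarks. First, you misplace the role of the hypotheses in the collapse step: no separation of points, divisibility, or abelianization is needed there. The collapse happens already in $H^{[1]}$: applying the lazy relations~\eqref{idealH1} with the dual-basis functionals $\psi^\lambda_{ij}$ gives $\underline{x^\lambda_{ij}} = 0$ for $i \neq j$ and $\underline{x^\lambda_{ii}} = \underline{x^\lambda_{jj}}$ (the paper's Lemma~\ref{xlambda}); algebraic closedness is used there only to know each block $H_\lambda$ is a full comatrix coalgebra. Second, your direct construction of the inverse, $x^\lambda_{ij} \mapsto \eps(x^\lambda_{ij})\,|\lambda|$, is sound and is a genuine (mild) variant of the paper: the paper instead dualizes, producing for each $\alpha \in \widehat{\Gamma_H}$ the lazy character $\alpha_0 = \sum_{\lambda} \alpha(|\lambda|)\, \varepsilon_{|H_\lambda}$ and using that characters of a discrete abelian group separate points (a second use of algebraic closedness); your universal version of the same computation, resting on $H_\lambda H_\mu \subset \sum_{\nu \prec \lambda \otimes \mu} H_\nu$, would avoid that duality step. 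But it cannot substitute for the missing positivity argument: injectivity of the induced map $(H^{\langle 1 \rangle})_\ab \to k[\Gamma_H]$ is equivalent to the well-definedness of the forward map, so Lemma~\ref{elem} (or an equivalent) is unavoidable.
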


The rest of the subsection is essentially devoted to the proof of Theorem
\ref{H1cos}, with an application to centers of compact groups at the very end.

We need some additional notation.
For each $\lambda \in \Irr(H)$, we fix a basis
$e_1^{\lambda}, \ldots, e_{d_\lambda}^{\lambda}$ of $V_\lambda$ (so that $d_\lambda = \dim(V_\lambda)$)
and elements $x^\lambda_{ij}$ of~$H$ ($1\leq i,j\leq d_\lambda$) such that
$\alpha(e_j^{\lambda})= \sum_{i=1}^{d_\lambda} \, e_i^{\lambda} \otimes x_{ij}^\lambda$, 
where $\alpha$ stands for the $H$-coaction on $V_\lambda$. 
The elements $x_{ij}^\lambda \in H$ ($1 \leq i,j \leq d_\lambda$, $\lambda \in \Irr(H)$) 
are linearly independent and satisfy
\begin{equation}\label{Deltaxij}
\Delta(x_{ij}^\lambda) 
=  \sum_{k=1}^{d_\lambda} \, x_{ik}^\lambda \otimes x_{kj}^\lambda \, , 
\qquad \varepsilon(x_{ij}^\lambda)=\delta_{ij} \, ,
\end{equation}
and we have the coalgebra direct sum (Peter-Weyl decomposition)
$$H = \bigoplus_{\lambda \in \Irr(H)} H_\lambda$$
where $H_\lambda$ is the comatrix subcoalgebra spanned by the 
elements $x_{ij}^\lambda$ ($1 \leq i,j \leq d_\lambda$).  

We first describe the coalgebra $H^{[1]}$.

\begin{lemma}\label{xlambda}
The coalgebra $H^{[1]}$ 
has a basis consisting of the grouplike elements 
$x^\lambda = \underline{x_{11}^\lambda}$ ($\lambda \in \Irr(H)$). 
We have $\underline{x_{ij}^\lambda} = 0$ for all $i \neq j$ and
$\underline{x_{ii}^\lambda} = x^\lambda$ for all $i = 1, \ldots, d_\lambda$.
\end{lemma}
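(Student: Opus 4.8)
The plan is to exploit the Peter--Weyl decomposition $H = \bigoplus_{\lambda \in \Irr(H)} H_\lambda$ into comatrix subcoalgebras and to show that each block $H_\lambda$ collapses to a one-dimensional subspace of $H^{[1]}$. Write $W \subseteq H$ for the subspace spanned by the elements $\varphi(x_1)\,x_2 - \varphi(x_2)\,x_1$ defining $H^{[1]} = H/W$. First I would note that these relations respect the coalgebra grading: since each $H_\lambda$ is a subcoalgebra, for $x \in H_\lambda$ the element $\varphi(x_1)\,x_2 - \varphi(x_2)\,x_1$ again lies in $H_\lambda$, and only the restriction of $\varphi$ to $H_\lambda$ enters. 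Decomposing an arbitrary $x \in H$ into its homogeneous components therefore yields $W = \bigoplus_\lambda W_\lambda$ with $W_\lambda = W \cap H_\lambda$, so that $H^{[1]} = \bigoplus_\lambda \bigl( H_\lambda / W_\lambda \bigr)$ and it suffices to analyze one block at a time.

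The key computation takes place inside a fixed block. Since the matrix coefficients $x_{ij}^\lambda$ form a basis of $H$, there are dual functionals $\psi_{pq}^\lambda \in H^*$ characterized by $\psi_{pq}^\lambda(x_{ij}^\mu) = \delta^{\lambda\mu}\,\delta_{pi}\,\delta_{qj}$. Substituting $x = x_{ij}^\lambda$ and $\varphi = \psi_{iq}^\lambda$ into the defining relation and expanding by $\Delta(x_{ij}^\lambda) = \sum_k x_{ik}^\lambda \otimes x_{kj}^\lambda$ from~\eqref{Deltaxij}, I would obtain in $H^{[1]}$ the identity $\underline{x_{qj}^\lambda} = \delta_{qj}\,\underline{x_{ii}^\lambda}$. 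Read for $q \neq j$ this gives $\underline{x_{ij}^\lambda} = 0$ whenever the two indices differ, and read for $q = j$ it gives $\underline{x_{ii}^\lambda} = \underline{x_{jj}^\lambda}$ for all $i,j$. Consequently $H_\lambda / W_\lambda$ is spanned by the single class $x^\lambda = \underline{x_{11}^\lambda}$ and is at most one-dimensional.

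It remains to see that $x^\lambda$ is a nonzero grouplike element. The counit descends to $H^{[1]}$ (Proposition~\ref{C1coalgebra}) and satisfies $\varepsilon(x^\lambda) = \varepsilon(x_{11}^\lambda) = 1$, so each $x^\lambda$ is nonzero; lying in distinct summands of the direct sum above, the family $(x^\lambda)_{\lambda \in \Irr(H)}$ is then linearly independent and hence a basis of $H^{[1]}$. Finally, the coproduct formula $\Delta(\underline{x}) = \underline{x_1} \otimes \underline{x_2}$ gives $\Delta(x^\lambda) = \sum_k \underline{x_{1k}^\lambda} \otimes \underline{x_{k1}^\lambda}$, which collapses by the vanishing of off-diagonal classes to $x^\lambda \otimes x^\lambda$; together with $\varepsilon(x^\lambda) = 1$ this shows $x^\lambda$ is grouplike.

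I expect the only genuinely delicate point to be the first step, namely verifying that the laziness relations split along the Peter--Weyl decomposition so that the quotient can be controlled block by block; once that is in place, the single-block computation and the counit argument are routine. The cosemisimplicity of $H$ (over an algebraically closed field) is what makes the matrix coefficients available as a basis and supplies the dual functionals $\psi_{pq}^\lambda$ used in the key step.
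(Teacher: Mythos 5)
Your proof is correct and follows essentially the same route as the paper: the same substitution of dual-basis functionals into the laziness relations, expanded via~\eqref{Deltaxij}, yields the vanishing of the off-diagonal classes and the identification of all diagonal ones, and grouplikeness then follows just as in the paper. The only (minor) divergence is the linear-independence step, where the paper applies the functionals $\psi^\lambda$ defined by $\psi^\lambda(x_{ij}^\mu)=\delta_{\lambda\mu}\delta_{ij}$, which descend to $H^{[1]}$, whereas you first check that the relation subspace splits along the Peter--Weyl decomposition and then use the counit to see each $x^\lambda$ is nonzero --- both arguments are valid and of comparable length.
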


It follows from this lemma and from~\eqref{Deltaxij} that 
each element $x^\lambda \in H^{[1]}$ is grouplike.
Thus, $H^{[1]}$ is a \emph{cosemisimple pointed} coalgebra.

\begin{proof} 
Let $(\psi_{ij}^\lambda)_{ij}$ be the dual basis of the 
basis~$(x_{ij}^\lambda)_{ij}$. 
The following relations hold  in $H^{[1]}$ for any $\varphi \in H^*$:
$$\sum_k \underline{x_{ik}^\lambda} \, \varphi(x_{kj}^\lambda)
= \sum_{k} \underline{x_{kj}^\lambda} \, \varphi(x_{ik}^\lambda)$$ 
For $\varphi=\psi_{ij}^\lambda$, this gives $\underline{x_{ii}^\lambda}=\underline{x_{jj}^\lambda}$.
For $\varphi = \psi_{ii}^\lambda$, this gives $\underline{x_{ij}^\lambda}=0$
if $i \neq j$.
Hence the elements $x^\lambda$, as defined in the statement of the lemma, 
span~$H^{[1]}$. One easily checks that they are linearly independent, using 
the linear forms $\psi^\lambda \in H^*$ defined by
$\psi^\lambda(x_{ij}^\mu) = \delta_{\lambda\mu}\delta_{ij}$.
\end{proof}

It follows that the Hopf algebra $F(H^{[1]})$ is the group algebra of the free abelian 
group generated by~$\Irr(H)$. 

Before giving the proof of Theorem~\ref{H1cos}, we state the following
lemma, whose elementary proof is  left to the reader.

\begin{lemma}\label{elem}
Let $g, h_1, \ldots h_n$ be elements of a group~$G$. Assume
that in the group algebra $k[G]$ we have
$$g= \sum_{i=1}^n \, r_i \, h_i $$
for some positive rational numbers $r_1, \ldots, r_n$.
Then $h_1 = \cdots = h_n=g$. 
\end{lemma}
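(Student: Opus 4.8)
The plan is to exploit the fact that the elements of~$G$ form a $k$-basis of the group algebra~$k[G]$, so that an equality in~$k[G]$ amounts to termwise equality of coefficients, together with the observation that in characteristic zero a sum of positive rationals cannot vanish. Recall that in this section the ground field~$k$ is algebraically closed of characteristic zero, so that the positive rationals $r_1, \ldots, r_n$ are genuine nonzero elements of~$k$; this hypothesis is precisely what makes the lemma true and is the only nontrivial ingredient.

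First I would collect the terms on the right-hand side according to the distinct values taken by $h_1, \ldots, h_n$. Writing $S = \{h_1, \ldots, h_n\}$ for the set of these distinct group elements, the right-hand side becomes $\sum_{h \in S} c_h \, h$, where $c_h = \sum_{i \, : \, h_i = h} r_i$ is a sum of one or more positive rationals. Since $k$ has characteristic zero, each such $c_h$ is a strictly positive rational and hence a \emph{nonzero} element of~$k$; in particular no cancellation can occur, and the support of the right-hand side is exactly~$S$.

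Next I would compare coefficients with the left-hand side~$g$, whose expansion on the basis~$G$ has coefficient~$1$ at~$g$ and~$0$ elsewhere, so that its support is~$\{g\}$. Equating the two expressions and using the linear independence of the elements of~$G$ in~$k[G]$, the two supports must agree. Since every $c_h$ with $h \in S$ is nonzero, each element of~$S$ lies in the support of the left-hand side, forcing $S \subseteq \{g\}$, whence $S = \{g\}$; that is, $h_1 = \cdots = h_n = g$, as desired. The one substantive point---indeed the entire content of the lemma---is that positivity of the coefficients rules out cancellation, and this is exactly where the characteristic-zero assumption of the section enters; everything else is the routine identification of $k[G]$ with the space of finitely supported functions on~$G$.
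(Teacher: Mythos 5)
Your proof is correct. The paper itself gives no argument for this lemma (it is stated with ``whose elementary proof is left to the reader''), and your write-up is exactly the intended elementary argument: collect coefficients over the distinct elements of $\{h_1,\ldots,h_n\}$, note that each aggregated coefficient is a positive rational and hence nonzero since $k$ has characteristic zero (the standing hypothesis of Section~6), and compare supports using the linear independence of group elements in~$k[G]$, forcing $\{h_1,\ldots,h_n\}=\{g\}$.
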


\begin{proof}[Proof of Theorem~\ref{H1cos}]
The character of an element $\lambda \in \Irr(H)$ is the element
$\chi^\lambda = \sum_{i=1}^{d_\lambda}x_{ii}^\lambda \in H$.
For all $\lambda, \mu \in \Irr(H)$, we have
$$\chi^\lambda \chi^\mu = \sum_{\nu\prec \lambda \otimes \mu}
d_{\lambda\mu}^\nu \, \chi^\nu \, ,$$
where $d_{\lambda\mu}^\nu = \dim \Hom(V_\nu, V_\lambda \otimes V_\mu)$.
By Lemma~\ref{xlambda}, 
$\underline \chi^\lambda = d_\lambda x^\lambda$. This leads
to the following computation in $\H_1^\ell(H)$:
\begin{align*}
t(x^\lambda)^\dagger  \, t(x^\mu)^\dagger 
&= \frac{1}{d_\lambda d_\mu} \, t(\underline{\chi^\lambda})^\dagger \, 
t(\underline{\chi^\mu})^\dagger
=\frac{1}{d_\lambda d_\mu} \, t(\underline{\chi^\lambda\chi^\mu})^\dagger \\
& = \frac{1}{d_\lambda d_\mu} \, \sum_{\nu\prec \lambda \otimes \mu} \, 
d_{\lambda\mu}^\nu \, t(\underline{\chi^\nu})^\dagger 
=  \sum_{\nu\prec \lambda \otimes \mu} \, \frac{d_{\lambda\mu}^\nu}{d_\lambda d_\mu} \, 
t(\underline{\chi^\nu})^\dagger \\
& = \sum_{\nu\prec \lambda \otimes \mu} \, 
\frac{d_{\lambda\mu}^\nu d_\nu}{d_\lambda d_\mu} \, t(x^\nu)^\dagger \, .
\end{align*}
The Hopf algebra $\H_1^\ell(H)$ is a group algebra since it is a quotient of the group
algebra $F(H^{[1]})$ and the elements $t(x^\lambda)^\dagger$ are grouplike elements.
Hence by Lemma~\ref{elem}, if $\nu \prec \lambda \otimes \mu$, then
$$t(x^\lambda)^\dagger \, t(x^\mu)^\dagger = t(x^\nu)^\dagger \, .$$
This shows that there is a Hopf algebra morphism
$k[\Gamma_H] \rightarrow \H_1^\ell(H)$ given by
$$|\lambda| \mapsto t(x^\lambda)^\dagger \, .$$
This Hopf algebra morphism is clearly surjective, and since it is a Hopf algebra morphism
between two group algebras, it is enough to check its injectivity on the grouplike
elements. 
Since the characters of a discrete abelian group separate its points, this is equivalent
to show that the induced injective group homomorphism
\begin{equation}\label{iso-coss}
\H_{\ell}^1(H,k) \cong \Alg(\H^\ell_1(H),k) \longrightarrow  
\widehat{\Gamma_H} = \Hom(\Gamma_H,k^{\times})
\end{equation}
sending $\varphi \in \Alg(\H^\ell_1(H),k)$ to the element
$\varphi' \in \widehat{\Gamma_H}$ defined by
$$\varphi'(|\lambda|) = \varphi(t(x^\lambda)^\dagger)$$ 
($\lambda \in \Irr(H)$), is surjective. 
For $\alpha \in  \widehat{\Gamma_H}$, consider the linear 
form $\alpha_0$ on $H = \bigoplus_{\lambda \in \Irr(H)} \, H_\lambda$ 
defined by  
$$\alpha_0 = \sum_{\lambda \in \Irr(H)} \, \alpha(|\lambda|) \, \varepsilon_{|H_\lambda}\, .$$
It is clear that $\alpha_0$ is an element of ${\Reg}^1_{\ell}(H)$. Let us check that
$\alpha_0$ is an algebra morphism. 
Let $\lambda, \mu \in \Irr(H)$, $1\leq i,j \leq d_\lambda$, $1 \leq k,l \leq d_\mu$. We have 
$x_{ij}^\lambda \, x_{kl}^\mu = \sum_{\nu\prec \lambda \otimes \mu} z_\nu$ for some elements
$z_\nu \in H_\nu$, since by Tannakian reconstruction we have 
$H_\lambda H_\mu \subset \sum_{\nu\prec \lambda \otimes \mu} \, H_\nu$. 
Therefore,
\begin{align*}
\alpha_0(x_{ij}^\lambda x_{kl}^\mu) 
& = \alpha_0\Bigl(\sum_{\nu\prec \lambda \otimes \mu}z_\nu \Bigr)
= \sum_{\nu\prec \lambda \otimes \mu} \alpha(|\nu|) \, \varepsilon(z_\nu) \\
& = \sum_{\nu\prec \lambda \otimes \mu} \alpha(|\lambda||\mu|) \, \varepsilon(z_\nu)
= \alpha(|\lambda|) \, \alpha(|\mu|) \, \varepsilon(x_{ij}^\lambda x_{kl}^\mu) \\
& = \alpha_0(x_{ij}^\lambda) \, \alpha_0(x_{kl}^\mu) \, .
\end{align*}
Hence, $\alpha_0 \in \H_{\ell}^1(H,k)$ with $\alpha_0'= \alpha$.  
This concludes the proof of Theorem~\ref{H1cos}.
\end{proof}

\begin{rems}
(a) We have just proved that the group homomorphism
$\H_{\ell}^1(H,k) \to \widehat{\Gamma_H}$ of~\eqref{iso-coss} is an isomorphism. 
Since $\H_{\ell}^1(H,k)$ is also isomorphic to the group of monoidal automorphisms
of the identity functor of the category of $H$-comodules, it is possible
to prove Theorem~\ref{H1cos} by using~\cite[Prop.~3.9]{gn} or 
\cite[Prop.~1.3.3]{pe}.

(b) Theorem~\ref{H1cos} implies that for any cosemisimple Hopf algebra~$H$
over an algebraically closed field of characteristic zero, 
the first lazy homology Hopf algebra depends only on the fusion semiring 
of the tensor category of $H$-comodules.

(c) Let $H = k^G$ be the Hopf algebra of functions on a finite group~$G$.
By Proposition~\ref{HL1k^G} and Theorem~\ref{H1cos},
$$\H_1^{\ell}(H) \cong k^{Z(G)}\cong k[\Gamma_H] \, .$$
Consequently, $\Gamma_H \cong \widehat{Z(G)} = \Hom(Z(G), k^{\times})$,
a result already proved in~\cite{mu}.
\end{rems}

We end this section by showing how to recover M\"uger's description of the center of a compact group
(here the ground field is the field~$\mathbb C$ of complex numbers).

\begin{corollary}
The center $Z(G)$ of a compact group~$G$ is isomorphic
to the group of unitary characters $\widehat{\mathcal C(G)}= \Hom(\mathcal C(G), \mathbb S^1)$ 
of the chain group~$\mathcal C(G)$ of~$G$, as defined in~\cite{mu}.
\end{corollary}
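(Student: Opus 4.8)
The plan is to deduce the corollary from Theorem~\ref{H1cos} by transporting the statement about a compact group~$G$ into the Hopf-algebraic setting via its Hopf algebra of representative functions. The key dictionary is this: if $G$ is a compact group and $H = \mathcal R(G)$ denotes the commutative Hopf $\ast$-algebra of representative (matrix-coefficient) functions on~$G$, then $H$ is a cosemisimple Hopf algebra over~$\mathbb C$, its category of comodules is the category of finite-dimensional continuous unitary representations of~$G$, and $\Irr(H)$ is the set of isomorphism classes of irreducible such representations. First I would observe that M\"uger's chain group~$\mathcal C(G)$ is, by its very definition in~\cite{mu}, the universal abelian grading group of the tensor category $\mathrm{Rep}(G)$, which under the dictionary above is exactly~$\Gamma_H$ for $H = \mathcal R(G)$; thus $\mathcal C(G) \cong \Gamma_H$.

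With that identification in hand, the second step is to apply Theorem~\ref{H1cos} to $H = \mathcal R(G)$, giving a Hopf algebra isomorphism $\H_1^{\ell}(H) \cong \mathbb C[\Gamma_H] \cong \mathbb C[\mathcal C(G)]$. Then I would take unitary characters: the group of unitary characters $\Hom(\mathcal C(G), \mathbb S^1)$ is the Pontryagin-type dual of the discrete abelian group~$\mathcal C(G)$. The heart of the argument is to match this dual with the center~$Z(G)$. The natural map to exhibit sends a central element $z \in Z(G)$ to the character of~$\mathcal C(G)$ that records, for each irreducible representation~$\lambda$, the scalar by which~$z$ acts (by Schur's lemma~$z$ acts on $V_\lambda$ as a scalar $\omega_\lambda(z) \in \mathbb S^1$); the grading relations $\lambda\mu = \nu$ whenever $\nu \prec \lambda \otimes \mu$ are respected precisely because $z$ acts as a scalar compatibly with tensor products and subobjects, so this assignment is a well-defined group homomorphism $Z(G) \to \widehat{\mathcal C(G)}$.

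**The main obstacle** will be proving that this homomorphism $Z(G) \to \widehat{\mathcal C(G)}$ is an isomorphism, rather than the soft identifications above. Injectivity follows from the Peter--Weyl theorem: if $z$ acts trivially on every irreducible, it acts trivially on every representation, hence $z = e$ since the representations of a compact group separate points. Surjectivity is the genuinely substantive point and is exactly where M\"uger's analysis is used: given a unitary character $\chi$ of~$\mathcal C(G)$, one must manufacture a central element of~$G$ acting on each~$V_\lambda$ by the scalar~$\chi(|\lambda|)$. The cleanest route is to package $\chi$ as a monoidal natural transformation of the identity functor of $\mathrm{Rep}(G)$ valued in scalars; by Tannaka--Krein duality for compact groups such a unitary monoidal automorphism of the identity functor is implemented by a unique element of~$G$, and the scalar (hence central) nature of the transformation forces this element into~$Z(G)$. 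I would therefore invoke Tannaka--Krein reconstruction together with the characterization of~$\mathcal C(G)$ from~\cite{mu} to close this step, noting that the algebraic skeleton of this argument is precisely what Remark~(a) and Theorem~\ref{H1cos} provide in the finite case, the compact case differing only in replacing finite abelian-group duality by Pontryagin duality for discrete/compact abelian groups.
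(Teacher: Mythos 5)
Your proposal is correct, and it reaches the statement by a route that overlaps with, but does not coincide with, the paper's. Both proofs pass through $H=\mathcal R(G)$, Tannaka--Krein, and the identification $\mathcal C(G)\cong\Gamma_{\mathcal R(G)}$; the difference lies in how the isomorphism itself is produced. The paper's proof is a short deduction from material already established: it identifies $G\cong\Hom_{*-\alg}(\mathcal R(G),\mathbb C)$, observes that
$Z(G)\cong\Hom_{*-\alg}(\mathcal R(G),\mathbb C)\cap\H_{\ell}^1(\mathcal R(G),\mathbb C)$
(laziness of a character amounts to commuting with all evaluation functionals, hence to centrality, since the evaluations span the dual of each matrix coalgebra~$H_\lambda$), and then invokes the isomorphism $\H_{\ell}^1(\mathcal R(G),\mathbb C)\cong\widehat{\Gamma_{\mathcal R(G)}}$ of~\eqref{iso-coss} --- whose hard half, surjectivity, was already proved inside Theorem~\ref{H1cos} via the explicit lazy character $\alpha_0=\sum_{\lambda}\alpha(|\lambda|)\,\varepsilon_{|H_\lambda}$ --- noting that $*$-morphisms correspond to unitary characters. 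You instead build the map $Z(G)\to\widehat{\mathcal C(G)}$ directly from Schur's lemma, get injectivity from Peter--Weyl, and get surjectivity by packaging a unitary character as a unitary monoidal automorphism of the identity functor of $\mathrm{Rep}(G)$ and reconstructing a group element by Tannaka--Krein; this re-derives, in categorical language, exactly the surjectivity content of~\eqref{iso-coss} rather than citing it, and is in substance M\"uger's original argument, which the paper's Remarks after Theorem~\ref{H1cos} (part~(a)) explicitly flag as an available alternative. Your route buys self-containedness and makes the role of centrality transparent (scalar action compatible with tensor products and subobjects); the paper's route buys brevity, exhibiting the corollary as a formal consequence of the lazy-(co)homology machinery. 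Two small remarks: your citation of Theorem~\ref{H1cos} itself ($\H_1^{\ell}(\mathcal R(G))\cong\mathbb C[\Gamma_{\mathcal R(G)}]$) ends up decorative, since your isomorphism never uses it; and in the surjectivity step you should state explicitly that centrality of the reconstructed element $z$ follows from $\pi(z)$ being scalar in every irreducible together with the separation of points by representations --- the same Peter--Weyl input you already use for injectivity, so nothing is missing, but the sentence ``the scalar nature forces this element into $Z(G)$'' deserves that one extra line.
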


\begin{proof}
Let $\mathcal R(G)$ be the Hopf algebra of (complex) representative functions on $G$;
it is cosemisimple.
The compact group $G$ is isomorphic to the compact group
$\Hom_{*-\alg}(\mathcal R(G),\mathbb C)$ by the Tannaka-Krein theorem, and 
$$Z(G) \cong \Hom_{*-\alg}(\mathcal R(G),\mathbb C) \cap \H_{\ell}^1(\mathcal R(G), \mathbb C) \, .$$
The chain group of~\cite{mu} is nothing but~$\Gamma_{\mathcal R(G)}$, and
the $*$-morphisms correspond to unitary characters under the isomorphism 
$\H_{\ell}^1(\mathcal R(G), \mathbb C) \cong \widehat{\Gamma_{\mathcal R(G)}}$
of~\eqref{iso-coss}.
This concludes the proof.
\end{proof}

\section{The Second Lazy Homology Hopf Algebra}\label{lazyhom2}

In this section we construct the second lazy homology Hopf algebra 
of a Hopf algebra~$H$. 

\subsection{The Hopf algebra $\H^{\ell}_2(H)$}\label{HL2}

It would be very helpful if we could find a version of 
the differential~$d^{\SW}_3$ of Lemma~\ref{d2-cocomm}
under the form of a  Hopf algebra morphism
$F(H \otimes H \otimes H) \to F(H^{[2]})$.
Unfortunately this may not be possible for an arbitrary Hopf algebra.
Instead we proceed as follows.

Mimicking~\eqref{d3a}, we set
\begin{equation}\label{d3}
d_3(x,y,z) = 
t(\widetilde{y_1\otimes z_1}) \, t(\widetilde{x_1 \otimes y_2 z_2}) \, 
{t}^{-1}(\widetilde{x_2y_3 \otimes z_3}) \, {t}^{-1}(\widetilde{x_3 \otimes y_4}) 
\in F(H^{[2]})
\end{equation}
for all $x$, $y$, $z\in H$.

\begin{lemma}\label{d(x,y,z)} 
For all $x,y, z\in H$, we have the following identities:
\begin{equation}\label{d31}
d_2 \bigl (d_3(x,y,z) \bigr) = \varepsilon(xyz)\, 1 \, ,
\end{equation}
\begin{equation}\label{d32}
\Delta\bigl(d_3(x,y,z)\bigr) =
t(\widetilde{y_1\otimes z_1}) \, t(\widetilde{x_1\otimes y_2z_2}) \,
{t}^{-1}(\widetilde{x_3y_4\otimes z_4}) \, {t}^{-1}(\widetilde{x_4\otimes y_5})
\otimes d_3(x_2,y_3,z_3) \, ,
\end{equation}
\begin{equation}\label{d33}
(\id\otimes d_2) \bigl(\Delta(d_3(x,y,z) \bigr) = d_3(x,y,z) \otimes 1 \, .
\end{equation}
Furthermore, $d_3(x,y,z) \in \HKer(d_2)$ for all $x,y,z \in H$.
\end{lemma}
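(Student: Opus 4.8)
The plan is to establish the coproduct formula \eqref{d32} and the vanishing \eqref{d31} by direct computation, and then to derive \eqref{d33} and the membership $d_3(x,y,z)\in\HKer(d_2)$ as formal consequences. The last assertion in particular is immediate once \eqref{d33} is in hand: by Corollary~\ref{HKerd2} we have $\HKer(d_2)=\{a\in F(H^{[2]})\mid a_1\otimes d_2(a_2)=a\otimes 1\}$, and \eqref{d33} is precisely this condition for $a=d_3(x,y,z)$. So the genuine work lies in \eqref{d32} and \eqref{d31}.

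For \eqref{d32} I would use that $\Delta$ is an algebra morphism of $F(H^{[2]})$, so that $\Delta(d_3(x,y,z))$ is the product, formed componentwise in $F(H^{[2]})\otimes F(H^{[2]})$, of the coproducts of the four generators occurring in \eqref{d3}. These coproducts are read off from \eqref{Hopf-on-Takeuchi}, namely $\Delta\bigl(t(\widetilde{a\otimes b})\bigr)=t(\widetilde{a_1\otimes b_1})\otimes t(\widetilde{a_2\otimes b_2})$ and $\Delta\bigl(t^{-1}(\widetilde{a\otimes b})\bigr)=t^{-1}(\widetilde{a_2\otimes b_2})\otimes t^{-1}(\widetilde{a_1\otimes b_1})$, the order being reversed in the second case. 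Expanding the product and relabelling the Sweedler indices, I expect the right-hand tensor factor to assemble into $d_3(x_2,y_3,z_3)$ and the left-hand factor into the displayed expression; the lazy-cocommutativity identities~\eqref{lazycomm} of $H^{[2]}$ are what will let me move the relevant copies of the Sweedler components into the slots prescribed by \eqref{d32}. This step is laborious but essentially mechanical.

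The main obstacle is \eqref{d31}. Since $d_2$ is an algebra morphism, $d_2(d_3(x,y,z))$ is the product in the commutative algebra $F(H^{[1]})$ of the images under $d_2$ of the four generators in \eqref{d3}. By \eqref{d2c} one has $d_2\bigl(t(\widetilde{a\otimes b})\bigr)=t(\underline{a_1})\,t(\underline{b_1})\,t^{-1}(\underline{a_2b_2})$, and since $d_2$ commutes with the antipode one gets $d_2\bigl(t^{-1}(\widetilde{a\otimes b})\bigr)=t^{-1}(\underline{a_1})\,t^{-1}(\underline{b_1})\,t(\underline{a_2b_2})$. Substituting these into $d_3(x,y,z)$ turns \eqref{d31} into a telescoping identity among the resulting $t$ and $t^{-1}$ factors: consecutive factors must be cancelled using the relations~\eqref{ttbar}, after the Sweedler indices have been aligned by means of the strong-cocommutativity~\eqref{strongcomm} of $H^{[1]}$. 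This is the lazy counterpart of the classical vanishing $d^{\SW}_2\circ d^{\SW}_3=\eta\varepsilon$ coming from the simplicial identities in the cocommutative case; here it must be checked directly, and I expect the index alignment needed before each application of \eqref{ttbar} to be the delicate point, exactly as in the proof of Lemma~\ref{d2}. Conceptually, the formula \eqref{d2c} exhibits $d_2\circ t$ as the coboundary \eqref{coboundary} of the map $a\mapsto t(\underline a)$, and \eqref{d31} reflects the fact that such a coboundary satisfies the $2$-cocycle condition \eqref{2cocycle}.

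Finally, \eqref{d33} will follow by applying $\id\otimes d_2$ to \eqref{d32}: the right-hand factor becomes $d_2\bigl(d_3(x_2,y_3,z_3)\bigr)=\varepsilon(x_2)\,\varepsilon(y_3)\,\varepsilon(z_3)\,1$ by \eqref{d31}, and absorbing these three counits into the left-hand factor and relabelling the surviving Sweedler components (the $x$-indices $x_1,x_3,x_4\mapsto x_1,x_2,x_3$, and similarly for $y$ and $z$) turns that factor into $d_3(x,y,z)$. This yields $d_3(x,y,z)\otimes 1$, which is \eqref{d33}, and the membership $d_3(x,y,z)\in\HKer(d_2)$ then follows from Corollary~\ref{HKerd2} as noted at the outset.
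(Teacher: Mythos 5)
Your proposal is correct and follows essentially the same route as the paper: direct expansion of $d_3(x,y,z)$ using that $d_2$ and $\Delta$ are (Hopf) algebra morphisms, with the strong-cocommutativity identities~\eqref{strongcomm} of $H^{[1]}$ driving the telescoping via~\eqref{ttbar} in~\eqref{d31}, the lazy-cocommutativity identities~\eqref{lazycomm} of $H^{[2]}$ handling~\eqref{d32}, and then \eqref{d33} and the membership $d_3(x,y,z)\in\HKer(d_2)$ deduced exactly as in the paper from \eqref{d31}, \eqref{d32}, and Corollary~\ref{HKerd2}. The auxiliary facts you invoke (the formula $d_2\bigl(t^{-1}(\widetilde{a\otimes b})\bigr)=t^{-1}(\underline{a_1})\,t^{-1}(\underline{b_1})\,t(\underline{a_2b_2})$ via the antipode, and the counit relabelling yielding $d_3(x,y,z)\otimes 1$) are precisely what the paper uses, implicitly or explicitly, so the parts you leave as ``mechanical'' are indeed routine verifications rather than gaps.
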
 

\begin{proof}
Identity\,\eqref{d31} is a consequence of the following computation, 
in which the strong cocommutativity of~$H^{[1]}$ is used several times. 
We have
\begin{align*}
d_2\bigl (d_3(x,y,z) \bigr) 
& = d_2(t(\widetilde{y_1\otimes z_1})) \, 
d_2(t(\widetilde{x_1 \otimes y_2 z_2})) \, \\
& \hskip 85pt \times
d_2({t}^{-1}(\widetilde{x_2y_3 \otimes z_3})) \, 
d_2({t}^{-1}(\widetilde{x_3 \otimes y_4})) \\
& = t(\underline{y_1}) \, t(\underline{z_1}) \, {t}^{-1}(\underline{y_2z_2}) \,
t(\underline{x_1}) \, t(\underline{y_3z_3}) \, {t}^{-1}(\underline{x_2y_4z_4})     \\
& \hskip 85pt \times
d_2({t}^{-1}(\widetilde{x_3y_5 \otimes z_5})) \, 
d_2({t}^{-1}(\widetilde{x_4 \otimes y_6})) \\
& = t(\underline{x_1}) \, t(\underline{y_1}) \, 
t(\underline{z_1}) \,  {t}^{-1}(\underline{x_2y_2z_2})  \\
& \hskip 85pt \times
d_2({t}^{-1}(\widetilde{x_3y_3 \otimes z_3})) \, 
d_2({t}^{-1}(\widetilde{x_4 \otimes y_4})) \\
& = t(\underline{x_1}) \, t(\underline{y_1}) \, 
t(\underline{z_1}) \, {t}^{-1}(\underline{x_2y_2z_2}) \,
{t}^{-1}(\underline{x_4y_4}) \,
{t}^{-1}(\underline{z_4}) \, \\
& \hskip 85pt \times
t(\underline{x_3y_3z_3}) \,
d_2({t}^{-1}(\widetilde{x_5 \otimes y_5})) \\
& = t(\underline{x_1}) \, t(\underline{y_1}) \, 
t(\underline{z_1}) \, 
{t}^{-1}(\underline{z_2}) \, {t}^{-1}(\underline{x_2y_2}) \,
d_2({t}^{-1}(\widetilde{x_3 \otimes y_3})) \\
& = \varepsilon (z) \, t(\underline{x_1}) \, t(\underline{y_1}) \, 
{t}^{-1}(\underline{x_2y_2}) \, 
d_2({t}^{-1}(\widetilde{x_3 \otimes y_3})) \\
& = \varepsilon(z) \, d_2(t(\widetilde{x_1 \otimes y_1}) \,
{t}^{-1}(\widetilde{x_2 \otimes y_2})) \\
& = \varepsilon(xyz)\, 1 \, .
\end{align*}
For Identity\,\eqref{d32} , we use the lazy cocommutativity of~$H^{[2]}$. We obtain
\begin{align*}
\Delta \bigl( d_3(x,y,z) \bigr)  & =
t(\widetilde{y_1\otimes z_1}) \, t(\widetilde{x_1\otimes y_3z_3}) \,
{t}^{-1}(\widetilde{x_4y_6 \otimes z_6}) \, {t}^{-1}(\widetilde{x_6\otimes y_8}) \\
& \hskip 45pt \otimes t(\widetilde{y_2\otimes z_2}) \, 
t(\widetilde{x_2\otimes y_4z_4}) \,
{t}^{-1}(\widetilde{x_3y_5 \otimes z_5}) \, {t}^{-1}(\widetilde{x_5\otimes y_7}) \\
& =  t(\widetilde{y_1\otimes z_1}) \, t(\widetilde{x_1\otimes y_2z_2}) \,
{t}^{-1}(\widetilde{x_5y_7\otimes z_6}) \, {t}^{-1}(\widetilde{x_6\otimes y_8}) \\
& \hskip 45pt \otimes t(\widetilde{y_3\otimes z_3}) \,
t(\widetilde{x_2y_4\otimes z_4}) \,
{t}^{-1}(\widetilde{x_3\otimes y_5z_5}) \, {t}^{-1}(\widetilde{x_4\otimes y_6}) \\
& =  t(\widetilde{y_1\otimes z_1}) \, t(\widetilde{x_1\otimes y_2z_2}) \, \\
& \hskip 45pt 
{t}^{-1}(\widetilde{x_3\otimes y_4z_4}) \, {t}^{-1}(\widetilde{x_4\otimes y_5})
\otimes d_3(x_2,y_3,z_3) \, .
\end{align*}
Identity\,\eqref{d33}  follows from\,\eqref{d31} and\,\eqref{d32} . The final assertion follows from Corollary~\ref{HKerd2}.
\end{proof}

It follows from Lemma~\ref{d(x,y,z)} that the ideal~$\B_2^{\ell}(H)$ of 
$\HKer(d_2)$ generated by the elements
$$d_3(x,y,z) - \varepsilon(xyz)1 \quad \text{and} \quad
S \bigl( d_3(x,y,z) \bigr) - \varepsilon(xyz) $$
for all $x,y,z \in H$, is a Hopf ideal in~$\HKer(d_2)$.

\begin{definition} 
The second lazy homology Hopf algebra $\H^{\ell}_2(H)$ of 
a Hopf algebra~$H$ is the quotient Hopf algebra  
\begin{equation*}
\H^{\ell}_2(H)
= \HKer(d_2) / \B^{\ell}_2(H) \, .
\end{equation*}
\end{definition}

Here again we have strongly modeled the definition of~$\H^{\ell}_2(H)$ 
on the simplicial object~$F(\Gamma_*(H))$.

Observe that $\H^{\ell}_2(H)$ is a \emph{commutative} Hopf algebra
since it is the quotient of a Hopf subalgebra of
the commutative Hopf algebra~$F(H^{[2]})$.

When $H$ is cocommutative, we have the following isomorphism
with the Sweedler homology defined in Section~\ref{Sw-homology}.

\begin{proposition}\label{HL2HS2}
For any cocommutative Hopf algebra~$H$ we have
$$\H^{\ell}_2(H) = \H^{\SW}_2(H)  \, .$$
\end{proposition}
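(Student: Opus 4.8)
The plan is to realize both Hopf algebras as explicit quotients of the single Hopf algebra $\HKer(d_2)$ and then to check that the two defining ideals coincide. When $H$ is cocommutative the lazy quotients collapse, $H^{[1]} = H$ and $H^{[2]} = H \otimes H$, so that $F(H^{[1]}) = F(H)$ and $F(H^{[2]}) = F(H^{\otimes 2})$. I first invoke the cocommutative computation carried out in the proof of Proposition~\ref{HS1=HL1}, where it is shown that in this case $d_2$ coincides with Sweedler's differential $d^{\SW}_2$. Since a morphism whose source is a cocommutative Hopf algebra is automatically normal, this immediately gives $\HKer(d_2) = \HKer(d^{\SW}_2)$, so the two homologies are quotients of the \emph{same} Hopf algebra.

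The next step is to match the ``boundaries''. Comparing the defining formula~\eqref{d3} for $d_3(x,y,z)$ with the cocommutative expression~\eqref{d3a} for $d^{\SW}_3$, and using that $\widetilde{x \otimes y} = x \otimes y$ in $H^{[2]} = H \otimes H$, I obtain the key generator-level identity $d_3(x,y,z) = d^{\SW}_3\bigl(t(x \otimes y \otimes z)\bigr)$ for all $x,y,z \in H$. Moreover $d^{\SW}_3$ is a Hopf algebra morphism, being a convolution product of the morphisms $F(\partial_i)$ in the abelian category of commutative cocommutative Hopf algebras; in particular it commutes with the antipode, so that $d^{\SW}_3\bigl(t^{-1}(x\otimes y\otimes z)\bigr) = S\bigl(d_3(x,y,z)\bigr)$.

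I then identify the image. Because $t$ is linear, the Hopf algebra $F(H^{\otimes 3})$ is generated as an algebra by the elements $t(x\otimes y\otimes z)$ and $t^{-1}(x\otimes y\otimes z)$, so its image $\Im(d^{\SW}_3)$ is generated as an algebra by the elements $d_3(x,y,z)$ and $S\bigl(d_3(x,y,z)\bigr)$. By the standard fact that the augmentation ideal of a commutative algebra is generated, as an ideal, by $s - \varepsilon(s)1$ for $s$ ranging over a set of algebra generators, and since $\varepsilon\bigl(d_3(x,y,z)\bigr) = \varepsilon(xyz)$, the augmentation ideal $\Im(d^{\SW}_3)^+$ is generated by the elements $d_3(x,y,z) - \varepsilon(xyz)1$ and $S\bigl(d_3(x,y,z)\bigr) - \varepsilon(xyz)1$. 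Hence the ideal $\Im(d^{\SW}_3)^+ \HKer(d^{\SW}_2)$ defining the Hopf quotient $\HKer(d^{\SW}_2)\quot\Im(d^{\SW}_3)$ is generated by exactly the same elements as the Hopf ideal $\B^{\ell}_2(H)$.

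Finally, since $\H^{\SW}_2(H)$ is by definition the degree-$2$ homology of the complex~\eqref{complex} in the abelian category of commutative cocommutative Hopf algebras, i.e. the Hopf quotient $\HKer(d^{\SW}_2)\quot \Im(d^{\SW}_3)$, the previous step yields $\H^{\SW}_2(H) = \HKer(d_2)/\B^{\ell}_2(H) = \H^{\ell}_2(H)$, as desired. The only point requiring genuine care, and the main (mild) obstacle, is the passage between the abstract abelian-category image $\Im(d^{\SW}_3)$ and its concrete algebra generators, that is, verifying that the ideal generated by the augmentation ideal of the image really matches the explicitly presented ideal $\B^{\ell}_2(H)$; once the identity $d_3(x,y,z) = d^{\SW}_3\bigl(t(x\otimes y\otimes z)\bigr)$ is established, this is routine.
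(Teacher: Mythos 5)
Your proof is correct and takes essentially the same approach as the paper: the paper's proof likewise observes that $F(H^{[2]}) = F(H\otimes H)$ and $d_2 = d^{\SW}_2$ (citing the proof of Proposition~\ref{HS1=HL1}), identifies $d_3(x,y,z) = d^{\SW}_3\bigl(t(x\otimes y\otimes z)\bigr)$ by comparing \eqref{d3} with \eqref{d3a}, and then states that the conclusion follows. Your explicit matching of $\B^{\ell}_2(H)$ with the ideal $\Im(d^{\SW}_3)^+\,\HKer(d^{\SW}_2)$ via algebra generators is exactly the routine verification the paper leaves implicit.
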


\begin{proof}
As observed in the proof of Proposition~\ref{HS1=HL1},
$F(H^{[2]}) = F(H\otimes H)$ and $d_2 = d^{\SW}_2$.
The map~$d_3$ coincides with the differential~$d^{\SW}_3$ of Lemma~\ref{d2-cocomm}
in view of~\eqref{d3} and~\eqref{d3a}. The conclusion follows.
\end{proof}

When $H=k[G]$ is a group algebra, Propositions~\ref{HSkG}
and~\ref{HL2HS2} imply that
\begin{equation*}
\H^{\ell}_2(k[G]) \cong \H^{\SW}_2(k[G]) \cong k[H_2(G,\ZZ)] \, .
\end{equation*}

\subsection{A universal coefficient theorem}\label{UCT}

We fix a commutative algebra~$R$.
Our next aim is to relate the lazy cohomology group $\H_{\ell}^2(H,R)$ 
of Definition~\ref{lazycoh2} to the above-defined Hopf algebra~$\H^{\ell}_2(H)$.

Recall from~\eqref{ZL2} the group~$Z_{\ell}^2(H,R)$ of 
normalized lazy $2$-cocycles with coefficients in~$R$.

\begin{proposition}\label{kappa}
There is a homomorphism
\begin{equation*}
Z_{\ell}^2(H,R)  \to \Alg(\H^{\ell}_2(H),R) \, , \;
\sigma  \mapsto \widehat{\sigma}
\end{equation*}
defined by 
$\widehat{\sigma}([t(\widetilde{x\otimes y})]) = \sigma(x \otimes y)$
for all $x,y \in H$.
It induces a homomorphism 
$$\kappa : \H_{\ell}^2(H,R) \to \Alg(\H^{\ell}_2(H),R)\, .$$
\end{proposition}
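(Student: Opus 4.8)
The plan is to descend to $\H^{\ell}_2(H)$ the algebra morphism that Proposition~\ref{H2motivation} already attaches to a lazy cocycle. Let $\sigma \in Z^2_{\ell}(H,R)$. Since $Z^2_{\ell}(H,R) \subseteq \reg_{\ell}^2(H,R)$, Proposition~\ref{H2motivation} provides a unique algebra morphism $\tilde{\underline{\sigma}} : F(H^{[2]}) \to R$ with $\tilde{\underline{\sigma}}\bigl(t(\widetilde{x \otimes y})\bigr) = \sigma(x \otimes y)$ and $\tilde{\underline{\sigma}}\bigl(t^{-1}(\widetilde{x \otimes y})\bigr) = \sigma^{-1}(x \otimes y)$ for all $x,y \in H$. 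First I would restrict $\tilde{\underline{\sigma}}$ to the Hopf subalgebra $\HKer(d_2) \subseteq F(H^{[2]})$ and then verify that this restriction kills the Hopf ideal $\B^{\ell}_2(H)$; it then factors through the quotient $\H^{\ell}_2(H) = \HKer(d_2)/\B^{\ell}_2(H)$, and the resulting algebra morphism is the desired $\widehat{\sigma}$, given on the generating elements by $\widehat{\sigma}([t(\widetilde{x \otimes y})]) = \sigma(x \otimes y)$ as in the statement.

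The heart of the matter is to show that $\tilde{\underline{\sigma}}$ annihilates the two families of generators of $\B^{\ell}_2(H)$, that is,
\[
\tilde{\underline{\sigma}}\bigl(d_3(x,y,z)\bigr) = \varepsilon(xyz)\, 1_R
\qquad\text{and}\qquad
\tilde{\underline{\sigma}}\bigl(S(d_3(x,y,z))\bigr) = \varepsilon(xyz)\, 1_R
\]
for all $x,y,z \in H$. Applying the algebra morphism $\tilde{\underline{\sigma}}$ to the definition~\eqref{d3} of $d_3(x,y,z)$ turns the first identity into
\[
\sigma(y_1 \otimes z_1)\, \sigma(x_1 \otimes y_2 z_2)\, \sigma^{-1}(x_2 y_3 \otimes z_3)\, \sigma^{-1}(x_3 \otimes y_4) = \varepsilon(xyz)\, 1_R .
\]
I would prove this by applying the left $2$-cocycle identity~\eqref{2cocycle} once, so as to replace the first two factors $\sigma(y_1 \otimes z_1)\, \sigma(x_1 \otimes y_2 z_2)$ by $\sigma(x_1 \otimes y_1)\, \sigma(x_2 y_2 \otimes z_1)$, and then collapsing the two resulting adjacent pairs $\sigma \cdot \sigma^{-1}$ by means of the convolution relation $\sigma * \sigma^{-1} = \eta\varepsilon$ in $\Reg(H \otimes H,R)$ together with coassociativity (here one uses that $\Delta$ is an algebra morphism, so that products such as $x_2 y_2 \otimes x_3 y_3$ do form a genuine coproduct). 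This Heyneman--Sweedler bookkeeping is the step I expect to be the main obstacle; everything else is formal. The second identity then follows from the first: as $F(H^{[2]})$ is commutative, $\tilde{\underline{\sigma}} \circ S$ is again an algebra morphism sending $t(\widetilde{x \otimes y})$ to $\sigma^{-1}(x \otimes y)$, hence by the uniqueness in Proposition~\ref{H2motivation} it coincides with $\tilde{\underline{\sigma^{-1}}}$, where $\sigma^{-1} \in Z^2_{\ell}(H,R)$ is the convolution inverse of $\sigma$ (recall that $Z^2_{\ell}(H,R)$ is a group). Thus $\tilde{\underline{\sigma}}\bigl(S(d_3(x,y,z))\bigr) = \tilde{\underline{\sigma^{-1}}}\bigl(d_3(x,y,z)\bigr) = \varepsilon(xyz)\, 1_R$ by the first identity applied to $\sigma^{-1}$.

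Next I would check that $\sigma \mapsto \widehat{\sigma}$ is a homomorphism of groups. By Proposition~\ref{H2motivation} the assignment $\sigma \mapsto \tilde{\underline{\sigma}}$ is an isomorphism of groups for the convolution products, so $\tilde{\underline{\sigma * \tau}} = \tilde{\underline{\sigma}} * \tilde{\underline{\tau}}$. Because $\HKer(d_2)$ is a subcoalgebra of $F(H^{[2]})$ and $\B^{\ell}_2(H)$ a Hopf ideal, both the restriction to $\HKer(d_2)$ and the passage to the quotient $\H^{\ell}_2(H)$ are compatible with convolution (they are instances of the functoriality of $\Alg(-,R)$ recalled in Section~\ref{exact-seq}, the quotient map being injective on characters by Lemma~\ref{inject}); hence $\widehat{\sigma * \tau} = \widehat{\sigma} * \widehat{\tau}$ in the group $\Alg(\H^{\ell}_2(H),R)$.

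Finally, to obtain the induced map $\kappa$ on $\H_{\ell}^2(H,R) = Z^2_{\ell}(H,R)/B_{\ell}^2(H,R)$, I would show that every coboundary becomes trivial, i.e. $\widehat{\partial \mu} = \eta\varepsilon$ for all $\mu \in \Reg^1_{\ell}(H,R)$. Being lazy, such a $\mu$ yields by Propositions~\ref{Alg-Reg} and~\ref{C1motivation} an algebra morphism $\tilde{\underline{\mu}} : F(H^{[1]}) \to R$ with $\tilde{\underline{\mu}}\bigl(t(\underline{x})\bigr) = \mu(x)$; comparing the coboundary formula~\eqref{coboundary} with the expression~\eqref{d2c} for $d_2$ shows that $\tilde{\underline{\partial \mu}} = \tilde{\underline{\mu}} \circ d_2$ as algebra morphisms on $F(H^{[2]})$. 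Since every $a \in \HKer(d_2)$ satisfies $d_2(a) = \varepsilon(a)\, 1$ — apply $\varepsilon \otimes \id$ to the defining identity of Corollary~\ref{HKerd2} — one gets $\widehat{\partial \mu}([a]) = \tilde{\underline{\mu}}\bigl(\varepsilon(a)\, 1\bigr) = \varepsilon(a)\, 1_R$, so that $\widehat{\partial \mu} = \eta\varepsilon$ is the neutral element. Hence $\sigma \mapsto \widehat{\sigma}$ kills $B_{\ell}^2(H,R)$ and descends to the announced homomorphism $\kappa : \H_{\ell}^2(H,R) \to \Alg(\H^{\ell}_2(H),R)$.
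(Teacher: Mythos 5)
Your proposal is correct and follows essentially the same route as the paper: you descend the algebra morphism $\tilde{\underline{\sigma}}$ of Proposition~\ref{H2motivation} to $\H^{\ell}_2(H)$ by checking it kills the generators of $\B^{\ell}_2(H)$ (via one application of the cocycle identity~\eqref{2cocycle} followed by convolution cancellation, which is exactly the paper's computation), and you kill coboundaries via the identity $\widetilde{\underline{\partial(\mu)}} = \tilde{\underline{\mu}} \circ d_2$ together with $d_2(a) = \varepsilon(a)\,1$ on $\HKer(d_2)$, again as in the paper. The only divergence is your treatment of the generators $S(d_3(x,y,z))$: where the paper performs a second direct computation using~\eqref{lazy2} and~\eqref{2cocycle}, you note that commutativity of $F(H^{[2]})$ makes $\tilde{\underline{\sigma}} \circ S$ an algebra morphism, hence equal to $\tilde{\underline{\sigma^{-1}}}$ by the uniqueness in Proposition~\ref{H2motivation}, reducing the second identity to the first applied to $\sigma^{-1} \in Z^2_{\ell}(H,R)$ --- a valid and slightly cleaner shortcut.
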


\begin{proof}
Let $\sigma \in Z^2_{\ell}(H,R)$. Since $\sigma \in \reg_{\ell}^2(H,R)$, 
Proposition~\ref{H2motivation} furnishes an element 
$\tilde{\underline{\sigma}} \in \Alg(F(H^{[2]}),R)$
such that for all $x,y \in H$,
$$\tilde{\underline{\sigma}} \bigl( t(\widetilde{x\otimes y}) \bigr)
= \sigma(x \otimes y)
\quad\text{and}\quad
\tilde{\underline{\sigma}} \bigl( t^{-1}(\widetilde{x\otimes y}) \bigr)
= \sigma^{-1}(x \otimes y) \, ,$$
where $\sigma^{-1}$ is the convolution inverse of~$\sigma$.
Restricting~$\tilde{\underline{\sigma}}$ to~$\HKer(d_2)$,
we obtain
\begin{align*}
\tilde{\underline{\sigma}} \bigl( d_3(x, y, z) \bigr)
& = \sigma(y_1 \otimes z_1) \, \sigma(x_1 \otimes y_2z_2) \, 
\sigma^{-1}(x_2y_3 \otimes z_3) \, \sigma^{-1}(x_3 \otimes y_4) \\
& = \sigma(x_1 \otimes y_1) \, \sigma(x_2y_2 \otimes z_1) \, 
\sigma^{-1}(x_3y_3 \otimes z_2) \, \sigma^{-1}(x_4 \otimes y_4) \\
& = \sigma(x_1 \otimes y_1) \, \varepsilon(x_2y_2z) \, \sigma^{-1}(x_3 \otimes y_3) \\
& = \varepsilon(z) \, \sigma(x_1 \otimes y_1) \, \sigma^{-1}(x_2 \otimes y_2) \\
& = \varepsilon(xyz) \, ,
\end{align*}
for all $x,y,z \in H$.
The second equality above is a consequence of~\eqref{2cocycle}.
A~similar computation using the fact that $\sigma$ satisfies~\eqref{lazy2} 
and~\eqref{2cocycle} shows that
$$\tilde{\underline{\sigma}}\bigl(S(d_3(x, y, z))\bigr) = \varepsilon(xyz)$$
for all $x,y,z \in H$.
Therefore, $\tilde{\underline{\sigma}}$ vanishes on the ideal~$\B^{\ell}_2(H)$;
hence, it induces the desired algebra morphism, 
which we denote by~$\widehat{\sigma}$.
One checks that the map $\sigma \mapsto \widehat{\sigma}$ defines a homomorphism
$$Z^2_{\ell}(H,R) \rightarrow \Alg(\H^{\ell}_2(H),R) \, .$$

Let $\mu \in \Reg^1_{\ell}(H,R)$, so that $\partial(\mu) \in Z^2_{\ell}(H,R)$. 
By Propositions~\ref{Alg-Reg} and~\ref{C1motivation}, $\mu$~induces the algebra morphism 
$\tilde{\underline{\mu}} : F(H^{[1]}) \to R$ given by
$\tilde{\underline{\mu}}(t(\underline{x})) = \mu(x)$ 
and $\tilde{\underline{\mu}}(t^{-1}(\underline{x})) = \mu^{-1}(x)$ for all $x\in H$.
We claim that
$$\widetilde{\underline{\partial(\mu)}} = \tilde{\underline{\mu}} \circ d_2 \, .$$
Indeed, by~\eqref{coboundary} and~\eqref{d2c}, for all $x,y \in H$ we obtain
\begin{align*}
\widetilde{\underline{\partial(\mu)}}\bigl( t(\widetilde{x \otimes y} )\bigr)
& = \partial(\mu)(x\otimes y)
= \mu(x_1) \, \mu(y_1) \, \mu^{-1}(x_2y_2) \\
& = \tilde{\underline{\mu}}(t(\underline{x_1})) \, \tilde{\underline{\mu}}(t(\underline{y_1})) \, 
\tilde{\underline{\mu}}({t}^{-1}(\underline{x_2y_2})) \\
& = \tilde{\underline{\mu}} \bigl( d_2(t(\widetilde{x \otimes y})) \bigr) \, .
\end{align*}
By a similar computation,
$\widetilde{\underline{\partial(\mu)}}\bigl( t^{-1}(\widetilde{x \otimes y} )\bigr)
= \tilde{\underline{\mu}} \bigl( d_2(t^{-1}(\widetilde{x \otimes y})) \bigr)$
for all $x,y \in H$. This proves the claim.

If $a \in \HKer(d_2)$, then $d_2(a) =\varepsilon(a)\, 1$, 
so that $\widetilde{\underline{\partial(\mu)}} = \varepsilon$;
hence, we obtain the desired homomorphism
$\H_{\ell}^2(H,R) \to \Alg(\H^{\ell}_2(H),R)$.
\end{proof}

For any $f\in \Alg(\Im(d_2),R)$, we set
\begin{equation}\label{def-delta}
(\delta f)(x \otimes y) = f \Bigl(d_2\bigl(t(\widetilde{x \otimes y}) \, t^{-1}(\widetilde{1 \otimes 1}) \bigr)\Bigr)
= f \bigl(t(\underline{x_1}) \, 
t(\underline{y_1}) \, t^{-1}(\underline{x_2y_2}) \, t^{-1}(\underline{1})\bigr)
\end{equation}
for all $x, y \in H$.
This defines an element of~$\Hom(H\otimes H,R)$.

\begin{lemma}\label{delta}
(a) For any $f\in \Alg(\Im(d_2),R)$, the map $\delta f \in \Hom(H\otimes H,R)$ 
is a lazy $2$-cocycle.

(b) If $f, f'\in \Alg(\Im(d_2),R)$, then
$\delta (f *f') = (\delta f)* (\delta f')$.

(c) For any $g\in \Alg(F(H^{[1]}),R)$, we have
$$\delta (\iota^*(g)) = \partial (\check{g})\, ,$$
where $\check{g} \in \Reg^1_{\ell}(H,R)$ is defined by
$\check{g}(x) = g(t(\underline{x})\, t^{-1}(\underline{1}))$ ($x\in H$).
\end{lemma}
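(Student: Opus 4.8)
The plan is to handle the three parts in the order (a), (b), (c), deriving the hard part~(a) from the structural relation $d_2\circ d_3=\varepsilon$ of Lemma~\ref{d(x,y,z)}, and then obtaining (b) and (c) by direct unwinding. For part~(a) I would first corestrict $d_2$ to its image, so that $f\circ d_2$ becomes an element of $\Alg(F(H^{[2]}),R)$. By Proposition~\ref{H2motivation} this algebra morphism corresponds to a \emph{lazy} element $\tau\in\reg_{\ell}^2(H,R)$ with $\tau(x\otimes y)=f\bigl(d_2(t(\widetilde{x\otimes y}))\bigr)$ and convolution inverse $\tau^{-1}(x\otimes y)=f\bigl(d_2(t^{-1}(\widetilde{x\otimes y}))\bigr)$. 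Since $\widetilde{1\otimes 1}$ is grouplike, one checks that $d_2\bigl(t^{-1}(\widetilde{1\otimes 1})\bigr)=t^{-1}(\underline 1)$, so that $\delta f=c^{-1}\tau$ where $c=f(t(\underline 1))$ is a central unit of~$R$. Multiplying a lazy, convolution-invertible element by a central scalar preserves both laziness and invertibility, and using $d_2\bigl(t(\widetilde{x\otimes 1})\bigr)=\varepsilon(x)\,t(\underline 1)=d_2\bigl(t(\widetilde{1\otimes x})\bigr)$ one sees that $\delta f$ is normalized. Thus $\delta f\in\Reg^2_{\ell}(H,R)$, and it only remains to establish the $2$-cocycle identity~\eqref{2cocycle}.

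The heart of the argument, and the step I expect to be the main obstacle, is to extract~\eqref{2cocycle} from Identity~\eqref{d31}. Applying the algebra morphism $f\circ d_2$ to the definition~\eqref{d3} of $d_3(x,y,z)$ and invoking $d_2\bigl(d_3(x,y,z)\bigr)=\varepsilon(xyz)\,1$, I obtain
\[
\tau(y_1\otimes z_1)\,\tau(x_1\otimes y_2z_2)\,\tau^{-1}(x_2y_3\otimes z_3)\,\tau^{-1}(x_3\otimes y_4)=\varepsilon(xyz)\,1
\]
for all $x,y,z\in H$. Whereas Proposition~\ref{kappa} used the cocycle condition to deduce this identity, here it is handed to us by~\eqref{d31} and the implication must be run backwards. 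Interpreting the left-hand side as a four-fold convolution product in the group $\Reg(H^{\otimes 3},R)$, with factors $A\colon(x,y,z)\mapsto\tau(y\otimes z)$, $B\colon\tau(x\otimes yz)$, and the convolution inverses of $C\colon\tau(xy\otimes z)$ and $D\colon\tau(x\otimes y)$, the identity reads $A*B*C^{-1}*D^{-1}=\varepsilon\eta$; convolving on the right by $D*C$ then gives $A*B=D*C$, which is precisely~\eqref{2cocycle} for $\tau$, hence for $\delta f$. The care needed here is that convolution on $H^{\otimes 3}$ is noncommutative, so the factors must be kept in order and one must use that convolution inverses in $\Reg$ are two-sided.

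For part~(b) I would exploit that $d_2$ is a Hopf algebra morphism (Lemma~\ref{d2}) and that $F(H^{[2]})$ is commutative. Writing $w=t(\widetilde{x\otimes y})\,t^{-1}(\widetilde{1\otimes 1})$ and using that $\widetilde{1\otimes 1}$ is grouplike, one computes $\Delta(w)=\bigl(t(\widetilde{x_1\otimes y_1})\,t^{-1}(\widetilde{1\otimes 1})\bigr)\otimes\bigl(t(\widetilde{x_2\otimes y_2})\,t^{-1}(\widetilde{1\otimes 1})\bigr)$. Applying $d_2\otimes d_2$, then the definition of the (abelian) convolution product on $\Alg(\Im(d_2),R)$ together with~\eqref{def-delta}, yields $\delta(f*f')(x\otimes y)=(\delta f)(x_1\otimes y_1)\,(\delta f')(x_2\otimes y_2)$, which is exactly $\bigl((\delta f)*(\delta f')\bigr)(x\otimes y)$.

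Finally, part~(c) is a direct unwinding. Since $\iota^*(g)=g|_{\Im(d_2)}$, the definition~\eqref{def-delta} gives $\delta(\iota^*(g))(x\otimes y)=g\bigl(t(\underline{x_1})\,t(\underline{y_1})\,t^{-1}(\underline{x_2y_2})\,t^{-1}(\underline 1)\bigr)$. Setting $\check g(x)=g(t(\underline x)\,t^{-1}(\underline 1))$, whose convolution inverse is $x\mapsto g(t^{-1}(\underline x)\,t(\underline 1))$ (the scalar factors $c^{\mp1}$ cancelling as in~(a)), and using that $g$ is an algebra morphism, I would identify the right-hand side with $\check g(x_1)\,\check g(y_1)\,\check g^{-1}(x_2y_2)$, which is $\partial(\check g)(x\otimes y)$ by~\eqref{coboundary}. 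This part, like~(b), involves no real difficulty once the bookkeeping of~(a) is in place.
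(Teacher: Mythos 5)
Your proof is correct; parts (b) and (c) coincide with the paper's own computations, but your part (a) --- the substantive part --- follows a genuinely different route. The paper proves (a) by four direct verifications inside $F(H^{[1]})$: it writes down an explicit convolution inverse $x\otimes y\mapsto f\bigl(t(\underline{x_1y_1})\,t^{-1}(\underline{x_2})\,t^{-1}(\underline{y_2})\,t(\underline{1})\bigr)$, checks normalization, deduces laziness from the identities $t^{\pm 1}(\underline{x_1})\otimes x_2=t^{\pm 1}(\underline{x_2})\otimes x_1$ of Proposition~\ref{C1coalgebra}, and establishes the cocycle identity \eqref{2cocycle} by expanding both sides and cancelling with \eqref{ttbar}. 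You instead get invertibility and laziness for free from Proposition~\ref{H2motivation}, writing $\delta f=c^{-1}\tau$ with $\tau\in\reg_{\ell}^2(H,R)$ the lazy element corresponding to $f\circ d_2\in\Alg(F(H^{[2]}),R)$ and $c=f(t(\underline{1}))\in R^{\times}$, and you extract \eqref{2cocycle} from the simplicial identity \eqref{d31} of Lemma~\ref{d(x,y,z)}: pushing the definition \eqref{d3} through the algebra morphism $f\circ d_2$ gives $A*B*C^{-1}*D^{-1}=\eps\eta$ in $\Reg(H^{\otimes 3},R)$, and since each factor is $\tau^{\pm 1}$ precomposed with a coalgebra morphism $H^{\otimes 3}\to H^{\otimes 2}$ (so the claimed convolution inverses are genuine two-sided inverses), cancelling on the right by $D$ and then $C$ yields $A*B=D*C$, which is exactly \eqref{2cocycle} for $\tau$, hence for its scalar multiple $\delta f$. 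This is the convolution computation in the proof of Proposition~\ref{kappa} run backwards, and there is no circularity since Lemma~\ref{d(x,y,z)} is proved independently of Lemma~\ref{delta}. What your route buys is conceptual economy: it exhibits Lemma~\ref{delta}(a) and Proposition~\ref{kappa} as the two directions of one equivalence --- the identity $d_2\circ d_3=\eps\eta$ \emph{is} the cocycle condition, transported along the isomorphism of Proposition~\ref{H2motivation}. What it costs is only self-containedness: the strong-cocommutativity computations have not disappeared but were relocated into the (already established) proof of \eqref{d31}, and the explicit formula for $(\delta f)^{-1}$, which the paper's direct argument produces, appears in your version only implicitly as $c\,\tau^{-1}$.
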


\pf
(a) The map $\delta f$ is invertible with respect to the convolution product.
Indeed, it is easy to check that the map
$$x \otimes y \mapsto f \bigl(
t(\underline{x_1y_1}) \, t^{-1}(\underline{x_2}) \, 
t^{-1}(\underline{y_2}) \, t(\underline{1}) \bigr)$$
is an inverse for~$\delta f$.

Let us verify that $\delta f$ is normalized. Indeed, for $x\in H$,
$$(\delta f)(x \otimes 1) 
= f \bigl(t(\underline{x_1}) \, t^{-1}(\underline{x_2}) \, t(\underline{1}) \, t^{-1}(\underline{1})\bigr)
= \eps(x) f(1) = \eps(x)\, 1_R\, .$$
Similarly, $(\delta f)(1 \otimes x) = \eps(x)\, 1_R$.

By Proposition~\ref{C1coalgebra}, 
$t^{\pm 1}(\underline{x_1}) \otimes x_2 = t^{\pm 1}(\underline{x_2}) \otimes x_1$
for all $x\in H$. Therefore,
\begin{eqnarray*}
(\delta f)(x_1 \otimes y_1) \otimes x_2 y_2
& = &
f \bigl(t(\underline{x_1}) \, 
t(\underline{y_1}) \, t^{-1}(\underline{x_2y_2}) \, t^{-1}(\underline{1}) \bigr) \otimes x_3 y_3\\
& = & 
f \bigl(t(\underline{x_1}) \, 
t(\underline{y_1}) \, t^{-1}(\underline{x_3y_3}) \, t^{-1}(\underline{1}) \bigr) \otimes x_2 y_2\\
& = & 
f \bigl(t(\underline{x_2}) \, 
t(\underline{y_2}) \, t^{-1}(\underline{x_3y_3}) \, t^{-1}(\underline{1}) \bigr) \otimes x_1 y_1\\
& = &
(\delta f)(x_2 \otimes y_2) \otimes x_1 y_1 \, .
\end{eqnarray*}
This shows that $\delta f$ is lazy.

Let us finally check that $\delta f$ is a left $2$-cocycle.
On one hand, for all $x,y,z \in H$,
\begin{eqnarray*}
(\delta f) (x_1,y_1) \, \delta f (x_2y_2,z)
& = & f \bigl(t(\underline{x_1}) \, 
t(\underline{y_1}) \, t^{-1}(\underline{x_2y_2}) \, t^{-1}(\underline{1}) \bigr) \\
&& \hskip 45pt \times
f \bigl(t(\underline{x_3y_3}) \, 
t(\underline{z_1}) \, t^{-1}(\underline{x_4y_4z_2}) \, t^{-1}(\underline{1}) \bigr) \\
& = & f \bigl(t(\underline{x_1}) \, 
t(\underline{y_1}) \,  t^{-1}(\underline{x_2y_2}) \, 
t(\underline{x_3y_3}) \\
&& \hskip 45pt \times
t(\underline{z_1}) \, t^{-1}(\underline{x_4y_4z_2}) \, t^{-1}(\underline{1})^2 \bigr) \\
& = & f \bigl(t(\underline{x_1}) \, t(\underline{y_1}) \, 
t(\underline{z_1}) \, t^{-1}(\underline{x_2y_2z_2}) \, t^{-1}(\underline{1})^2 \bigr) \, .
\end{eqnarray*}
On the other hand,
\begin{eqnarray*}
(\delta f) (y_1,z_1) \, \delta f (x,y_2z_2)
& = & 
f\bigl(t(\underline{y_1}) \, 
t(\underline{z_1}) \, t^{-1}(\underline{y_2z_2}) \, t^{-1}(\underline{1}) \bigr)\\
&& \hskip 45pt \times
f \bigl(t(\underline{x_1}) \, 
t(\underline{y_3z_3}) \, t^{-1}(\underline{x_2y_4z_4}) \, t^{-1}(\underline{1}) \bigr) \\
& = & 
f\bigl(t(\underline{x_1}) \, t(\underline{y_1}) \, t(\underline{z_1}) \\
&& \hskip 45pt \times
t^{-1}(\underline{y_2z_2})  \, 
t(\underline{y_3z_3}) \, t^{-1}(\underline{x_2y_4z_4}) \, t^{-1}(\underline{1})^2 \bigr) \\
& = & 
f\bigl(t(\underline{x_1}) \, t(\underline{y_1}) \, 
t(\underline{z_1}) \, t^{-1}(\underline{x_2y_2z_2}) \, t^{-1}(\underline{1})^2 \bigr) \, ,
\end{eqnarray*}
which by the above computation is equal to
$(\delta f) (x_1,y_1) \, (\delta f) (x_2y_2,z)$.

(b) Since $d_2$ is a coalgebra morphism,
\begin{eqnarray*}
\delta (f *f')(x\otimes y)
& = & (ff') \bigl(d_2(t(\widetilde{x \otimes y}) \, t^{-1}(\widetilde{1 \otimes 1}))\bigr) \\
& = & f \bigl(d_2(t(\widetilde{x \otimes y})\, t^{-1}(\widetilde{1 \otimes 1}))_1\bigr)\\
&& \hskip 45pt \times
f' (\bigl(d_2(t(\widetilde{x \otimes y})\, t^{-1}(\widetilde{1 \otimes 1}))_2\bigr) \\
& = & f \bigl(d_2(t(\widetilde{x_1 \otimes y_1}) \, t^{-1}(\widetilde{1 \otimes 1}))\bigr)\\
&& \hskip 45pt \times
f' (\bigl(d_2(t(\widetilde{x_2 \otimes y_2})\, t^{-1}(\widetilde{1 \otimes 1}))\bigr) \\
& = & (\delta f) * (\delta f')(x\otimes y) \, .
\end{eqnarray*}

(c) Since $g$ is an algebra morphism on~$F(H^{[1]})$,
\begin{eqnarray*}
(\delta (\iota^*g))(x\otimes y)
& = & 
g \bigl(t(\underline{x_1}) \, 
t(\underline{y_1}) \, t^{-1}(\underline{x_2y_2}) \, t^{-1}(\underline{1}) \bigr) \\
& = & 
g \bigl(t(\underline{x_1}) \, t^{-1}(\underline{1}) \bigr)\, 
g\bigl( t(\underline{y_1}) \, t^{-1}(\underline{1}) \bigr)\, 
g\bigl( t^{-1}(\underline{x_2y_2}) \, t(\underline{1}) \bigr) \\
& = &
\check{g}(x_1) \, \check{g}(y_1) \, \check{g}^{-1}(x_2y_2) \\
& = &  \partial (\check{g})(x\otimes y)\, .
\end{eqnarray*}
\epf

We now relate the lazy cohomology group~$\H^2_{\ell}(H,R)$
to the group $\Ext^1(H,R)$ introduced in Section~\ref{ext}.

\begin{proposition}\label{ext-h2}
The map $f\in \Alg(\Im(d_2),R) \mapsto \delta f \in \Hom(H\otimes H, R)$
induces an injective group homomorphism
$$\delta_{\#} : \Ext^1(H,R) \to \H^2_{\ell}(H,R)\, .$$
\end{proposition}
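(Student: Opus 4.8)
The plan is to verify that $\delta$ is a homomorphism at the level of the groups involved, deduce that it descends to a well-defined map $\delta_{\#}$ on $\Ext^1(H,R)$, and then prove injectivity by pinning down $\ker\delta$. Since all the Hopf algebras occurring are cocommutative, the groups $\Alg(\Im(d_2),R)$, $\Alg(F(H^{[1]}),R)$ and $\Alg(\H^{\ell}_1(H),R)$ are abelian, and $B^2_{\ell}(H,R)$ is central in $Z^2_{\ell}(H,R)$; this makes all the quotients and products below legitimate.

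First I would record that, by Lemma~\ref{delta}(a) and~(b), the assignment $f\mapsto\delta f$ is a group homomorphism $\delta:\Alg(\Im(d_2),R)\to Z^2_{\ell}(H,R)$. By Lemma~\ref{delta}(c), for $g\in\Alg(F(H^{[1]}),R)$ one has $\delta(\iota^*(g))=\partial(\check g)$ with $\check g\in\Reg^1_{\ell}(H,R)$, so $\delta$ carries $\Im(\iota^*)$ into the subgroup $B^2_{\ell}(H,R)$ of lazy coboundaries. Composing $\delta$ with the projection $Z^2_{\ell}(H,R)\to\H^2_{\ell}(H,R)$ therefore annihilates $\Im(\iota^*)$, and since $\Ext^1(H,R)=\Coker(\iota^*)$ by Definition~\ref{def-ext}, this composite factors through a homomorphism $\delta_{\#}:\Ext^1(H,R)\to\H^2_{\ell}(H,R)$. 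This is the routine part.

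For injectivity, suppose $f\in\Alg(\Im(d_2),R)$ satisfies $\delta_{\#}[f]=0$, that is $\delta f\in B^2_{\ell}(H,R)$. Write $\delta f=\partial(\mu)$ with $\mu\in\Reg^1_{\ell}(H,R)$, recalling that $B^2_{\ell}(H,R)$ is exactly the group of such lazy coboundaries, and lift $\mu$ to $g_0\in\Alg(F(H^{[1]}),R)$ via Propositions~\ref{Alg-Reg} and~\ref{C1motivation}, so that $g_0(t(\underline x))=\mu(x)$. Then Lemma~\ref{delta}(c) gives $\delta(\iota^*(g_0))=\partial(\check{g_0})=\partial(\mu)=\delta f$, because $\check{g_0}=\mu$ (here $\mu(1)=1$). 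Hence $f\cdot\iota^*(g_0)^{-1}\in\ker\delta$, and the whole statement reduces to the inclusion $\ker\delta\subseteq\Im(\iota^*)$: indeed, once this is known, $f\in\Im(\iota^*)$ and $[f]=0$.

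The computation of $\ker\delta$ is the heart of the argument and the step I expect to be the main obstacle. Let $f'\in\ker\delta$ and set $a(x,y)=d_2(t(\widetilde{x\otimes y}))$ and $b(x,y)=d_2(t^{-1}(\widetilde{x\otimes y}))$; since $d_2$ is a Hopf algebra morphism one has $b(x,y)=S(a(x,y))$, and these elements generate $\Im(d_2)$ as an algebra. Unwinding $\delta f'=\eta\varepsilon$ from~\eqref{def-delta} yields $f'(a(x,y))=\varepsilon(xy)\,c$, where $c=f'(t(\underline{1}))\in R^{\times}$ (invertible because $t(\underline 1)$ is grouplike). Applying $f'$ to the image under $d_2$ of the Takeuchi relation~\eqref{ttbar} in $F(H^{[2]})$, namely $a(x_1,y_1)\,b(x_2,y_2)=\varepsilon(xy)\,1$, and using the counit, then forces $f'(b(x,y))=\varepsilon(xy)\,c^{-1}$. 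These are precisely the values taken by $\iota^*(g')$, where $g'\in\Alg(F(H^{[1]}),R)$ corresponds under Proposition~\ref{C1motivation} to the lazy element $x\mapsto c\,\varepsilon(x)$ of $\Reg_{\ell}(H,R)$; as $f'$ and $\iota^*(g')$ are algebra morphisms agreeing on the generators $a(x,y)$ and $b(x,y)$, they coincide, so $f'=\iota^*(g')\in\Im(\iota^*)$. The subtle point to watch is that the elements $a(x,y)$ need not be invertible in $\Im(d_2)$, so the value $f'(b(x,y))$ must genuinely be extracted from the convolution relation~\eqref{ttbar} transported through $d_2$, rather than from any invertibility of $a(x,y)$.
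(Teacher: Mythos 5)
Your proposal is correct, and it reorganizes the injectivity argument in a way that genuinely differs from the paper's. The preliminary part (that $\delta$ lands in $Z^2_{\ell}(H,R)$, is a homomorphism by Lemma~\ref{delta}\,(a)--(b), carries $\Im(\iota^*)$ into $B^2_{\ell}(H,R)$ by Lemma~\ref{delta}\,(c), and hence descends to $\Ext^1(H,R)=\Coker(\iota^*)$) matches the paper exactly. For injectivity, the paper, like you, writes $\delta f=\partial(\mu)$ with $\mu\in\Reg^1_{\ell}(H,R)$ and lifts $\mu$ to $g\in\Alg(F(H^{[1]}),R)$ via Propositions~\ref{Alg-Reg} and~\ref{C1motivation}; but it then compares $f$ with $\iota^*(g)$ directly, showing they agree on the elements $a(x,y)=d_2\bigl(t(\widetilde{x\otimes y})\bigr)$ after cancelling the invertible factor $d_2\bigl(t^{-1}(\widetilde{1\otimes 1})\bigr)$, and from this alone concludes $f\in\Im(\iota^*)$ --- leaving implicit that agreement on the $a(x,y)$ also forces agreement on the remaining algebra generators $b(x,y)=S\bigl(a(x,y)\bigr)$ of $\Im(d_2)$ (which one can obtain from uniqueness of convolution inverses on the subcoalgebra spanned by the $a(x,y)$). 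You instead translate by $\iota^*(g_0)$ to reduce the whole statement to $\Ker(\delta)\subseteq\Im(\iota^*)$, and then determine $\Ker(\delta)$ exactly: it consists of the restrictions of the characters $t(\underline{x})\mapsto c\,\eps(x)$, $c\in R^{\times}$. Your extraction of $f'\bigl(b(x,y)\bigr)=\eps(xy)\,c^{-1}$ from relation~\eqref{ttbar} pushed through $d_2$ is precisely the step the paper glosses over, and your closing caveat about the non-invertibility of the $a(x,y)$ in $\Im(d_2)$ is well placed. What each route buys: the paper's is shorter and more direct; yours yields the sharper byproduct of an explicit description of $\Ker(\delta)$ and an airtight verification on \emph{all} algebra generators of $\Im(d_2)$, at the cost of one extra translation step. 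One point of convention: you read $B^2_{\ell}(H,R)$ as $\partial\bigl(\Reg^1_{\ell}(H,R)\bigr)$, whereas the text in Section~\ref{lazycohom} literally takes the image of $\partial$ on all of $\Reg^1(H,R)$; since the paper's own proofs of Propositions~\ref{kappa} and~\ref{ext-h2} use lazy $\mu$ exactly as you do, your reading agrees with the paper's actual usage.
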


\pf
It follows from the definitions and Lemma~\ref{delta}
that $\delta$ induces the desired group homomorphism.
Let us check that $\delta_{\#}$ is injective. 
Let $f\in \Alg(\Im(d_2),R)$ be such that 
$\delta f = \partial(\mu)$ for some $\mu \in \Reg^1_{\ell}(H,R)$.
This means that for all $x,y \in H$,
$$(\delta f)(x\otimes y) = \mu(x_1) \, \mu(y_1) \, \mu^{-1}(x_2y_2)
= \mu(x_1) \, \mu(y_1) \, \mu^{-1}(x_2y_2) \, \mu^{-1}(1)$$
since $\mu$ is normalized.
By Propositions~\ref{Alg-Reg} and~\ref{C1motivation} there is a unique 
morphism $g\in \Alg(F(H^{[1]}),R)$ such that
$g(t(\underline{x})) = \mu(x)$ and $g(t^{-1}(\underline{x})) = \mu^{-1}(x)$
for all~$x\in H$.
Restricting~$g$ to~$\Im(d_2)$, we obtain
\begin{eqnarray*}
g\bigl(d_2(t(\widetilde{x \otimes y}) \, t^{-1}(\widetilde{1 \otimes 1}))\bigr) & = &
g\bigl( t(\underline{x_1}) \, t(\underline{y_1}) \, t^{-1}(\underline{x_2y_2}) \, t^{-1}(\underline{1}) \bigr) \\
& = & g(t(\underline{x_1})) \, g(t(\underline{x_2})) \, g(t^{-1}(\underline{x_2y_2}))  \, g(t^{-1}(\underline{1}))\\
& = & \mu(x_1) \, \mu(y_1) \, \mu^{-1}(x_2y_2) \, \mu^{-1}(1) \\
& = & (\delta f)(x\otimes y)  \\
& = & f\bigl(d_2(t(\widetilde{x \otimes y})\, t^{-1}(\widetilde{1 \otimes 1}))\bigr)\, .
\end{eqnarray*}
Since $d_2(t^{-1}(\widetilde{1 \otimes 1}))$ is invertible 
(with inverse $d_2(t(\widetilde{1 \otimes 1}))$), for all $x,y \in H$, we obtain
$$g\bigl(d_2(t(\widetilde{x \otimes y}))\bigr) = 
f\bigl(d_2(t(\widetilde{x \otimes y}))\bigr)\, .$$
This shows that $f$ is in the image of 
$\iota^* : \Alg(F(H^{[1]}),R) \to \Alg(\Im(d_2),R)$,
hence is zero in~$\Ext^1(H,R)$.
\epf

Now we combine the group homomorphisms of Propositions~\ref{kappa} and~\ref{ext-h2}:
\begin{equation*}
\Ext^1(H,R) \overset{\delta_{\#}}{\longrightarrow} \H^2_{\ell}(H,R)
\overset{\kappa}{\longrightarrow} \Alg(\H_2^{\ell}(H),R) \, .
\end{equation*}
We already know that the left arrow $\delta_{\#}$ is injective.

We can now state a universal coefficient theorem for~$\H^2_{\ell}(H,R)$.

\begin{theorem}\label{UCT2}
For any Hopf algebra~$H$ and any commutative algebra~$R$,
the sequence of groups
\begin{equation*}
1 \longrightarrow \Ext^1(H,R) \overset{\delta_{\#}}{\longrightarrow} \H^2_{\ell}(H,R)
\overset{\kappa}{\longrightarrow} \Alg(\H_2^{\ell}(H),R) 
\end{equation*}
is exact.
If in addition the ground field $k$ is algebraically closed, then
\begin{equation*}
\H^2_{\ell}(H,k) \cong \Alg(\H_2^{\ell}(H),k) \, .
\end{equation*}
\end{theorem}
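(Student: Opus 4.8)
The plan is to establish the exactness of the sequence
\begin{equation*}
1 \longrightarrow \Ext^1(H,R) \overset{\delta_{\#}}{\longrightarrow} \H^2_{\ell}(H,R)
\overset{\kappa}{\longrightarrow} \Alg(\H_2^{\ell}(H),R)
\end{equation*}
at the three positions, and then to deduce the stated isomorphism when $k$ is algebraically closed by invoking Proposition~\ref{Ext-zero}. Exactness at the left (injectivity of $\delta_{\#}$) is already provided by Proposition~\ref{ext-h2}, so the work consists in proving exactness at the middle term~$\H^2_{\ell}(H,R)$, that is $\Im(\delta_{\#}) = \Ker(\kappa)$.

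First I would prove the inclusion $\Im(\delta_{\#}) \subseteq \Ker(\kappa)$, i.e. $\kappa \circ \delta_{\#} = 1$. By construction, an element in the image of~$\delta_{\#}$ is the class of a cocycle of the form $\delta f$ for some $f \in \Alg(\Im(d_2),R)$. Tracing through the definitions, $\kappa([\delta f])$ is the algebra morphism on~$\H^{\ell}_2(H) = \HKer(d_2)/\B_2^{\ell}(H)$ induced by~$\widetilde{\underline{\delta f}} \in \Alg(F(H^{[2]}),R)$. The key observation is that $\widetilde{\underline{\delta f}}$ factors as $f \circ d_2$ composed with the correcting unit term~$t^{-1}(\widetilde{1\otimes 1})$ appearing in~\eqref{def-delta}; restricted to~$\HKer(d_2)$, where $d_2(a) = \varepsilon(a)\,1$, this forces $\widehat{\delta f}$ to be trivial. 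So I would show by a direct computation, using the defining formula~\eqref{def-delta} for~$\delta f$ together with $d_2(t^{-1}(\widetilde{1\otimes 1})) = t^{-1}(\underline{1})$, that $\widehat{\delta f}$ equals the counit on~$\HKer(d_2)$, hence is trivial in $\Alg(\H^{\ell}_2(H),R)$.

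The harder inclusion, and the main obstacle, is $\Ker(\kappa) \subseteq \Im(\delta_{\#})$. Here I would start with a lazy $2$-cocycle $\sigma \in Z^2_{\ell}(H,R)$ whose class lies in $\Ker(\kappa)$, meaning $\widehat{\sigma} = \varepsilon$ on~$\H^{\ell}_2(H)$, equivalently $\widetilde{\underline{\sigma}}$ restricts to the counit on~$\HKer(d_2)$. The goal is to produce $f \in \Alg(\Im(d_2),R)$ with $\delta f$ cohomologous to~$\sigma$. The natural candidate is the restriction $f = \widetilde{\underline{\sigma}}|_{\Im(d_2)}$, but this requires knowing that $\widetilde{\underline{\sigma}}$ is well-defined as an algebra morphism on the \emph{subalgebra}~$\Im(d_2)$ of~$F(H^{[1]})$ — which it is, being the restriction of an algebra morphism on all of~$F(H^{[2]})$ along~$d_2$. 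The crux is then to verify that $\sigma$ and $\delta f$ differ by a coboundary~$\partial(\mu)$, using the hypothesis $\widetilde{\underline{\sigma}}|_{\HKer(d_2)} = \varepsilon$ to control the discrepancy; this is essentially a diagram-chase reflecting that $\HKer(d_2)$ and $\Im(d_2)$ fit into the exact sequence underlying the definition of~$\Ext^1$, and I expect the laziness and $2$-cocycle identities~\eqref{lazy2},~\eqref{2cocycle} to be needed to match the multiplicative structure.

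Finally, the isomorphism statement when $k$ is algebraically closed follows immediately: by Proposition~\ref{Ext-zero} we have $\Ext^1(H,k) = 0$, so the exact sequence with $R = k$ reduces to $\kappa$ being an injective homomorphism $\H^2_{\ell}(H,k) \hookrightarrow \Alg(\H^{\ell}_2(H),k)$; it remains to check surjectivity of~$\kappa$ in this case. For surjectivity I would take an algebra morphism $\H^{\ell}_2(H) \to k$, pull it back along the projection $\HKer(d_2) \to \H^{\ell}_2(H)$ and then extend it to an algebra morphism on all of~$F(H^{[2]})$ — here the faithful flatness and extension results behind Proposition~\ref{Alg-surj} (applicable since all the Hopf algebras in sight are commutative and $k$ is algebraically closed) guarantee that such an extension exists, yielding via Proposition~\ref{H2motivation} a lazy element of~$\reg^2_{\ell}(H,k)$; I would then verify it is a genuine $2$-cocycle precisely because it vanishes appropriately on the $d_3$-relations, so that its class in~$\H^2_{\ell}(H,k)$ maps to the prescribed morphism under~$\kappa$.
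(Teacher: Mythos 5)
Your overall architecture matches the paper's: injectivity of $\delta_{\#}$ from Proposition~\ref{ext-h2}, the inclusion $\Im(\delta_{\#})\subseteq\Ker(\kappa)$ by evaluating on $\HKer(d_2)$, where $d_2(a)=\eps(a)\,1$, and the algebraically closed case from Proposition~\ref{Ext-zero} combined with an extension argument via Proposition~\ref{Alg-surj} and Lemma~\ref{inject}. But there is a genuine gap at the central step $\Ker(\kappa)\subseteq\Im(\delta_{\#})$. You propose $f=\tilde{\underline{\sigma}}|_{\Im(d_2)}$ and justify it as ``the restriction of an algebra morphism on all of $F(H^{[2]})$ along $d_2$''. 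That is not a construction: $\Im(d_2)$ is a subalgebra of $F(H^{[1]})$, not of $F(H^{[2]})$, so what you need is a \emph{factorization} $f\circ d_2=\tilde{\underline{\sigma}}$ through the surjection $d_2\colon F(H^{[2]})\to\Im(d_2)$, and such an $f$ exists only if $\tilde{\underline{\sigma}}$ vanishes on the ordinary kernel ideal $\Ker(d_2)$. Your hypothesis ($\kappa$-triviality of the class of $\sigma$) only gives that $\tilde{\underline{\sigma}}$ restricts to the counit on the \emph{Hopf} kernel $\HKer(d_2)$, and bridging the two is exactly where the content lies: one needs $\Ker(d_2)=\HKer(d_2)^+F(H^{[2]})$, which the paper deduces from the normality of $d_2$ (Corollary~\ref{HKerd2}) and Takeuchi's theorem \cite[Th.~4.3]{ta72} for commutative Hopf algebras. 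This makes the sequence $k\to\HKer(d_2)\to F(H^{[2]})\overset{d_2}{\to}\Im(d_2)\to k$ exact, and Proposition~\ref{Alg-exact} then yields the exact sequence of groups
\begin{equation*}
1 \longrightarrow \Alg(\Im(d_2),R) \overset{d_2^*}{\longrightarrow} \Alg(F(H^{[2]}),R)
\overset{j^*}{\longrightarrow} \Alg(\HKer(d_2),R)
\end{equation*}
from which $f$ is extracted. Your appeal to ``the exact sequence underlying the definition of $\Ext^1$'' names the wrong sequence: that one is $k\to\Im(d_2)\to F(H^{[1]})\to\H^{\ell}_1(H)\to k$, which enters the injectivity of $\delta_{\#}$ (Proposition~\ref{ext-h2}), not the exactness at the middle term. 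Note also that the exactness of~\eqref{exactd2} is precisely what legitimizes applying Proposition~\ref{Alg-surj} to get surjectivity of $j^*$ in your argument for algebraically closed $k$, so the missing input affects that part as well.

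A smaller point: once $f$ with $f\circ d_2=\tilde{\underline{\sigma}}$ is in hand, no coboundary correction $\partial(\mu)$ and no further use of \eqref{lazy2} or \eqref{2cocycle} is needed. Indeed, by~\eqref{def-delta},
\begin{equation*}
(\delta f)(x\otimes y)
= \tilde{\underline{\sigma}}\bigl(t(\widetilde{x\otimes y})\, t^{-1}(\widetilde{1\otimes 1})\bigr)
= \sigma(x\otimes y)\,\sigma^{-1}(1\otimes 1)
= \sigma(x\otimes y)\, ,
\end{equation*}
since $\sigma$ is normalized; thus $\delta f=\sigma$ on the nose and the class of $\sigma$ lies in $\Im(\delta_{\#})$, exactly as in the paper's proof.
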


\pf
Since $\delta_{\#}$ is injective, it suffices to prove that
$\Ker(\kappa) = \Im(\delta_{\#})$.
To this end, consider the sequence of Hopf algebra morphisms
\begin{equation}\label{exactd2}
k \longrightarrow \HKer(d_2) \overset{j}{\longrightarrow} F(H^{[2]})
\overset{d_2}{\longrightarrow} \Im(d_2) \longrightarrow k\, ,
\end{equation}
where $j$ is the inclusion of $\HKer(d_2)$ into~$F(H^{[2]})$.
We claim that this sequence is exact. 
Indeed, $j$ is injective, $d_2$ is surjective, and
\begin{equation*}
\Ker(d_2) = \HKer(d_2)^+ F(H^{[2]})\, .
\end{equation*}
The latter equality is a consequence of~Corollary~\ref{HKerd2} and
of~\cite[Th.~4.3]{ta72} (see also the latter's proof).
By Proposition~\ref{Alg-exact}, the exact sequence~\eqref{exactd2} 
induces the exact sequence of groups
\begin{equation}\label{exact2}
1 \longrightarrow \Alg(\Im(d_2),R) \overset{d_2^*}{\longrightarrow} \Alg(F(H^{[2]}),R)
\overset{j^*}{\longrightarrow} \Alg(\HKer(d_2),R) \, .
\end{equation}
Now consider an element of~$\H^2_{\ell}(H,R)$ and represent it by
an element $\sigma \in Z^2_{\ell}(H,R) \subset \reg_{\ell}^2(H,R)$. 
If its image under~$\kappa$ is trivial, then the corresponding algebra morphism 
$\tilde{\underline{\sigma}} \in \Alg(F(H^{[2]}),R)$ given by
$$\tilde{\underline{\sigma}} \bigl( t(\widetilde{x\otimes y}) \bigr)
= \sigma(x \otimes y)$$
restricts to the trivial element of~$\Alg(\HKer(d_2),R)$.
By exactness of~\eqref{exact2}, there is $f\in \Alg(\Im(d_2),R)$ such that
$\delta f = \tilde{\underline{\sigma}}$.
It follows that the element of~$\H^2_{\ell}(H,R)$ represented by~$\sigma$
is in the image of the homomorphism~$\delta_{\#}$ defined in Proposition~\ref{ext-h2}.

Assume now that $k$ is algebraically closed. 
It follows from the exactness of~\eqref{exactd2} and from Proposition~\ref{Alg-surj}
that the map 
$$j^* : \Alg(F(H^{[2]}),k) \to \Alg(\HKer(d_2),k)$$
of~\eqref{exact2} is surjective. 
Since by Lemma~\ref{inject},
$\Alg(\H_2^{\ell}(H),k)$ embeds into $\Alg(\HKer(d_2),k)$,
the map $\kappa:  \H^2_{\ell}(H,R) \to \Alg(\H_2^{\ell}(H),R)$ is surjective.
The injectivity of~$\kappa$ follows from the first part of the theorem
and the vanishing of~$\Ext^1(H,k)$, which is a consequence of Proposition~\ref{Ext-zero}. 
\epf

\section{Computations for the Sweedler Algebra}\label{Swalgebra}

In this section $k$ will denote a field of characteristic~$\neq 2$.
We compute the lazy homology of Sweedler's four-dimensional Hopf algebra.

Recall that the Sweedler algebra is defined by the following presentation: 
$$H_4 = k\, \langle \, x,g \ | \ g^2=1, \ x^2=0, \ xg = - gx \, \rangle \, .$$
The algebra $H_4$ has $\{1, g, x, y = xg\}$ as a linear basis.
It is a Hopf algebra with coproduct~$\Delta$, counit~$\varepsilon$,
and antipode~$S$ given by
$$\Delta(g) = g \otimes g \, ,  \quad \Delta(x) = 1 \otimes x + x \otimes g\, ,$$
$$\varepsilon(g) = 1\, , \quad \varepsilon(x)=0\, , 
\quad S(g)=g \, , \ S(x) = - y \, .$$

\begin{theorem}\label{Swthm}
The lazy homology Hopf algebras of the Sweedler algebra $H_4$ are given by
$$\H_1^\ell(H_4) \cong k \quad\text{and}\quad 
\H_2^\ell(H_4) \cong k[X] \, ,$$ 
where $X$ is a primitive element.
\end{theorem}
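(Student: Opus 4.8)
The plan is to compute the two homology Hopf algebras separately, in each case exploiting the $\ZZ/2\ZZ$-grading of $H_4$ in which $1,g$ are even and $x,y$ are odd. For $\H^{\ell}_1(H_4)$ I would invoke the alternative description $\H^{\ell}_1(H)\cong (H^{\langle 1\rangle})_{\ab}$ of Proposition~\ref{HL1-prop} and just determine the defining ideal of $H_4^{\langle 1\rangle}$. Writing the relation $\varphi(x_1)\,x_2 - \varphi(x_2)\,x_1$ for the generator $x$ gives $(\varphi(1)-\varphi(g))\,x + \varphi(x)\,(g-1)$; taking $\varphi$ dual to $1$ shows $x$ lies in the ideal, while taking $\varphi$ dual to $x$ shows $g-1$ does. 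Hence $H_4^{\langle 1\rangle}=k$ and $\H^{\ell}_1(H_4)\cong k$.

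For $\H^{\ell}_2(H_4)$ the first step is the $2$-lazy quotient $H_4^{[2]}$. The grading is inherited through~\eqref{H2-def}, so I would test the lazy-cocommutativity identities~\eqref{lazycomm} against the functionals dual to $\{1,g,x,y\}$. A direct check shows that the odd part of $H_4^{[2]}$ vanishes, that the four grouplikes $\widetilde{1\otimes 1},\widetilde{1\otimes g},\widetilde{g\otimes 1},\widetilde{g\otimes g}$ collapse to a single grouplike $G$, and that $\widetilde{x\otimes x},\widetilde{x\otimes y},\widetilde{y\otimes x},\widetilde{y\otimes y}$ survive, remain linearly independent, and are $(G,G)$-primitive; thus $H_4^{[2]}$ is $5$-dimensional. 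Applying $F$ and setting $g_0=t(G)$ and $\eta_{uv}=t(\widetilde{u\otimes v})\,g_0^{-1}$, the four elements $\eta_{uv}$ become genuine primitives and $F(H_4^{[2]})\cong k[g_0^{\pm1}][\eta_{xx},\eta_{xy},\eta_{yx},\eta_{yy}]$. Since $H_4^{[1]}\cong k$, so that $\underline{x}=\underline{y}=0$ and $F(H_4^{[1]})=k[s^{\pm1}]$ with $s=t(\underline{1})$, formula~\eqref{d2c} gives $d_2(g_0)=s$ and $d_2(\eta_{uv})=0$; Corollary~\ref{HKerd2} then yields $\HKer(d_2)=k[\eta_{xx},\eta_{xy},\eta_{yx},\eta_{yy}]$.

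The remaining and main task is to identify the Hopf ideal $\B^{\ell}_2(H_4)$ generated by the elements $d_3(a,b,c)-\varepsilon(abc)$ of~\eqref{d3}; this is where the real work lies, since each value requires inserting the iterated coproducts of $a,b,c$ into the four-fold product. Two remarks keep the computation finite and linear: summing the evenness conditions on the four tensor arguments shows $d_3(a,b,c)=0$ unless $\deg a+\deg b+\deg c$ is even, and when exactly two entries are odd the two odd letters are forced into a single argument, so every nonzero $d_3(a,b,c)$ is homogeneous of degree one in the $\eta_{uv}$. Evaluating $d_3$ on $(x,x,g)$, $(g,x,x)$ and $(y,y,g)$ produces $\eta_{xy}-\eta_{xx}$, $\eta_{xx}+\eta_{yx}$ and $\eta_{yx}-\eta_{yy}$; the all-even and odd-total triples contribute nothing, the antipodal generators $S(d_3)-\varepsilon$ merely repeat these relations up to sign, and a finite parity bookkeeping over the remaining two-odd triples confirms that they all lie in the span of these three. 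Therefore $\B^{\ell}_2(H_4)=(\eta_{xy}-\eta_{xx},\,\eta_{yx}+\eta_{xx},\,\eta_{yy}+\eta_{xx})$, and the quotient $\HKer(d_2)/\B^{\ell}_2(H_4)$ is the polynomial Hopf algebra $k[X]$ on the single primitive $X=\eta_{xx}$. As a consistency check I would note that, through Theorem~\ref{UCT2} together with $\Ext^1(H_4,k)=0$ (Proposition~\ref{Ext-zero}), this matches $\H^2_{\ell}(H_4)\cong\Alg(k[X],k)\cong(k,+)$, in agreement with the lazy cohomology computed in~\cite{bc}.
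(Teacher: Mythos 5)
Your proposal is correct and follows essentially the same route as the paper: the paper likewise shows that $H_4^{[1]}$ is spanned by the single grouplike $\underline{1}$ (Lemma~\ref{Swlem1}), identifies $H_4^{[2]}$ as the five-dimensional coalgebra on $h_0=\widetilde{1\otimes 1}$ and the four skew-primitives $\widetilde{u\otimes v}$ with $u,v\in\{x,y\}$ (Lemma~\ref{Swlem2}), realizes $\HKer(d_2)$ as the polynomial Hopf algebra on the four primitives $a_i=t^{-1}(\widetilde{1\otimes 1})\,t(\widetilde{u\otimes v})$ --- exactly your $\eta_{uv}$ --- via Lemma~\ref{Swlem3}, and extracts from the explicit values of $d_3$ (Lemma~\ref{Swlem4}) precisely your relations $X_2=X_1$, $X_3=X_4=-X_1$, yielding $k[X]$. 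The only cosmetic differences are that you get $\H^{\ell}_1(H_4)\cong k$ through $(H_4^{\langle 1\rangle})_{\ab}$ (Proposition~\ref{HL1-prop}) instead of the quotient $k[T,T^{-1}]/(T^2-T)$, and that your $\ZZ/2\ZZ$-parity argument replaces the paper's brute-force evaluation of all the $d_3(a,b,c)$ --- a sound shortcut, since the coproduct of $H_4$ preserves parity and the mixed-parity classes in $H_4^{[2]}$ vanish.
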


The rest of the section is devoted to the proof of this theorem.
We start by computing the coalgebra~$H_4^{[1]}$, as defined in Section~\ref{lazy-quot1}.

\begin{lemma}\label{Swlem1}
The coalgebra $H_4^{[1]}$ is one-dimensional, spanned by the grouplike element~$\underline{1}$. 
\end{lemma}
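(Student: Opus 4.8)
The plan is to compute $H_4^{[1]}$ directly from its definition as the quotient of $H_4$ by the subspace spanned by the laziness elements $\varphi(c_1) c_2 - \varphi(c_2) c_1$ for $c \in H_4$ and $\varphi \in H_4^* = \Hom(H_4,k)$. First I would record the coproducts of the four basis elements: from the given Hopf structure one has $\Delta(1) = 1 \otimes 1$, $\Delta(g) = g \otimes g$, $\Delta(x) = 1 \otimes x + x \otimes g$, and computing $\Delta(y) = \Delta(x)\Delta(g)$ yields $\Delta(y) = g \otimes y + y \otimes 1$. Let $\{1^*, g^*, x^*, y^*\}$ denote the basis of $H_4^*$ dual to $\{1,g,x,y\}$.

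Next I would evaluate the laziness elements on each basis vector. Since $1$ and $g$ are grouplike, every such element vanishes for $c = 1$ and $c = g$, so these produce no relations. For $c = x$, the element $\varphi(x_1) x_2 - \varphi(x_2) x_1$ equals $(\varphi(1) - \varphi(g))\, x + \varphi(x)\,(g - 1)$; taking $\varphi = 1^*$ gives the relation $x$, hence $\underline{x} = 0$, and taking $\varphi = x^*$ gives the relation $g - 1$, hence $\underline{g} = \underline{1}$. For $c = y$, the corresponding element equals $(\varphi(g) - \varphi(1))\, y + \varphi(y)\,(1 - g)$; taking $\varphi = 1^*$ gives the relation $-y$, hence $\underline{y} = 0$ (and $\varphi = y^*$ again recovers $\underline{g} = \underline{1}$).

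These computations show that the laziness subspace contains $x$, $y$, and $g - 1$, hence contains the $3$-dimensional subspace they span inside the $4$-dimensional algebra~$H_4$. Therefore the quotient $H_4^{[1]}$ is at most one-dimensional, spanned by the class $\underline{1}$ of $1$. Since by Proposition~\ref{C1coalgebra} the counit descends to $H_4^{[1]}$ with $\varepsilon(\underline{1}) = \varepsilon(1) = 1 \neq 0$, the class $\underline{1}$ is nonzero, so $H_4^{[1]}$ is exactly one-dimensional; and $\underline{1}$ is grouplike because $\Delta(\underline{1}) = \underline{1} \otimes \underline{1}$. I expect no serious obstacle here: the only points requiring care are computing $\Delta(y)$ correctly and choosing the dual functionals that isolate each of the three relations.
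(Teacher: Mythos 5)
Your proof is correct and takes essentially the same route as the paper: the paper's proof simply asserts that an easy computation gives $\underline{1} = \underline{g}$ and $\underline{x} = \underline{y} = 0$, which is exactly the computation you carried out explicitly with the dual basis functionals. Your additional observation that $\underline{1} \neq 0$ because the counit descends to $H_4^{[1]}$ makes explicit a point the paper leaves implicit, and all your calculations (including $\Delta(y) = g \otimes y + y \otimes 1$) check out.
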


\begin{proof} An easy computation shows that $\underline{1} = \underline{g}$ and 
$\underline{x} = \underline{y} = 0$ in~$H_4^{[1]}$. Since $1$ is grouplike, so is~$\underline{1}$.
\end{proof}

It follows from Lemma~\ref{Swlem1} and the definition of~$F(H_4^{[1]})$ that there is a
Hopf algebra isomorphism 
$$f : k[T,T^{-1}] \to F(H_4^{[1]})$$
determined by $f(T^{\pm 1}) =  t^{\pm 1}(\underline{1})$.
By our first definition of~$\H_1^\ell$ (see Section~\ref{HL1a}), the Hopf algebra
$\H_1^\ell(H_4)$ is isomorphic to the quotient of $k[T,T^{-1}]$ by the ideal generated
by $T^2 - T$, which in view of the invertibility of~$T$ is the same as the ideal generated
by $T-1$. We thus obtain the desired isomorphism $\H_1^\ell(H_4) \cong k$.

Let us next determine the Hopf algebra~~$H_4^{[2]}$, as defined in Section~\ref{lazy-quot2}.

\begin{lemma}\label{Swlem2} 
The coalgebra $H_4^{[2]}$ is five-dimensional with basis 
$\{h_0, h_1, h_2$, $h_3, h_4\}$, where
$$h_0 = \widetilde{1 \otimes 1} \, , \;\; h_1 =  \widetilde{x \otimes x} \, , \;\;
h_2 = \widetilde{x \otimes y} \, , \;\;  h_3 = \widetilde{y \otimes x} \, , \;\; 
h_4 = \widetilde{y \otimes y} \, ,$$
and with coproduct
$$\Delta(h_0)  = h_0 \otimes h_0 \quad \text{and} \quad
\Delta(h_i) = h_0 \otimes h_i + h_i \otimes h$$
for $i = 1, 2,3,4$.
\end{lemma}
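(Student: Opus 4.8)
The plan is to compute $H_4^{[2]}$ straight from its definition as the quotient of $H_4\otimes H_4$ by the subspace $W$ spanned by the elements $\varphi(a_2b_2)\, a_1\otimes b_1 - \varphi(a_1b_1)\, a_2\otimes b_2$, where I write $a,b$ for the generic elements of $H_4$ to avoid a clash with the basis vector $x$. First I would record the data needed for the bookkeeping: the coproducts $\Delta(1)=1\otimes 1$, $\Delta(g)=g\otimes g$, $\Delta(x)=1\otimes x+x\otimes g$, together with the computed value $\Delta(y)=\Delta(xg)=g\otimes y+y\otimes 1$, and the full multiplication table of $\{1,g,x,y\}$ — in particular $gx=-y$, $gy=-x$, $xg=y$, $yg=x$, and $x^2=xy=yx=y^2=0$.

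The heart of the argument is a finite computation over the sixteen pairs $(a,b)$ of basis elements. For each pair I would write out the lazy-cocommutativity identity~\eqref{lazycomm},
$$\widetilde{a_1\otimes b_1}\otimes a_2b_2 = \widetilde{a_2\otimes b_2}\otimes a_1b_1 \in H_4^{[2]}\otimes H_4,$$
and read off the relations among the classes $\widetilde{a\otimes b}$ by comparing coefficients of the basis $\{1,g,x,y\}$ in the right-hand tensorand. For instance, the pair $(1,x)$ already forces $\widetilde{1\otimes x}=0$ and $\widetilde{1\otimes g}=\widetilde{1\otimes 1}$. Running through all pairs shows that the eight ``mixed'' classes $\widetilde{1\otimes x}$, $\widetilde{x\otimes 1}$, $\widetilde{1\otimes y}$, $\widetilde{y\otimes 1}$, $\widetilde{g\otimes x}$, $\widetilde{x\otimes g}$, $\widetilde{g\otimes y}$, $\widetilde{y\otimes g}$ vanish, that $\widetilde{1\otimes g}=\widetilde{g\otimes 1}=\widetilde{g\otimes g}=\widetilde{1\otimes 1}=h_0$, and that the four purely odd pairs $(x,x)$, $(x,y)$, $(y,x)$, $(y,y)$ yield identities whose two sides already coincide modulo the relations just obtained.

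From this I would extract the dimension count. The eleven relations above — eight vanishing classes and the three identifications $\widetilde{1\otimes 1}=\widetilde{1\otimes g}=\widetilde{g\otimes 1}=\widetilde{g\otimes g}$ — are linearly independent in $H_4\otimes H_4$, and the check on the four odd pairs shows that nothing outside their span is forced. Hence $W$ is exactly $11$-dimensional, so $\dim H_4^{[2]}=16-11=5$, with basis the images $h_0=\widetilde{1\otimes 1}$, $h_1=\widetilde{x\otimes x}$, $h_2=\widetilde{x\otimes y}$, $h_3=\widetilde{y\otimes x}$, $h_4=\widetilde{y\otimes y}$.

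The coproduct then follows formally from the formula $\Delta(\widetilde{a\otimes b})=\widetilde{a_1\otimes b_1}\otimes\widetilde{a_2\otimes b_2}$ of Proposition~\ref{H2coalgebra}. Since $1$ is grouplike, $\Delta(h_0)=h_0\otimes h_0$; and expanding, say, $\Delta(\widetilde{x\otimes x})$ over the four summands of $\Delta(x)\otimes\Delta(x)$, every cross term carries a vanishing mixed class, leaving $\Delta(h_1)=h_0\otimes h_1+h_1\otimes h_0$, and identically for $h_2,h_3,h_4$ (the counit computation $\varepsilon(h_0)=1$, $\varepsilon(h_i)=0$ comes along for free from $\varepsilon(x)=\varepsilon(y)=0$). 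The only genuine obstacle is the lower bound on the dimension, that is, confirming that the odd pairs impose no new relation on $h_1,\dots,h_4$; this is why I would treat those four pairs explicitly rather than rely on a symmetry shortcut.
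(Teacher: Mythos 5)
Your proposal is correct and follows essentially the same route as the paper: the paper's proof is precisely the ``direct computation'' you carry out, killing the eight mixed classes $\widetilde{1\otimes x},\dots,\widetilde{y\otimes g}$ and identifying $\widetilde{1\otimes g}=\widetilde{g\otimes 1}=\widetilde{g\otimes g}=\widetilde{1\otimes 1}$, with the coproduct formulas then following from Proposition~\ref{H2coalgebra}. Your only additions are to make explicit what the paper leaves implicit --- the linear independence of the eleven relation vectors and the check that the four odd pairs impose nothing new, which settles the lower bound $\dim H_4^{[2]}\geq 5$ --- and you correctly read the paper's $h_i\otimes h$ as the evident typo for $h_i\otimes h_0$.
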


\begin{proof} 
By a direct computation one shows that $H_4^{[2]}$ is obtained from $H_4 \otimes H_4$
by killing the eight elements
$\widetilde{1 \otimes x}$, $\widetilde{1 \otimes y}$, $\widetilde{g \otimes x}$, $\widetilde{g \otimes y}$,
$\widetilde{x \otimes 1}$, $\widetilde{y \otimes 1}$, $\widetilde{x \otimes g}$, $\widetilde{y \otimes g}$,
and adding the relations
$$\widetilde{g \otimes 1} = \widetilde{1 \otimes g} = \widetilde{g \otimes g} = h_0 \, .$$
It follows that $H_4^{[2]}$ is spanned by $\{h_0, h_1, h_2, h_3, h_4\}$.
The formulas for the coproduct follow easily.
\end{proof}

Let $B= k[T,T^{-1}, Y_1,Y_2,Y_3,Y_4]$ be the commutative Hopf algebra
with coproduct determined by
$$\Delta(T) = T \otimes T  \quad \text{and} \quad
\Delta(Y_i) = T \otimes Y_i + Y_i \otimes T$$
for $i = 1, 2,3,4$. Then it follows from Lemma~\ref{Swlem2} 
that there is a Hopf algebra isomorphism $g : B \to F(H_4^{[2]})$ such that
$g(T) = h_0$ and $g(Y_i) = h_i$ for $i = 1, 2,3,4$.

\begin{lemma}\label{Swlem3}
 Let $A= k[X_1,X_2,X_3,X_4]$ be the commutative Hopf algebra
such that $X_1, X_2, X_3, X_4$ are primitive. Then
there is a commutative diagram of Hopf algebras
 $$\xymatrix{
 k \ar[r] & A \ar[r]^{\nu} \ar[d] & B\ar[r]^{\pi} \ar[d]^g
 & k[T, T^{-1}] \ar[r] \ar[d]^f & k \\
 k \ar[r] & {\rm HKer}(d_2)  \ar[r] & F(H_4^{[2]}) \ar[r]^{d_2} & F(H_4^{[1]}) \ar[r] & k}
 $$
 in which the horizontal sequences are exact and the vertical morphisms are isomorphisms.
\end{lemma}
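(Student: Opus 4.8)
The plan is to transport the bottom row onto the already-understood top row by means of the isomorphisms $f$ and $g$, the only genuine computation being the effect of $d_2$ on the algebra generators of $F(H_4^{[2]})$.

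First I would compute $d_2$ on the generators $t(h_0), t(h_1), \dots, t(h_4)$ using formula~\eqref{d2c} together with Lemma~\ref{Swlem1} (so that $\underline{1} = \underline{g}$ is grouplike while $\underline{x} = \underline{y} = 0$ in $H_4^{[1]}$) and the relations $x^2 = y^2 = xy = yx = 0$ in $H_4$. Since $\Delta(x) = 1 \otimes x + x \otimes g$ and $\Delta(y) = g \otimes y + y \otimes 1$, in each summand $t(\underline{a_1})\, t(\underline{b_1})\, t^{-1}(\underline{a_2 b_2})$ arising from $h_i = \widetilde{a \otimes b}$ with $i \geq 1$ the only way to keep both $\underline{a_1}$ and $\underline{b_1}$ nonzero is to send the two ``odd'' tensorands into the factor $t^{-1}(\underline{a_2 b_2})$; their product is then one of $x^2, xy, yx, y^2 = 0$, so every term vanishes. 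Hence $d_2(t(h_i)) = 0$ for $i = 1, \dots, 4$, whereas $d_2(t(h_0)) = t(\underline{1})$. Under the isomorphisms $f$ and $g$ this says exactly that $d_2$ corresponds to the projection $\pi \colon B \to k[T, T^{-1}]$ given by $T \mapsto T$ and $Y_i \mapsto 0$; in particular the right-hand square commutes, $d_2$ is surjective, and $\Im(d_2) = F(H_4^{[1]})$.

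Next I would make the top row explicit. The naive guess $\nu(X_i) = Y_i$ fails because $Y_i$ is not primitive in $B$; instead I would set $Z_i = T^{-1} Y_i$ and check from $\Delta(Y_i) = T \otimes Y_i + Y_i \otimes T$ that each $Z_i$ is primitive. Then $B = k[T^{\pm 1}, Z_1, \dots, Z_4]$ is, as a Hopf algebra, the tensor product $k[T^{\pm 1}] \otimes k[Z_1, \dots, Z_4]$, and $\pi$ is the projection killing the $Z_i$. Defining $\nu \colon A \to B$ by $\nu(X_i) = Z_i$ therefore identifies $A$ with the tensor factor $k[Z_1, \dots, Z_4] = \HKer(\pi)$, so the top row $k \to A \to B \to k[T, T^{-1}] \to k$ is (split) exact; the extra condition in the definition of exactness is automatic because $B$ is commutative.

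Finally, since $g$ is a Hopf algebra isomorphism with $d_2 \circ g = f \circ \pi$ and $f$ an isomorphism, applying $g^{-1} \otimes f^{-1} \otimes g^{-1}$ to the defining identity~\eqref{Hopfkernel} of the Hopf kernel shows $g(\HKer(\pi)) = \HKer(d_2)$, so $g$ restricts to an isomorphism $\HKer(\pi) \to \HKer(d_2)$. Composing with $\nu \colon A \to \HKer(\pi)$ gives the left-hand vertical isomorphism and the commutativity of the left-hand square; concretely it sends $X_i$ to the primitive element $t^{-1}(h_0)\, t(h_i)$, which lies in $\HKer(d_2)$ by Corollary~\ref{HKerd2}. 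The exactness of the bottom row is then obtained either by transporting the top row through these isomorphisms, or directly from the exact sequence~\eqref{exactd2} established in the proof of Theorem~\ref{UCT2}. The one delicate point is the identification $\nu(A) = \HKer(\pi)$: it is what forces the correct generators $Z_i = T^{-1} Y_i$ rather than the $Y_i$, and once these are found the Hopf algebra $B$ trivializes into a tensor product and the diagram essentially writes itself.
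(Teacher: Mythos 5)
Your proof is correct and follows essentially the same route as the paper's: the paper likewise defines $\nu(X_i)=T^{-1}Y_i$ (your $Z_i$), observes that $d_2$ sends $t(h_0)$ to $t(\underline{1})$ and kills the $t(h_i)$ for $i\geq 1$, deduces exactness of the top row from $\nu(A)^+B=\Ker(\pi)$, and obtains the left vertical isomorphism by sending each $X_i$ to $t^{-1}(\widetilde{1\otimes 1})\,t(h_i)\in\HKer(d_2)$. The only difference is that you spell out details the paper leaves implicit, such as the term-by-term vanishing in the computation of $d_2(t(h_i))$ via~\eqref{d2c} and the identification $\HKer(\pi)=k[Z_1,\dots,Z_4]$ via the tensor-product decomposition of~$B$.
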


\begin{proof}
The Hopf algebra morphism $\nu : A \to B$ is defined by
 $\nu(X_i) = T^{-1}Y_i$ for $i= 1,2,3,4$; it is clearly injective. 
The Hopf algebra morphism
 $\pi : B \to k[T,T^{-1}]$ sends $T$ to itself and $Y_i$ to~$0$ ($1 \leq i \leq 4$); 
it is surjective.
It is clear that $\nu(A)^+B={\rm Ker}(\pi)$, which implies that
the top sequence is exact with ${\HKer}(\pi) = \nu(A)$. 
The Hopf algebra morphism $d_2 : F(H_4^{[2]}) \to F(H_4^{[1]})$
sends $t(\widetilde{1 \otimes 1})$ to~$t(\overline{1})$ and the remaining
generators to~$0$. Therefore $d_2$ is surjective and the right square is commutative.
The left vertical map is constructed by sending 
each generator $X_i$ to the appropriate element in ${\HKer}(d_2)$;
for instance, $X_1$ is sent to $t^{-1}(\widetilde{1 \otimes 1})t(\widetilde{x \otimes x})$).
The conclusion follows.  
\end{proof}

By Lemma~\ref{Swlem3} the map $d_2$ is surjective. It thus follows from Proposition~\ref{HS1=HL1}
that
$\H^{\ell}_1(H_4)  \cong F(H_4^{[1]}) \quot \Im(d_2) \cong k$,
which gives another proof of the first isomorphism in Theorem~\ref{Swthm}.

Let us now compute the values of the trilinear map 
$$d_3 : H_4 \times H_4 \times H_4 \to F(H_4^{[2]}) \, .$$

\begin{lemma}\label{Swlem4}
For all $a,b \in H_4$,
\begin{equation}\label{Swlem41}
d_3(a,b,1) = d_3(a,1,b) = d_3(1,a,b) = \varepsilon(ab) \, ,
\end{equation}
\begin{equation}\label{Swlem42}
d_3(a,g,g) = d_3(g,a,g) = d_3(g,g,a) = \varepsilon(a) \, .
\end{equation}
If we set
$$a_1 = t^{-1}(\widetilde{1 \otimes 1})t(\widetilde{x \otimes x})\, , \quad
a_2=t^{-1}(\widetilde{1 \otimes 1})t(\widetilde{x \otimes y}) \, ,$$
$$a_3 = t^{-1}(\widetilde{1 \otimes 1})t(\widetilde{y \otimes x})\ , \quad
a_4=t^{-1}(\widetilde{1 \otimes 1})t(\widetilde{y \otimes y}) \, ,$$
then
$\eps(a_1) = \eps(a_2) = \eps(a_3) = \eps(a_4) = 0$ and
$$d_3(x,x,g) = -a_1+a_2 = - d_3(x,y,g) \, ,$$
$$d_3(y,x,g) = -a_3+a_4 = - d_3(y,y,g) \, ,$$
$$d_3(y,g,x) = -a_1-a_4 =  d_3(x,g,y) \, ,$$
$$d_3(x,g,x) = -a_2-a_3 =  d_3(y,g,y) \, ,$$
$$d_3(g,x,x) = a_1+a_3 = d_3(g,y,x) \, ,$$
$$d_3(g,x,y) = a_2+a_4 = d_3(g,y,y) \, ,$$
$$d_3(x,x,x) = d_3(x,y,x)= d_3(y,x,x)=d_3(y,y,x)=0 \, ,$$
$$d_3(x,x,y) = d_3(y,x,y)=d_3(x,y,y)=d_3(y,y,y)=0 \, .$$
\end{lemma}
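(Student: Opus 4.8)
The plan is to compute each of the values $d_3(a,b,c)$ directly from the defining formula~\eqref{d3}, exploiting the very simple coproducts of the basis elements of $H_4$ to keep the Sweedler notation under control. Recall that
\begin{equation*}
d_3(a,b,c) =
t(\widetilde{b_1\otimes c_1}) \, t(\widetilde{a_1 \otimes b_2 c_2}) \,
{t}^{-1}(\widetilde{a_2b_3 \otimes c_3}) \, {t}^{-1}(\widetilde{a_3 \otimes b_4}) \, .
\end{equation*}
The three arguments range over the basis $\{1,g,x,y\}$, so in principle there are $64$ values to determine; the point of the lemma is that almost all of them are forced to be trivial or to reduce to the four generators $a_1,\dots,a_4$ of $\HKer(d_2)$. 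The coproducts I would use throughout are $\Delta(1)=1\otimes 1$, $\Delta(g)=g\otimes g$, $\Delta(x)=1\otimes x+x\otimes g$, and $\Delta(y)=\Delta(xg)=g\otimes y+ y\otimes g$ (the last computed from $y=xg$), all of which are at most two-term; crucially, $1$ and $g$ are grouplike, so whenever one of the three arguments is $1$ or $g$ the corresponding factors in~\eqref{d3} collapse with no summation.

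\emph{First} I would dispose of the easy cases~\eqref{Swlem41} and~\eqref{Swlem42}. For~\eqref{Swlem41}, when the third argument is $1$, the counit relations built into $H_4^{[2]}$ (see Lemma~\ref{Swlem2}, where $\widetilde{a\otimes 1}=\eps(a)h_0$ up to the identifications listed there) force $t(\widetilde{b_1\otimes 1})$ and the $\widetilde{-\otimes 1}$-type factors to become scalars, and after using~\eqref{ttbar} one is left with $\eps(ab)$; the cases with $1$ in the first or middle slot are handled the same way. For~\eqref{Swlem42} the key observation is that plugging two grouplike $g$'s into~\eqref{d3} and using $\widetilde{g\otimes g}=\widetilde{g\otimes 1}=\widetilde{1\otimes g}=h_0$ collapses everything to $\eps(a)\,1$; alternatively one can appeal to the already-established identity~\eqref{d31}, $d_2(d_3(a,b,c))=\eps(abc)1$, together with the structure of $F(H_4^{[2]})$ to cross-check.

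\emph{Next} comes the substantive part: the sixteen mixed values in which exactly one argument is $g$ and the other two lie in $\{x,y\}$, plus the eight values with no $g$ at all. For a representative case such as $d_3(x,x,g)$, I would expand each factor using $\Delta(x)=1\otimes x+x\otimes g$, with $g$ grouplike so that its iterated coproduct is just $g\otimes\cdots\otimes g$. This yields a sum of a few terms of the form $t(\widetilde{x\otimes x})$, $t(\widetilde{x\otimes y})$ and their $t^{-1}$-partners, which I then normalize by inserting $t^{-1}(\widetilde{1\otimes 1})$ factors (using~\eqref{ttbar}) to recognize the combinations $a_1,a_2,a_3,a_4$; the killed basis elements of Lemma~\ref{Swlem2}, namely $\widetilde{1\otimes x}$, $\widetilde{x\otimes 1}$, $\widetilde{x\otimes g}$, etc., cause most terms to vanish. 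The identities $\eps(a_i)=0$ are immediate from $\eps(\widetilde{x\otimes x})=\eps(x)^2=0$ and~\eqref{ttbar}. The no-$g$ values, e.g.\ $d_3(x,x,x)$, are expected to vanish entirely because every monomial produced has a factor of the form $\widetilde{x\otimes 1}$ or $\widetilde{1\otimes x}$ (or their $y$-analogues) which is zero in $H_4^{[2]}$.

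\emph{The main obstacle} will be the sheer bookkeeping in the mixed cases: $\Delta(x)$ and $\Delta(y)$ each have two terms, so the fourfold product in~\eqref{d3} can generate up to $2^4=16$ terms before simplification, and one must carefully track which of the eight killed tensors of Lemma~\ref{Swlem2} annihilates each term and which of the three identifications $\widetilde{g\otimes 1}=\widetilde{1\otimes g}=\widetilde{g\otimes g}=h_0$ applies. The symmetry pattern asserted in the statement (for instance $d_3(x,x,g)=-d_3(x,y,g)$ and the sign relations among the three ``positions'' of $g$) is a useful consistency check: I would verify one representative in each of the six displayed lines in full and then argue that swapping $x\leftrightarrow y$ in an appropriate slot, which amounts to the substitution $x_2\otimes g\leftrightarrow$ the $y$-branch in $\Delta$, reproduces the remaining values with the predicted signs, rather than recomputing all sixteen from scratch.
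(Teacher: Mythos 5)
Your overall strategy is exactly the paper's: use Lemma~\ref{Swlem2} to get the collapse identities $t(\widetilde{a \otimes 1}) = t(\widetilde{1 \otimes a}) = \eps(a)\, t(\widetilde{1 \otimes 1}) = t(\widetilde{a \otimes g}) = t(\widetilde{g \otimes a})$, deduce \eqref{Swlem41} and~\eqref{Swlem42} from them together with~\eqref{ttbar}, and then evaluate the remaining values of~\eqref{d3} by brute force on the basis $\{1,g,x,y\}$, letting the killed tensors of Lemma~\ref{Swlem2} annihilate most terms. However, there is a concrete error at the foundation of your computation: the coproduct of $y = xg$ is \emph{not} $\Delta(y) = g\otimes y + y\otimes g$. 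Since $\Delta$ is an algebra morphism, $\Delta(y) = \Delta(x)\Delta(g) = (1\otimes x + x\otimes g)(g\otimes g) = g\otimes y + y\otimes g^2 = g\otimes y + y\otimes 1$; that is, $y$ is $(g,1)$-skew-primitive, not $(g,g)$-skew-primitive. The majority of the values in the lemma involve $y$ and are computed from iterates of this coproduct, so the proof as written is unreliable. (It so happens that the identifications $\widetilde{g\otimes 1}=\widetilde{1\otimes g}=\widetilde{g\otimes g}=\widetilde{1\otimes 1}$, the vanishing of all mixed classes in $H_4^{[2]}$, and a parity count --- each of $x,y$ has exactly one ``odd'' slot per term of any iterated coproduct, and products of two odd elements of $H_4$ vanish --- force the spurious trailing $g$'s of your formula never to survive in a slot where they differ from the correct trailing $1$'s, so the final values would coincidentally come out right; but you give no such argument, and a computation resting on a false formula is not a proof.)

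A second, smaller issue: your plan to verify one representative per displayed line and obtain the others ``by swapping $x \leftrightarrow y$ in an appropriate slot'' does not work as a formal symmetry. The substitution $x \mapsto y$ changes the positions of the $g$'s and $1$'s in the iterated coproduct and introduces signs through relations such as $gx = -y$, $gy = -x$, $yg = x$; the stated sign patterns (e.g.\ $d_3(x,y,g) = -d_3(x,x,g)$, which uses $S(a_i) = -a_i$, valid because the $a_i$ are primitive by Lemma~\ref{Swlem3}) emerge only from the computations themselves. The paper likewise prints a single representative, namely $d_3(x,x,g) = t(\widetilde{1 \otimes g})\,t^{-1}(\widetilde{x \otimes x}) + t(\widetilde{x \otimes y})\,t^{-1}(\widetilde{g \otimes g}) = S(a_1) + a_2 = -a_1 + a_2$, and declares the rest ``similar'' --- but there ``similar'' means repeating the brute-force evaluation case by case, not invoking a substitution symmetry. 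With $\Delta(y)$ corrected and the symmetry shortcut replaced by (or rigorously reduced to) case-by-case evaluation, your argument coincides with the paper's proof.
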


\begin{proof}
It follows from Lemma~\ref{Swlem2} that
$$t(\widetilde{a \otimes 1}) = t(\widetilde{1 \otimes a}) = \varepsilon(a)t(\widetilde{1 \otimes 1})
=t(\widetilde{a \otimes g})=t(\widetilde{g \otimes a}) \, .$$
From this we easily deduce~\eqref{Swlem41} and~\eqref{Swlem42}.
The remaining identities are obtained by a brute force computation. For instance,
\begin{align*}
d_3(x,x,g) 
& = t(\widetilde{1 \otimes g})t^{-1}(\widetilde{x \otimes x})
+ t(\widetilde{x \otimes y})t^{-1}(\widetilde{g \otimes g}) \\
& = S(a_1) +a_2 = -a_1+a_2 \, .
\end{align*}
The other computations are similar.
\end{proof}

\begin{proof}[Proof of Theorem~\ref{Swthm}]
By Lemma~\ref{Swlem3} and Lemma~\ref{Swlem4}, the Hopf algebra
$\H_2^\ell(H_4)$ is isomorphic to the quotient of $A = k[X_1,X_2,X_3,X_4]$ by the ideal
generated by the relations $X_2=-X_3=-X_4=X_1$.
It follows that $\H_2^\ell(H_4)$ is isomorphic to the polynomial algebra~$k[X]$. 
\end{proof}

Using the universal coefficient theorems \ref{UCT1} and~\ref{UCT2},
we recover the computation of the lazy cohomology of $H_4$ performed in~\cite[Sect.~2]{bc}.

\section*{Acknowledgements}

The present joint work is part of the project ANR BLAN07-3$_-$183390 
``Groupes quantiques~: techniques galoisiennes et d'int\'egration" funded
by Agence Nationale de la Recherche, France.

\end{document}